\numberwithin{equation}{section}
\theoremstyle{plain}
\newtheorem{thm}[equation]{Theorem}
\newtheorem*{thm*}{Theorem}
\newtheorem{prop}[equation]{Proposition}
\newtheorem{cor}[equation]{Corollary}       
\newtheorem{lem}[equation]{Lemma}
\newtheorem{que}[equation]{Question}
\theoremstyle{definition} 
\newtheorem{defn}[equation]{Definition} 
\newtheorem{ex}[equation]{Example}
\newtheorem{rem}[equation]{Remark}
\newcommand{\Z}{\mathbb{Z}}
\newcommand{\Hom}{\mathrm{Hom}}
\newcommand{\Map}{\mathrm{Map}}
\newcommand{\mrm}[1]{\mathrm{#1}}
\newcommand{\mbb}[1]{\mathbb{#1}}
\renewcommand{\a}{\mathfrak{a}}
\newcommand{\p}{\mathfrak{p}}
\newcommand{\q}{\mathfrak{q}}
\newcommand{\m}{\mathfrak{m}}
\newcommand{\n}{\mathfrak{n}}
\newcommand{\I}{\mathcal{I}}
\newcommand{\Spech}{\mathrm{Spec}^{\mathrm{h}}}
\newcommand{\THH}{\mathrm{THH}}
\newcommand{\Ad}{\mathrm{Ad}}
\renewcommand{\L}{\mathcal{L}}
\newcommand{\res}{\mathrm{res}}
\newcommand{\Sp}{\mathrm{Sp}}
\renewcommand{\mod}[1]{\mathrm{Mod}_{#1}}
\newcommand{\Fun}{\mathrm{Fun}}
\newcommand{\pb}{\p^{\downarrow}}
\newcommand{\C}{\mathscr{C}}
\newcommand{\D}{\mathscr{D}}
\newcommand{\G}{\mathscr{G}}
\newcommand{\doublearrows}{\begin{smallmatrix} \to \\ \to \end{smallmatrix}}
\newcommand{\triplearrows}{\begin{smallmatrix} \to \\ \to \\ 
\to \end{smallmatrix} }
\newcommand{\leftdoublearrows}{\begin{smallmatrix} \leftarrow \\ \leftarrow \end{smallmatrix}}
\newcommand{\lefttriplearrows}{\begin{smallmatrix} \leftarrow \\ \leftarrow \\ 
\leftarrow \end{smallmatrix} }
\begin{document}
\title{Local Gorenstein duality in chromatic group cohomology}
\author{Luca Pol}
\address[Pol]{Fakult\"{a}t f\"{u}r Mathematik, Universit\"{a}t Regensburg, Universit\"{a}tsstra{\ss}e 31, 93053 Regensburg, Germany}
\email{luca.pol@ur.de}
\author{Jordan Williamson}
\address[Williamson]{Department of Algebra, Faculty of Mathematics and Physics, Charles University in Prague, Sokolovsk\'{a} 83, 186 75 Praha, Czech Republic}
\email{williamson@karlin.mff.cuni.cz}
\subjclass[2020]{55P43, 13D45, 55P92}

\begin{abstract}
We consider local Gorenstein duality for cochain spectra $C^*(BG;R)$ on the classifying spaces of compact Lie groups $G$ over complex orientable ring spectra $R$. We show that it holds systematically for a large array of examples of ring spectra $R$, including Lubin-Tate theories, topological $K$-theory, 
and various forms of topological modular forms. We also  
prove a descent result for local Gorenstein duality which allows us to access further examples.
\end{abstract}
\maketitle
\setcounter{tocdepth}{1}
\tableofcontents

\section{Introduction}
Many algebraic constructions and ideas can be understood in a homotopy invariant sense, and as such can be considered in the realm of spectra. 
This passage of techniques and ideas between spectral algebra and homological algebra, has led to a powerful new point of view in recent years, 
in particular with regards to modular representation theory~\cite{BGduality, BIK3, DGI, Greenlees2018}. 

The importance of the Gorenstein condition in commutative algebra led Dwyer-Greenlees-Iyengar~\cite{DGI} to introduce the notion of a Gorenstein ring spectrum. Ordinary Gorenstein rings satisfy a duality property in terms of their local cohomology and this in fact characterizes Gorenstein rings. This Gorenstein duality property is suitably homotopy invariant, and therefore can be understood for ring spectra, but in topology, the relation between the Gorenstein condition and the duality property is less strict. Dwyer-Greenlees-Iyengar~\cite{DGI} showed that Gorenstein ring spectra often do satisfy an analogous duality property, but that this is not automatic. Therefore, it can be useful to isolate this duality phenomenon, and seek ring spectra which satisfy \emph{local Gorenstein duality}~\cite{BHV, BGduality}. 

In this paper we will study the local Gorenstein duality property for cochain spectra on classifying spaces of compact Lie groups with coefficients in general
commutative noetherian ring spectra, in particular, for complex orientable coefficients.

\subsection*{Local Gorenstein duality} 
Recall that a commutative noetherian local ring $(R, \m,k)$ is said to be \emph{Gorenstein} if $R$ has finite injective dimension as an $R$-module. A local ring is Gorenstein if and only if it satisfies \emph{Gorenstein duality}; that is, the local cohomology of $R$ satisfies
\[H_\m^i(R) \cong \begin{cases} I_\m & i=\mrm{dim}R \\ 0 & \mrm{else}\end{cases}\] where $I_\m$ is the injective hull of the residue field $k$. 
As localization is an exact functor which preserves injectives (for noetherian rings), one sees that if $R$ is Gorenstein, then $R_\p$ is also Gorenstein for all $\p \in \mrm{Spec}(R)$. Therefore, if $R$ is Gorenstein, for any $\p \in \mrm{Spec}(R)$ one obtains a \emph{local Gorenstein duality} statement
\[H_\p^i(R_\p) \cong \begin{cases} I_\p & i=\mrm{dim}(R_\p) \\ 0 & \mrm{else}\end{cases}\] where $I_\p$ is the injective hull of $R/\p$.

Given a commutative noetherian ring spectrum $R$ and a homogeneous prime ideal $\p$ of $\pi_*R$, one can construct functors $\Gamma_\p$ and $(-)_\p$ which are spectral analogues 
of local cohomology and localization~\cite{GMcompletions, BIK}. 
Then one says that the ring spectrum $R$ satisfies \emph{local Gorenstein duality}~\cite{BHV, BGduality} if for each $\p \in \Spech(\pi_*R)$ there exists an integer $\nu(\p)$ such that
\[\Gamma_\p R_\p \simeq \Sigma^{\nu(\p)}\I_\p\]
where $\I_\p$ is the Brown-Comenetz spectrum of the injective hull $I_\p$. 
The integers $\nu(\p)$ can be organized into a shift function $\nu\colon\Spech(\pi_*R)\to \Z$ on the spectrum of prime ideals.

We emphasize that if $\pi_*R$ is Gorenstein, then $R$ has local Gorenstein duality, but the converse is far from true, 
so that this definition does capture more information than a naive definition on homotopy groups. The Gorenstein duality property yields a strongly convergent spectral sequence
\[H_\p^{-s}((\pi_*R)_\p)_t \Rightarrow (I_\p)_{s+t-\nu(\p)}\] which has striking implications for the coefficient ring 
$\pi_*R$ as investigated by Greenlees-Lyubeznik~\cite{GL}. For example, $\pi_*R$ is Cohen-Macaulay if and only if it is Gorenstein, 
and $\pi_*R$ is generically Gorenstein.

\subsection*{Cochain spectra}
One key example where these duality properties starkly manifest themselves is for 
cochain spectra $C^*(BG;R):=\Hom(BG_+, R)$ where $G$ is a compact Lie group and $R$ is a commutative noetherian ring spectrum. This is a commutative ring spectrum, so one may proceed to apply the methods of commutative algebra to $C^*(BG;R)$. The case where $R=k$ is a field and $G$ is a finite group has been studied in depth by many authors~\cite{DGI, BGduality, Benson, BHV}. In this case, the cochain spectrum $C^*(BG;k)$ was shown to have local Gorenstein duality, and this duality recovers Benson-Carlson duality~\cite{BensonCarlson}. Moreover, Benson-Greenlees~\cite{BGduality} showed that once this result is translated to the stable module category, this confirms a conjecture of Benson~\cite{Bensonconj}. These results also hold more generally for compact Lie groups with an additional orientability hypothesis. 

The goal of this paper is to vastly generalize these duality results by considering 
cochain spectra over complex orientable ring spectra $R$. We provide conditions under which $C^*(BG;R)$ satisfies local Gorenstein duality
 and then give various examples coming from chromatic homotopy theory in which our main theorem applies. 
 Therefore, one can view our main result as saying that a chromatic analogue of Benson's conjecture also holds. 
For convenience, we only state our main result for finite groups in this introduction and refer the reader to Theorem~\ref{thm:duality} in the body of the paper for the case of more general compact Lie groups.

\begin{thm*}
Let $G$ be a finite group and $R$ be a commutative ring spectrum with $\pi_*R$ even, noetherian, and of finite global dimension. Then the cochain spectrum $C^*(BG;R)$ has local Gorenstein duality and the shift function on maximal ideals $\m$ of $R^*(BG)$ takes the form
\[
\m \mapsto \nu(\m \cap \pi_*R)-  \dim(\pi_*R_{\m \cap \pi_*R})
\]
where $\nu$ denotes the Gorenstein shift function of the graded ring $\pi_*R$.
\end{thm*}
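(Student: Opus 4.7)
The plan is to reduce local Gorenstein duality for $C^*(BG;R)$ to the corresponding property for the base ring spectrum $R$, via the unit map $R \to C^*(BG;R)$. First I would observe that the hypotheses on $\pi_*R$ — even, noetherian, and of finite global dimension — force it to be a graded regular ring, hence graded Gorenstein. The standard argument (using evenness to produce Koszul resolutions at each homogeneous prime) then lifts this to local Gorenstein duality for $R$ itself, with some explicit shift function $\nu$ on $\Spech(\pi_*R)$, as already noted in the introduction.

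Since $\pi_*R$ is even, $R$ is complex orientable. For a finite group $G$, the Atiyah--Hirzebruch spectral sequence (together with completion-theorem style arguments) shows that $R^*(BG)$ is a finitely generated noetherian $\pi_*R$-algebra, and the Becker--Gottlieb transfer makes $C^*(BG;R)$ a dualizable $R$-module with a retraction up to multiplication by $|G|$. I would then invoke the descent theorem for local Gorenstein duality alluded to in the abstract, applied to the map $R \to C^*(BG;R)$: since $R$ satisfies the property with shift $\nu$, descent should produce the property for $C^*(BG;R)$, with the shift at each maximal ideal determined by $\nu$ together with a correction coming from the relative structure of the map.

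The shift formula at a maximal ideal $\m$ should emerge by comparing the two injective hulls in play. Since $R^*(BG)$ is a finite $\pi_*R$-algebra, maximal ideals of $R^*(BG)$ correspond to maximal ideals $\p = \m \cap \pi_*R$ of $\pi_*R$ by integrality, and one has an identification of the form $I_\m \cong \mathrm{Hom}_{\pi_*R}(R^*(BG), I_\p)$. The resulting shift between the Brown--Comenetz duals $\I_\m$ and $\I_\p$ under the descent formula is precisely what produces the $-\dim((\pi_*R)_\p)$ correction added to $\nu(\p)$ in the statement.

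The principal obstacle I expect is verifying the hypotheses of the descent theorem for $R \to C^*(BG;R)$, and in particular establishing relative Gorenstein-ness with the correct numerical shift. The approach I would adopt is to adapt the Dwyer--Greenlees--Iyengar strategy from the field-coefficient case: use the Rothenberg--Steenrod model of $C^*(BG;R)$ together with the nilpotence of Euler classes of complex representations of $G$ (available since $R$ is complex orientable), and track the shift carefully through the descent. The evenness of $\pi_*R$ is crucial here, since it keeps the Atiyah--Hirzebruch spectral sequence sufficiently collapsed to transfer the structural input from ordinary cohomology to $R$-cohomology.
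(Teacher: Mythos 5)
Your reduction collapses at its central structural claim: for a nontrivial finite group $G$ the cochain spectrum $C^*(BG;R)$ is \emph{not} a dualizable (compact) $R$-module, so the unit map $R \to C^*(BG;R)$ is not finite and none of the ascent/descent machinery applies to it. The Becker--Gottlieb transfer only splits $R$ off $C^*(BG;R)$ after inverting $|G|$; $BG_+$ itself is not dualizable, and concretely $\pi_*C^*(BC_p;\mathbb{F}_p)=H^*(BC_p;\mathbb{F}_p)$ is infinite dimensional over $\mathbb{F}_p$, while $KU^0(BC_p)$ is a completed representation ring and not finitely generated over $\mathbb{Z}$; so $R^*(BG)$ is a finitely generated $\pi_*R$-\emph{algebra} but typically not a finite $\pi_*R$-\emph{module}, which is what your argument needs. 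You also have the direction of descent reversed: Theorem~\ref{thm:descent} deduces local Gorenstein duality for the \emph{source} of a finite descendable map from that of the target, whereas passing from $R$ to $C^*(BG;R)$ requires the ascent theorem (Theorem~\ref{thm:Gorascent}) --- and its hypotheses (finiteness, and relative Gorenstein-ness of the unit, i.e.\ $\Hom_R(C^*(BG;R),R)\simeq\Sigma^a C^*(BG;R)$) fail for this map; the DGI-style duality you allude to concerns the augmentation $C^*(BG;k)\to k$, a different condition. Consequently your proposed derivation of the shift via $I_\m\cong\Hom_{\pi_*R}(R^*(BG),I_\p)$ also has no basis, since it again presupposes finiteness over $\pi_*R$.

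The paper instead interpolates through a unitary group: choose a faithful representation $G\hookrightarrow U(n)$ and work with the map $C^*(BU(n);R)\to C^*(BG;R)$. Evenness of $\pi_*R$ gives a complex orientation, so $\pi_*C^*(BU(n);R)=(\pi_*R)\llbracket c_1,\ldots,c_n\rrbracket$ is algebraically Gorenstein (Corollary~\ref{cor-poly-shift-gor}) and $C^*(BU(n);R)$ has local Gorenstein duality by Lemma~\ref{lem:alggor}; finiteness of the map is the generalized Venkov theorem plus the finite global dimension hypothesis (Theorem~\ref{thm:venkov2}, Lemma~\ref{lem:compact}), and the genuinely hard input is that this map is relatively Gorenstein (Theorem~\ref{thm:relGor2}), which rests on the equivariant generation result that $b_UR$ generates its module category (Theorem~\ref{thm:unitary}) and an orientability statement for $LU$ where evenness is used again. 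Your proposal contains no substitute for these steps --- nilpotence of Euler classes and a Rothenberg--Steenrod model do not by themselves produce either the finiteness over a suitable base or the relative Gorenstein equivalence. The shift formula then falls out of this route: on maximal ideals one gets $n(n+1)+\nu(\m\cap\pi_*R)-\mathrm{ht}$ for the power series ring, $\mathrm{ht}=\dim(\pi_*R_{\m\cap\pi_*R})+n$, and subtracting the relative shift $n^2$ (as $\dim G=0$) yields $\nu(\m\cap\pi_*R)-\dim(\pi_*R_{\m\cap\pi_*R})$, rather than any Matlis-dual comparison over $\pi_*R$.
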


A large array of ring spectra to which this theorem applies is given in Section~\ref{sec:examples}. In particular, this includes complex $K$-theories, Lubin-Tate spectra and various forms of topological modular forms.

We now provide a rough outline of how the proof proceeds. 

\subsection*{The main ingredients} 
Let us consider a commutative ring spectrum $R$ with $\pi_*R$ even, noetherian and of finite global dimension. Consider a finite group $G$; we also treat the case when $G$ is a compact Lie group, but we restrict to the finite case in this introduction for simplicity.

We may embed $G$ into a unitary group $U$, and consider the induced ring map 
\[
f\colon C^*(BU;R) \to C^*(BG;R).
\]
Since $\pi_*R$ is even, we may choose a complex orientation for $R$ and describe 
the homotopy groups of $C^*(BU;R)$ as a power series ring over the ring $\pi_*R$. As these homotopy groups are regular and hence Gorenstein, we can immediately deduce local Gorenstein duality for the cochain spectrum $C^*(BU;R)$. On the other hand, the homotopy of $C^*(BG;R)$ is often difficult to calculate and in general will not be regular (nor even Gorenstein).
We can overcome this problem by applying the ascent theorem for local Gorenstein duality of Barthel-Heard-Valenzuela~\cite{BHV} to the map $f$ above.

There are two main conditions that one needs to verify. 

Firstly, we show that the map $f$ is finite, or in other words, that $C^*(BG;R)$ is a compact $C^*(BU;R)$-module. We prove this by combining the finite global dimension assumption on $\pi_*R$ with a result of Venkov, which is classically only stated for $R$ a discrete ring but that holds more generally for any complex orientable ring spectrum, see Theorem~\ref{thm:venkov2}. 

Secondly, we show that the map $f$ is \emph{relatively Gorenstein} of shift $d=\mrm{dim}(U/G)$, meaning that
\[
\Hom_{C^*(BU;R)}(C^*(BG;R), C^*(BU;R)) \simeq \Sigma^d C^*(BG;R).
\]
In fact, one may also deduce that the relative Gorenstein condition holds more generally for the map $C^*(BG;R) \to C^*(BH;R)$ where $H$ is a subgroup of $G$, see Theorem~\ref{thm:relGor2}. The assumption that $\pi_*R$ is even (as opposed to $R$ being complex orientable) is used in this step to ensure that a certain orientability condition for the adjoint representation of $U$ is satisfied. This can be verified by showing that certain differentials in the homotopy fixed points spectral sequence are zero, see Lemma~\ref{lem:LUorientable}. The proof of the relatively Gorenstein condition relies on equivariant techniques and uses a certain generation result for modules over Borel spectra. More precisely, we show that for a complex orientable ring spectrum $R$, the genuine $U$-equivariant Borel spectrum $b_UR$ generates its category of modules, see Theorem~\ref{thm:unitary}. We note that a priori the module category is generated by the module spectra $U/H_+ \wedge b_UR$ as $H$ ranges over the closed subgroups of $U$. It is therefore surprising that in this case only $b_U R$ is required. 
We prove this result by combining a method of Greenlees~\cite{Greenlees2020} used in the case when $R$ is an ordinary commutative ring, with the unipotence results of Mathew-Naumann-Noel~\cite{MathewNaumannNoel17}. 

\subsection*{Descent}
We also prove a descent theorem for local Gorenstein duality which allows us to deduce further duality phenomena. The motivating example is the complexification map $f\colon KO \to KU$ between $K$-theory spectra. Our descent result applies more generally to maps which are finite and descendable, see Definition~\ref{def-descendable}. As discussed in~\cite{MathewGalois}, these maps can be viewed as a generalization of faithful Galois extensions.

In order to descend local Gorenstein duality along a finite descendable map $f\colon R \to S$ of commutative noetherian ring spectra we will require the shift function of $S$ to be of a certain form as we now describe. Given $\p \in \Spech(\pi_*R)$, we let $\pb$ denote a prime ideal of $\pi_*S$ such that $f^{-1}(\pb) = \p$. Since $f$ is finite the lying over theorem ensures that such a prime does exist, but in general there will be many choices of such a prime. We therefore require the shift function of $S$ to be locally constant in the sense that each such choice of $\pb$ has the same shift. The following descent statement appears in the body of the paper as Theorem~\ref{thm:descent}. 

\begin{thm*}
Let $f\colon R \to S$ be a map of commutative noetherian ring spectra such that: 
\begin{itemize}
\item[(a)] $f\colon R \to S$ is relatively Gorenstein of shift $a\in \Z$;
\item[(b)] $f$ is finite and descendable;
\item[(c)] $S$ has local Gorenstein duality with shift function $\nu$ locally constant relative to $f$.
\end{itemize}
Then $R$ has local Gorenstein duality with shift function $\p \mapsto \nu(\pb) + a$.
\end{thm*}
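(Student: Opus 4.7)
The plan is to establish $\Gamma_\p R_\p \simeq \Sigma^{\nu(\pb)+a}\I_\p$ by computing both sides after base change along $f$ and then descending using hypothesis (b). Fix $\p \in \Spech(\pi_*R)$. By lying-over for the finite map $f$, let $\pb_1,\ldots,\pb_n$ be the finitely many primes of $\pi_*S$ above $\p$, with $\pb$ a chosen representative; the locally-constant hypothesis in (c) ensures that $\nu(\pb_i)$ is independent of $i$. Finiteness of $f$ also makes $S$ perfect as an $R$-module, so the natural map $\Hom_R(S,R)\otimes_R M \to \Hom_R(S,M)$ is an equivalence for every $R$-module $M$. Combined with (a), this yields a canonical equivalence of $S$-module-valued functors
\[
\Hom_R(S,-) \simeq \Sigma^{a}\,(S \otimes_R -),
\]
so in particular $\Hom_R(S,-)$ commutes with $\Gamma_\p$ and $(-)_\p$.

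The next step is to compute $\Gamma_\p R_\p \otimes_R S$ and $(\Sigma^{\nu(\pb)+a}\I_\p)\otimes_R S$ and verify that they agree. For the source, local cohomology and localization at $\p$ commute with $-\otimes_R S$, giving $\Gamma_\p R_\p \otimes_R S \simeq \Gamma_\p S_\p$. After inverting $\pi_*R \setminus \p$, the ideal generated by $\p$ inside $\pi_*S$ has radical equal to the intersection of the maximal ideals $\pb_i(\pi_*S)_\p$ of the semi-local ring $(\pi_*S)_\p$, whence $\Gamma_\p S_\p \simeq \bigoplus_i \Gamma_{\pb_i}S_{\pb_i}$. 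Applying local Gorenstein duality for $S$ together with the locally-constant hypothesis then gives $\Gamma_\p S_\p \simeq \Sigma^{\nu(\pb)}\bigoplus_i \I_{\pb_i}$. For the target, combining the relative-Gorenstein identification from the first paragraph with the spectral Matlis base-change equivalence
\[
\Hom_R(S,\I_\p) \simeq \bigoplus_{i}\I_{\pb_i}
\]
(realizing at the level of homotopy groups the classical decomposition of $\Hom_R(S,I_\p)$ over a module-finite map of noetherian rings) yields $(\Sigma^{\nu(\pb)+a}\I_\p)\otimes_R S \simeq \Sigma^{\nu(\pb)}\bigoplus_i \I_{\pb_i}$, matching the source computation.

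To conclude, I would upgrade this agreement of base changes to an equivalence in $R$-modules using descendability. By (b), the category of $R$-modules is recovered as the totalization of the cosimplicial diagram of $S^{\otimes_R \bullet+1}$-modules, and an equivalence of $R$-modules can be constructed from an equivalence of base changes equipped with compatible descent data. All the equivalences produced above---from the relative-Gorenstein formula, the splitting of local cohomology over a semi-local base, and spectral Matlis duality---are natural, and so should assemble into descent data along the Amitsur cobar of $f$, producing the desired equivalence $\Gamma_\p R_\p \simeq \Sigma^{\nu(\pb)+a}\I_\p$. The main technical obstacle is to pin down the spectral Matlis base-change formula $\Hom_R(S,\I_\p) \simeq \bigoplus_i \I_{\pb_i}$ and to verify that the natural equivalences constructed above assemble into coherent descent data; both are plausible extensions of classical commutative-algebra facts, but require some care in the $\infty$-categorical setting.
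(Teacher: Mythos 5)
Your preparatory steps line up with the paper's: the ambidexterity equivalence $\Hom_R(S,-)\simeq\Sigma^a(S\otimes_R-)$ is Lemma~\ref{lem:ambi}, the splitting of $\Gamma_\p R_\p\otimes_R S$ into $\bigoplus_i\Gamma_{\pb_i}S_{\pb_i}$ is the content of \cite[7.10]{BIK2}, and your ``spectral Matlis base-change'' $\Hom_R(S,\I_\p)\simeq\bigoplus_i\I_{\pb_i}$ is exactly \cite[4.9 and proof of 4.27]{BHV}, so those parts are fine modulo citation. The genuine gap is the final step. What you actually establish is an abstract equivalence of $S$-modules $\Gamma_\p R_\p\otimes_R S\simeq(\Sigma^{\nu(\pb)+a}\I_\p)\otimes_R S$. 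Even for a descendable map this does not yield an equivalence over $R$: you would need either a map of $R$-modules $\Gamma_\p R_\p\to\Sigma^{\nu(\pb)+a}\I_\p$ becoming an equivalence after $-\otimes_RS$ (conservativity of base change, which descendability does give, would then finish), or a genuinely coherent descent datum, i.e.\ an equivalence of the base changes together with an infinite tower of compatibilities over $S\otimes_RS$, $S\otimes_RS\otimes_RS$, and so on. No candidate map over $R$ exists a priori, and ``the equivalences are natural, so they should assemble'' does not produce these higher coherences; the assertion that the two sides agree after one base change carries essentially no information about such a datum. This is precisely the crux of the theorem, and you have deferred it rather than proved it.

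The paper's proof is designed to avoid ever constructing descent data. From the single $S$-module equivalence $f_!\Gamma_\p R_\p\simeq\Sigma^{\nu(\q)+a}f_!\I_\p$ (obtained from the two splittings above, ambidexterity, and local constancy of $\nu$), the restriction--coextension adjunction gives the equivalence (\ref{eq:coext}), $\Hom_R(N,\Gamma_\p R_\p)\simeq\Hom_R(N,\Sigma^{\nu(\q)+a}\I_\p)$, natural in the $S$-module $N$. Descendability, via \cite[2.12]{Mathewexamples}, exhibits $R$ as a \emph{universal} limit of the Amitsur diagram $S^{\otimes_R\bullet+1}$, a limit preserved by any exact functor; applying the two corepresented functors $\Hom_R(-,\Gamma_\p R_\p)$ and $\Hom_R(-,\Sigma^{\nu(\q)+a}\I_\p)$ to this one diagram and comparing them by the natural equivalence identifies the resulting colimits, giving $\Gamma_\p R_\p\simeq\Sigma^{\nu(\q)+a}\I_\p$ with no comonadic descent and no coherence check beyond naturality of a single transformation. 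If you want to repair your outline, replace the object-level descent argument by this functorial comparison applied to the cobar presentation of $R$.
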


In particular we apply this result to the complexification maps $KO \to KU$ and $ko \to ku$ and deduce local Gorenstein duality for $KO$ and $ko$. We note that $ko$ was already known to have Gorenstein duality at maximal 
  ideals by~\cite[18.1, 19.1]{Greenlees2018}. The fact that this holds for 
  all prime ideals as well as for the periodic version $KO$ seems to be new.   

\subsection*{Relation to other work} 
The idea of investigating Gorenstein duality phenomena in ring spectra appears in several places, see~\cite{GreK, BG97} for instance. Dwyer-Greenlees-Iyengar~\cite{DGI} gave the first systematic study of this, leading to a powerful new point of view on such duality statements. 
The Gorenstein duality property in modular representation theory at maximal ideals was shown in~\cite{Gre95, BG97, DGI}, and Benson-Greenlees~\cite{BGduality} then showed that this duality holds at all prime ideals by
extending algebraic work of Greenlees-Lyubeznik~\cite{GL} on dual localization to the spectral world. Barthel-Heard-Valenzuela~\cite{BHV} 
made a general definition of local Gorenstein duality for arbitrary commutative noetherian ring spectra and established its key properties. Together with Castellana, they also studied local Gorenstein duality for cochains on $p$-compact groups~\cite{BCHV2} and for more general spaces~\cite{BCHV} all with field coefficients. The equivariant approach of studying Gorenstein duality for cochains was employed by Benson-Greenlees~\cite{BensonGreenlees14} although there were some inaccuracies there which were later corrected by Greenlees~\cite{Greenlees2020}. Our approach owes a debt to all of the above-mentioned works.

\subsection*{Conventions} 
Let $\C$ be a presentable, symmetric monoidal stable $\infty$-category for which the tensor product 
$\otimes$ commutes with colimits separately in each variable. 
An object $X \in \C$ is \emph{compact} if the natural map of mapping spectra
\[\bigoplus \Hom_\C(X, Y_i) \to \Hom_\C(X, \bigoplus Y_i)\]
is an equivalence for any collection $\{Y_i\}$ of objects of $\C$. A full, replete subcategory $\D$ of $\C$ is \emph{thick} if it is closed under suspensions, triangles and retracts, and \emph{localizing} if it is thick and closed under arbitrary sums. We say that $\D$ is an \emph{ideal} if it is closed under tensoring with arbitrary objects of $\C$. Given a set of objects $\G \subset \C$ we write $\mrm{Thick}(\G)$ (resp., $\mrm{Loc}(\G)$, resp., $\mrm{Thick}^\otimes(\G)$, resp., $\mrm{Loc}^\otimes(\G)$) for the smallest thick subcategory (resp., localizing subcategory, resp., thick ideal, resp., localizing ideal) of $\D$ containing $\G$. For objects $X,Y \in \C$, if $Y \in \mrm{Thick}(X)$ we say that $X$ \emph{finitely builds} $Y$, and if $Y \in \mrm{Loc}(X)$ we say that $X$ \emph{builds} $Y$. A set $\G$ of objects of $\C$ is said to \emph{generate} $\C$ if $\mrm{Loc}(\G) = \C$.

We work in the underlying symmetric monoidal $\infty$-category of orthogonal $G$-spectra (and non-equivariant spectra) as discussed in~\cite[\S 5]{MathewNaumannNoel17}. A commutative ring spectrum is the same as an $\mbb{E}_\infty$-ring spectrum. It is noetherian if its homotopy groups form a graded noetherian ring. By a complex oriented commutative ring spectrum we always mean a commutative ring spectrum with an $\mbb{E}_1$-complex orientation. For a commutative ring spectrum $R$, we will write $\Hom_R(-,-)$ for the internal hom in $R$-modules. We will use the same notation even if $R$ is a commutative ring $G$-spectrum. The internal hom in spectra and $G$-spectra will be denoted by $\Hom(-,-)$, and we write $D$ for the functional dual $D = \Hom(-, S^0)$. Subgroups of compact Lie groups are always assumed to be closed.

Our grading conventions are homological so that differentials decrease degree by one. Degrees (subscripts) are converted to codegrees (superscripts) via negation. The suspension functor is cohomological so that $\pi_n(\Sigma^i X) = \pi_{n-i} X$.

\subsection*{Acknowledgements}
We are grateful to  Tobias Barthel, John Greenlees, Niko Naumann and Liran Shaul for several helpful suggestions and discussions. The first author thanks the SFB 1085 Higher Invariants in Regensburg for support.
The second named author was supported by the grant GA~\v{C}R 20-02760Y from the Czech Science Foundation.

\section{Local Gorenstein duality in algebra}
In this section we discuss some important algebraic preliminaries which we will require in order
to discuss local Gorenstein duality for ring spectra. We assume that the reader is familiar with local cohomology (in the classical algebraic setting) and refer the reader to~\cite[Appendix A]{Greenlees2018} for a brief overview of the definition and key properties.

\subsection{Equicodimensionality and the dimension formula}

Throughout we will work with a graded commutative noetherian ring $R$. In concrete terms this means that $R_0$ is noetherian and that $R$ is a finitely generated $R_0$-algebra. 
Let us start by recalling the following terminology. 

\begin{defn}
Let $R$ be a (graded) commutative ring and $\p$ a (homogeneous) prime ideal of $R$.
\begin{itemize}
\item The height of $\p$ is the dimension of the ring $R_\p$. We will denote the height of $\p$ by $\mrm{ht}_R(\p)$ or simply $\mrm{ht}(\p)$ when the ring is clear 
from the context.
\item The coheight of $\p$ is the dimension of the quotient ring $R/\p$. We will 
denote the coheight of $\p$ by $\mrm{coht}_R(\p)$ or $\mrm{coht}(\p)$.
\item The ring $R$ is said to be \emph{equicodimensional} if each of its maximal ideals has the same height.
\end{itemize}
\end{defn}

\begin{ex}
Any local ring is equicodimensional. The integers are also equicodimensional. 
\end{ex}

Recall that a local noetherian ring is said to be \emph{Cohen-Macaulay} if its depth is equal to its dimension, or equivalently if its local cohomology is concentrated in a single degree. A non-local ring is said to be Cohen-Macaulay if each localization at a prime ideal is Cohen-Macaulay.

\begin{lem}[{\cite[2.1.4]{BrunsHerzog}}]\label{lem:equi}
If $R$ is a local Cohen-Macaulay ring, then for each prime ideal $\p$ of $R$, there is an equality \[\mrm{ht}(\p) + \mrm{coht}(\p) = \mrm{dim}(R).\]
\end{lem}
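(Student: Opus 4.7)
The plan is to prove the two inequalities $\mrm{ht}(\p) + \mrm{coht}(\p) \leq \mrm{dim}(R)$ and $\mrm{ht}(\p) + \mrm{coht}(\p) \geq \mrm{dim}(R)$ separately. The first inequality holds in any noetherian ring: given saturated prime chains $\p_0 \subsetneq \cdots \subsetneq \p_{\mrm{ht}(\p)} = \p$ and $\p = \q_0 \subsetneq \cdots \subsetneq \q_{\mrm{coht}(\p)}$ realizing the respective heights and coheights, their concatenation is a prime chain of length $\mrm{ht}(\p) + \mrm{coht}(\p)$ sitting inside $R$, so its length is at most $\mrm{dim}(R)$. No Cohen-Macaulay hypothesis is needed here.

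For the reverse inequality I would induct on $h := \mrm{ht}(\p)$. In the base case $h = 0$, the prime $\p$ is minimal. This is where Cohen-Macaulayness first bites: in a local Cohen-Macaulay ring one has $\mrm{depth}(R) = \mrm{dim}(R)$, which forces every associated prime of $R$ to be minimal of coheight exactly $\mrm{dim}(R)$ (the unmixedness statement; an embedded associated prime would cut the depth strictly below the dimension). Hence $\mrm{coht}(\p) = \mrm{dim}(R)$ and the formula holds.

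For the inductive step, I would use prime avoidance to choose $x \in \p$ avoiding every minimal prime of $R$. Since the set of zero-divisors in $R$ is the union of the associated primes and in a Cohen-Macaulay ring these are all minimal, such an $x$ is automatically a non-zero-divisor. Then $R/(x)$ is again local Cohen-Macaulay of dimension $\mrm{dim}(R) - 1$, and the image $\bar{\p}$ of $\p$ in $R/(x)$ satisfies $\mrm{ht}(\bar{\p}) = h - 1$ and $\mrm{coht}(\bar{\p}) = \mrm{coht}(\p)$. Applying the induction hypothesis to $\bar{\p}$ in the quotient gives $(h-1) + \mrm{coht}(\p) = \mrm{dim}(R) - 1$, which rearranges to the desired identity.

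The main technical obstacle lies in justifying that $\mrm{ht}(\bar{\p}) = h - 1$ on the nose. The inequality $\mrm{ht}(\bar{\p}) \leq h - 1$ is easy by lifting chains, but the equality relies on having chosen $x$ outside every minimal prime of $R$ contained in $\p$, together with Krull's principal ideal theorem to prevent any further collapse of height when passing to the quotient. Once this step is in place the induction closes cleanly, and the combination of the two inequalities yields the stated dimension formula.
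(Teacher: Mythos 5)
Your argument is correct. The paper gives no proof of this lemma at all --- it simply cites Bruns--Herzog [2.1.4] --- and your induction on $\mrm{ht}(\p)$, quotienting by a nonzerodivisor $x \in \p$ chosen by prime avoidance to lie outside the minimal (equivalently, in the Cohen--Macaulay case, associated) primes, is essentially the standard argument found in that reference, with the base case handled by $\mrm{depth}(R) \le \dim(R/\p)$ for $\p \in \mrm{Ass}(R)$. One small bookkeeping remark: lifting chains alone only gives $\mrm{ht}(\bar{\p}) \le \mrm{ht}(\p)$; the improvement to $\mrm{ht}(\p)-1$ already uses that $x$ avoids every minimal prime (so the lifted chain, whose bottom prime contains $x$, can be extended one step further down), while Krull's principal ideal theorem supplies the opposite bound $\mrm{ht}(\bar{\p}) \ge \mrm{ht}(\p)-1$ --- all ingredients you list, just distributed slightly differently between the two inequalities.
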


\begin{rem}\label{rem:error}
We emphasize that this equality need not hold if $R$ is not local (even if it is Cohen-Macaulay, or even regular). For example let $(R,\m)$ be a discrete valuation ring whose maximal ideal has uniformizing parameter $\pi$, i.e., $\m=(\pi)$. The ring $R[T]$ has dimension $2$ and for the maximal ideal $\p =(\pi T-1)$, the dimension formula is false: 
\[
\mrm{ht}(\p)+\mrm{coht}(\p)=1+0=1 \not =2=\mrm{dim}(R[T]).
\]
We thank the StackExchange user Georges Elencwajg for providing this counterexample~\cite{stack}.
The failure of this equality when $R$ is not local is behind the error in the proof of~\cite[4.6]{BHV}.
\end{rem}

We are interested in the equicodimensional property because of its relation to the dimension formula.
\begin{prop}\label{prop:dimformula}
Let $R$ be a Cohen-Macaulay ring. Then $R$ is equicodimensional if and only if $\mrm{ht}(\p) + \mrm{coht}(\p) = \mrm{dim}(R)$ for all prime ideals $\p$ of $R$.
\end{prop}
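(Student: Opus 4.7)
The plan is to prove both directions separately, with the forward direction leveraging Lemma~\ref{lem:equi} at a carefully chosen maximal ideal, and the reverse direction being essentially immediate from the definition.

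For the reverse implication, suppose the dimension formula $\mrm{ht}(\p) + \mrm{coht}(\p) = \dim(R)$ holds for all primes. For any maximal ideal $\m$, we have $\mrm{coht}(\m) = \dim(R/\m) = 0$, so the formula forces $\mrm{ht}(\m) = \dim(R)$. In particular all maximal ideals have the same height, so $R$ is equicodimensional (and this direction does not require the Cohen-Macaulay hypothesis).

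For the forward implication, fix a prime $\p$ of $R$. The main idea is to pass to a suitable local ring and invoke Lemma~\ref{lem:equi}. Since $R$ is noetherian, there exists a maximal ideal $\m \supseteq \p$ which \emph{realizes the coheight} of $\p$, in the sense that
\[
\mrm{coht}(\p) = \dim(R/\p) = \dim(R_\m/\p R_\m);
\]
indeed, a saturated chain of primes in $R/\p$ of maximal length terminates at some maximal ideal $\m/\p$. Now $R_\m$ is a local Cohen-Macaulay ring, so by Lemma~\ref{lem:equi} applied to the prime $\p R_\m$ of $R_\m$,
\[
\mrm{ht}(\p R_\m) + \mrm{coht}(\p R_\m) = \dim(R_\m).
\]
The left-hand terms agree with $\mrm{ht}(\p)$ and $\mrm{coht}(\p)$ respectively: the first by definition of height via localization, the second by our choice of $\m$. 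Finally, the equicodimensionality hypothesis gives $\dim(R_\m) = \mrm{ht}(\m)$, and since every chain of primes extends to end at some maximal ideal we have $\dim(R) = \sup_{\m'} \mrm{ht}(\m')$, which by equicodimensionality equals $\mrm{ht}(\m)$ for every maximal $\m$. Combining these yields $\mrm{ht}(\p) + \mrm{coht}(\p) = \dim(R)$, as required.

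The only slightly delicate point is the existence of a maximal ideal $\m \supseteq \p$ that realizes the coheight of $\p$; this is the one place where the noetherian hypothesis is genuinely used, via the fact that Krull dimension of $R/\p$ is attained by some saturated chain of primes. Everything else is a direct application of Lemma~\ref{lem:equi} together with the bookkeeping that height and coheight behave correctly under localization at such an $\m$.
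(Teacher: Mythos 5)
Your proof is correct and takes essentially the same approach as the paper: both directions come down to applying Lemma~\ref{lem:equi} to the local Cohen--Macaulay ring $R_\m$ for a maximal ideal $\m \supseteq \p$ and then using equicodimensionality to identify $\dim(R_\m)$ with $\dim(R)$. The only cosmetic difference is that you pick $\m$ so as to realize $\mrm{coht}(\p)$, whereas the paper takes an arbitrary $\m \supseteq \p$ and recovers $\dim(R_\m/\p) = \dim(R/\p)$ from the general inequality $\mrm{ht}(\p)+\mrm{coht}(\p) \leq \dim(R)$ by a squeeze.
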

\begin{proof}
The backward implication follows immediately from the fact that the coheight of a maximal ideal is zero. For the forward implication, fix a prime ideal $\p$ of $R$ and a maximal ideal $\m\supset \p$. By Lemma~\ref{lem:equi}, we have $\mrm{ht}(\p) + \mrm{dim}(R_\m/\p) = \mrm{dim}(R_\m)$. Since $R$ is equicodimensional we have $\mrm{dim}(R) = \mrm{dim}(R_\m)$ and so $\mrm{dim}(R_\m/\p) = \mrm{dim}(R)-\mrm{ht}(\p)$. We always have $\mrm{ht}(\p)+ \mrm{dim}(R/\p)\leq \mrm{dim}(R)$ and so 
we deduce that 
\[
\mrm{dim}(R/\p)\leq \mrm{dim}R-\mrm{ht}(\p)=\mrm{dim}(R_\m/\p).
\]
Clearly, $\mrm{dim}(R_\m/\p)\leq \mrm{dim}(R/\p)$ so we conclude that 
$\mrm{dim}(R_\m/\p)= \mrm{dim}(R/\p)$.  Therefore the dimension formula holds for $R$. 
\end{proof}

We now turn to considering the behaviour of coheight along finite ring maps. Given a map $f\colon R \to S$ of commutative noetherian rings, we write $\mrm{res}_f\colon \Spech(S) \to \Spech(R)$ for the induced map on prime spectra.
\begin{lem}\label{lem:coheight}
  Let $f\colon R \to S$ be a map of commutative noetherian rings such that $S$ is 
  a finitely generated $R$-module. Let $\q$ be a prime ideal of $S$ and write 
  $\p = \res_f(\q)$ for the induced prime ideal of $R$. 
  Then $\mrm{coht}_R(\p) = \mrm{coht}_S(\q)$.
\end{lem}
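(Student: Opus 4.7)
The plan is to reduce the statement, by passing to quotients, to the classical fact that a finite (equivalently, integral) extension of integral domains preserves Krull dimension.

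First I would observe that the map $f$ descends to a map $\bar f \colon R/\p \to S/\q$. This map is injective: if $r \in R$ lifts an element of $\ker(\bar f)$, then $f(r) \in \q$, so $r \in f^{-1}(\q) = \p$. Hence $\bar f$ realizes $R/\p$ as a subring of $S/\q$, and both of these are integral domains. Moreover, since $S$ is a finitely generated $R$-module, the quotient $S/\q$ is a finitely generated $R/\p$-module, so the extension $R/\p \hookrightarrow S/\q$ is finite, and in particular integral.

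Next I would invoke the classical Cohen--Seidenberg package (going-up plus incomparability, see e.g.\ \cite[2.2.5]{BrunsHerzog}) applied to the integral domain extension $R/\p \hookrightarrow S/\q$. This gives the equality of Krull dimensions
\[
\mrm{dim}(R/\p) = \mrm{dim}(S/\q),
\]
which is precisely $\mrm{coht}_R(\p) = \mrm{coht}_S(\q)$ by definition of coheight.

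There is no serious obstacle: the only subtlety is checking the injectivity of $\bar f$, which uses the hypothesis $\p = \res_f(\q)$ in an essential way, and ensuring that finiteness is preserved on passage to quotients. Everything else is a direct appeal to the standard behaviour of dimension under integral extensions. In the graded setting one applies the graded version of the same result, which proceeds in exactly the same way after restricting to homogeneous primes.
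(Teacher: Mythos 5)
Your proof is correct and in substance coincides with the paper's: both arguments come down to going-up (lying over) together with incomparability for a finite, hence integral, extension. The only difference is packaging — you first pass to the induced finite extension of domains $R/\p \hookrightarrow S/\q$ and invoke the standard fact that integral extensions preserve Krull dimension, whereas the paper applies going-up and incomparability directly to chains of primes above $\p$ and $\q$; either route is fine.
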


\begin{proof}
 Consider a chain of prime ideals 
 $\p \subset \p_1 \subset \p_2 \subset \cdots \subset \p_n$ of $R$. 
 Since $f$ is finite, the going-up theorem holds~\cite[4.9]{Johnson} and so there exists
 a chain of prime ideals 
 $\q \subset \q_1 \subset \q_2 \subset \cdots \subset \q_n $ of $S$, giving 
 $\mrm{coht}(\p) \leq \mrm{coht}(\q)$. 
 
 Conversely, if $\q \subseteq \q'$ are prime ideals of $S$ such that 
  $\res_f(\q) = \res_f(\q')$, then $\q = \q'$ by the incomparability theorem~\cite[4.7]{Johnson}.
  This shows that the inverse image of a chain of primes in $S$ containing $\q$ is a 
  chain of primes in $R$ containing $\p$ in which the inclusions are strict. 
  Therefore $\mrm{coht}(\p) \geq \mrm{coht}(\q)$ as required.
\end{proof}

\subsection{Graded Gorenstein rings}
We now discuss some aspects of graded Gorenstein rings which are relevant to the
notion of local Gorenstein duality of ring spectra. Our approach closely follows~\cite{BHV, GL}, but 
we have to give modified versions of some definitions and results in order to treat the examples which arise
later on in this paper.

Given a prime ideal $\p$ of $R$, we write $I_\p$ for the injective hull of $R/\p$. 
\begin{rem}
We note that $I_\p$ is naturally an $R_\p$-module, and that there is an isomorphism of $R_\p$-modules between $I_\p$ and the injective hull of $R_\p/\p R_\p$,
see~\cite[3.77]{Lam}. 
\end{rem}

\begin{defn}\label{def-graded-gorenstein}\leavevmode
\begin{itemize}
\item A graded commutative noetherian local ring $(R,\m)$ is \emph{graded Gorenstein} if there exists $\nu(\m)\in \Z$ such that
\[
H^i_{\mathfrak{m}}(R)=
\begin{cases} 
\Sigma^{\nu(\m)}I_\m &\mathrm{if}\;\; i=\mrm{dim}(R) \\
0 & \mathrm{if} \;\; i \not = \mrm{dim}(R) 
\end{cases}
\]
 In this case, we say that $R$ is a \emph{graded Gorenstein ring of shift $\nu(\m)$}. 
 \item A graded commutative ring $R$ is \emph{graded Gorenstein} if $R_\m$ is a graded local Gorenstein ring for each maximal ideal $\m$ of $R$. The values $\nu(\m)$ assemble to give a function $\nu\colon \mrm{MaxSpec}^\mrm{h}(R) \to \mathbb{Z}$ which we call the \emph{shift function} of $R$.
\end{itemize}
\end{defn}

\begin{rem}\label{rem:comparison}
This definition is different from that given in~\cite[4.5]{BHV} where the shift function
$\nu$ must be of the form $\nu(\m) = \mrm{ht}(\m) + c$, for a fixed $c\in\Z$. The definition given in~\cite{BHV} works well when $R$ is local, but has two problematic consequences otherwise:
\begin{itemize}
\item If $R$ is an ungraded Gorenstein ring, then it need not be graded Gorenstein in the sense of~\cite{BHV}. 
More precisely, $R$ is graded Gorenstein in the sense of~\cite{BHV} if and only if $R$ is equicodimensional.
\item If $R$ is a graded Gorenstein ring and $\p\in \Spech(R)$, then $R_\p$ need not be graded Gorenstein in the sense of~\cite{BHV}. 
The error in~\cite[4.6]{BHV} is that the dimension formula need not hold in general, see Remark~\ref{rem:error}.
\end{itemize}
On the other hand, the definition we give above does not suffer from these issues.
\end{rem}

\begin{ex}
Let $R$ be an ungraded Gorenstein ring. Then $R$ is a graded Gorenstein ring with shift 
$\nu = 0$, the zero function. 
\end{ex}

The following result is well-known, see for example~\cite[18.2]{Matsumura1989}. 

\begin{thm}
A graded commutative ring $R$ is graded Gorenstein if and only if $R_\p$ is graded Gorenstein for each $\p \in \Spech(R)$. Therefore the shift function $\nu\colon \mrm{MaxSpec}^\mrm{h}(R) \to \Z$ of $R$ extends to a function $\Spech(R) \to \Z$ which we still denote by $\nu$, with the property that
\[H^i_\p(R_\p) = \begin{cases} 
\Sigma^{\nu(\p)}I_\p &\mathrm{if}\;\; i=\mrm{dim}(R_\p) \\
0 & \mathrm{if} \;\; i \not = \mrm{dim}(R_\p). 
\end{cases} \]
\end{thm}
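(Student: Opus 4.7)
The backward implication is immediate from the definition, since every homogeneous maximal ideal is in particular a homogeneous prime ideal. For the forward implication, we need to show that for any $\p \in \Spech(R)$, the graded localization $R_\p$ is graded Gorenstein local.

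The plan is to reduce to a maximal ideal. Fix $\p \in \Spech(R)$ and choose a homogeneous maximal ideal $\m \supseteq \p$. By hypothesis $R_\m$ is graded Gorenstein local, and in particular it has finite graded injective dimension as a module over itself. The ring $R_\p$ may be obtained as a further graded localization $R_\p = (R_\m)_{\p R_\m}$, and since graded localization preserves finite graded injective dimension (minimal graded injective resolutions remain minimal graded injective resolutions after localization), $R_\p$ also has finite graded injective dimension. A graded commutative noetherian local ring of finite graded injective dimension is graded Gorenstein local, so $R_\p$ is graded Gorenstein, as required.

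It then remains to extract the shift and the explicit local cohomology formula. Applying Definition~\ref{def-graded-gorenstein} to the graded local ring $R_\p$, there exists a unique integer, which we denote $\nu(\p)$, such that $H^i_{\p R_\p}(R_\p)$ vanishes for $i \neq \mrm{dim}(R_\p)$ and equals $\Sigma^{\nu(\p)} I_{\p R_\p}$ for $i = \mrm{dim}(R_\p)$. Using that $I_{\p R_\p} \cong I_\p$ as $R_\p$-modules (as noted in the remark preceding Definition~\ref{def-graded-gorenstein}) and that graded local cohomology commutes with localization so that $H^i_\p(R_\p) = H^i_{\p R_\p}(R_\p)$, we obtain the stated formula. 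When $\p = \m$ is maximal this recovers the original definition of $\nu(\m)$, so the resulting function $\Spech(R) \to \Z$ indeed extends the shift function of the definition.

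The main content of the argument is the claim that graded localization preserves the graded Gorenstein property, i.e.\ that the ungraded classical statement extends to the graded noetherian setting. The potential obstacle is being careful with the graded versus ungraded distinction: one must verify that injective hulls, injective dimension, and local cohomology all behave well in the graded category and commute appropriately with homogeneous localization. Once this is in place, the proof strategy used in the ungraded case (cf.~\cite[18.2]{Matsumura1989}) transfers essentially verbatim.
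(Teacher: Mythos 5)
Your argument is correct and is essentially the classical one: the paper itself offers no proof here, only the citation to Matsumura 18.2, and the proof behind that citation is exactly your route (Gorenstein local $\Leftrightarrow$ finite injective dimension, localization preserves finite injective dimension via the structure and minimality of injective resolutions, then come back). The one thing worth making explicit is that passing between the paper's definition of graded Gorenstein (local cohomology concentrated in degree $\dim$ and isomorphic to a shift of $I_\m$) and finite graded injective dimension is itself graded local duality, a nontrivial classical input used in both directions of your reduction; it, together with the graded structure theory of $*$injective modules under homogeneous localization, should be cited (e.g.\ Bruns--Herzog \S 3.6) rather than asserted. With those references in place, the extraction of $\nu(\p)$ and the identification $H^i_\p(R_\p)=H^i_{\p R_\p}(R_\p)$ with $I_{\p R_\p}\cong I_\p$ are exactly as you say.
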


The rest of this subsection will be dedicated to proving the following result.

\begin{thm}\label{thm:localize}
 If $R$ is a coconnected graded Gorenstein ring with $R_0 = k$ a field, then the shift 
 function $\nu$ is constant, i.e., $\nu(\p)=\nu(\p')$ for all $\p,\p'\in\Spech(R)$.
\end{thm}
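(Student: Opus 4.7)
The plan is to identify the shift $\nu$ via the graded dualizing module of $R$ and exploit that this module is compatible with graded localization. Since $R$ is coconnected with $R_0 = k$ a field, every nonzero element of $R_0$ is a unit while every homogeneous element of negative degree lies in the proper ideal $R_{<0}$; consequently $\m := R_{<0}$ is the unique maximal homogeneous ideal of $R$, and $R = R_\m$ is itself graded local.

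For a graded local Gorenstein ring the shift $\nu(\m)$ can be read off the canonical module: combining the defining identity $H^{\dim R}_\m(R) \cong \Sigma^{\nu(\m)} I_\m$ with graded local duality $H^{\dim R}_\m(\omega_R) \cong I_\m$, and using the Gorenstein property, one obtains a graded isomorphism
\[
\omega_R \cong \Sigma^{-\nu(\m)} R.
\]
Now fix an arbitrary $\p \in \Spech(R)$. By the theorem recalled just above, $R_\p$ is again graded local Gorenstein. Since formation of the canonical module commutes with graded localization, $\omega_{R_\p} \cong (\omega_R)_\p \cong \Sigma^{-\nu(\m)} R_\p$, and applying graded local duality at $\p R_\p$ gives
\[
H^{\dim R_\p}_{\p R_\p}(R_\p) \cong \Sigma^{\nu(\m)} H^{\dim R_\p}_{\p R_\p}(\omega_{R_\p}) \cong \Sigma^{\nu(\m)} I_\p.
\]
Hence $\nu(\p) = \nu(\m)$ for every homogeneous prime $\p$, which is the asserted constancy.

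The main technical obstacle is to verify carefully the graded-algebraic facts invoked above, namely the shifted-free identification $\omega_R \cong \Sigma^{-\nu(\m)} R$ and the localization formula $(\omega_R)_\p \cong \omega_{R_\p}$; both are direct graded analogues of familiar ungraded results, but one must be attentive to degree conventions. A related subtlety, which the argument also resolves, is that $\nu(\p)$ is a priori only defined modulo the degrees of graded units in $R_\p$: when $R_\p$ is a periodic localization (for example, when $\p=0$ and $R$ is a graded polynomial ring with generators of negative degree), several distinct integers represent valid shifts for $\Sigma^{\nu(\p)} I_\p$, and the canonical-module identification singles out $\nu(\m)$ as a uniformly compatible choice.
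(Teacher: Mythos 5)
Your strategy---normalizing a dualizing module by $\omega \cong \Sigma^{-\nu(\m)}R$ and localizing it---is sound in outline, and it is really the Matlis-dual form of the paper's argument: the paper applies the Greenlees--Lyubeznik dual localization $\L_\p = D_\p((D_\m(-))_\p)$ to $H^{\dim R}_\m(R) \cong \Sigma^{\nu(\m)}I_\m$, and the first step $D_\m(\Sigma^{\nu(\m)}I_\m) \cong \Sigma^{-\nu(\m)}R$ is exactly your identification of the normalized canonical module, while Proposition~\ref{prop:duallocalization} ($\L_\p I_\m = I_\p$ and $\L_\p H^i_\m(M) \cong H^{i-\mathrm{coht}(\p)}_\p(M_\p)$) is the precise statement that this normalized module ``localizes correctly''.

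The gap is in how you justify that last point. You invoke ``graded local duality at $\p R_\p$'' in the normalized form $H^{\dim R_\p}_{\p R_\p}(\omega_{R_\p}) \cong I_\p$ together with $(\omega_R)_\p \cong \omega_{R_\p}$, and call these direct graded analogues of familiar facts. But for a non-maximal homogeneous prime $\p$, the ring $R_\p$ has degree-zero part a local ring which is not a field, it is not graded-complete, and (in the coconnected situation) it always contains homogeneous units of nonzero degree; consequently the graded canonical module of $R_\p$ is only defined up to shift, and standard graded local duality (stated for $R_0$ a field, or for complete graded local rings) does not apply off the shelf. In particular the bare assertion ``$\omega$ commutes with graded localization'' carries no information about shifts, and the shift is the entire content of the theorem. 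What your argument actually needs is the single normalized statement $H^{\dim R_\p}_{\p R_\p}\bigl((\omega_R)_\p\bigr) \cong I_\p$ with no shift, and establishing this is precisely the content of Proposition~\ref{prop:duallocalization} (i.e.\ \cite[2.5]{GL}); its proof is where the hypotheses enter, since coconnectedness and $R_0=k$ force $R$ to be graded local and $\m$-adically complete, so that graded Matlis duality gives $D_\m I_\m = R$. Note that your write-up never visibly uses these hypotheses, whereas Remark~\ref{rem-counterexample} shows that some hypothesis beyond ``graded local Gorenstein'' is needed for the paper's reading of the shift function; a complete proof must make explicit where they are used. Your closing observation about the shift ambiguity at periodic localizations is a good one, but it does not by itself supply the normalization of $\omega_{R_\p}$ that your duality step requires.
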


In order to prove the previous theorem, we must firstly give some necessary background. Recall that since local cohomology modules are torsion, we cannot directly localize them, and must instead use dual localization. 
\begin{defn}
Let $(R,\m)$ be a graded local ring, $\p\in\Spech(R)$ and $I_\p$
the injective hull of $R/\p$.
\begin{itemize}
\item The \emph{Matlis dual} of an $R_\p$-module $M$ is given by 
$D_\p M = \mrm{Hom}_{R_\p}(M, I_\p)$.
\item The \emph{dual localization} of an $R$-module $M$ is given by 
$\L_\p M = \mrm{Hom}_{R_\p}(\mrm{Hom}_R(M, I_\m)_\p, I_\p)$. In other words,
$\L_\p\colon \mod{R} \to \mod{R_\p}$ is defined by the composite \[\mod{R} \xrightarrow{D_\m} \mod{R}^{\mrm{op}} \xrightarrow{(-)_\p} \mod{R_\p}^{\mrm{op}} \xrightarrow{D_\p} \mod{R_\p}.\] \end{itemize}
\end{defn}

As Matlis duality is a crucial ingredient in proving many results about local 
cohomology, we recall some key facts here and refer the reader to~\cite[\S 3]{Huneke} or \cite[3.6.17]{BrunsHerzog} for more details. 

Consider a complete local ring $(R,\m)$.
\begin{itemize}
\item The Matlis dual functor $D_\m$ gives a bijection between artinian $R$-modules and finitely generated $R$-modules. 
\item The injective hull $I_\m$ and $R$ are Matlis 
dual; that is, $D_\m I_\m = R$.
\end{itemize}
Using these facts one obtains the following proposition.
\begin{prop}[{\cite[2.5]{GL}}]\label{prop:duallocalization}
Let $R$ be a coconnected graded noetherian ring with $R_0 = k$ a field, $M$ be a finitely generated $R$-module and $\p$ be a prime ideal of $R$. Then 
\[
\L_\p I_\m = I_\p \quad \mathrm{and} \quad
\L_\p H^i_\m(M) = H^{i - \mrm{coht}(\p)}_\p (M_\p).
\]
\end{prop}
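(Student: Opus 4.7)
The first identity is a direct definition-chase using graded Matlis duality. The coconnectivity of $R$ together with $R_0 = k$ a field ensures that $(R, \m)$ is graded-complete, so graded Matlis duality applies and gives $D_\m I_\m = \mrm{Hom}_R(I_\m, I_\m) \cong R$. Unwinding the definition of $\L_\p$ then yields
\[
\L_\p I_\m = D_\p\bigl((D_\m I_\m)_\p\bigr) = D_\p(R_\p) = \mrm{Hom}_{R_\p}(R_\p, I_\p) = I_\p.
\]

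For the second identity, my plan is to apply local duality twice --- once at $\m$ and once at $\p$ --- with the coheight shift arising from how the normalized dualizing complex transforms under localization. Under the standing hypotheses, $R$ admits a normalized dualizing complex $\omega^\bullet_R$, and graded local duality gives, for any finitely generated $M$,
\[
D_\m H^i_\m(M) \cong \mrm{Ext}^{-i}_R(M, \omega^\bullet_R).
\]
Since $M$ and the cohomology modules of $\omega^\bullet_R$ are finitely generated over the noetherian ring $R$, $\mrm{Ext}$ commutes with localization at $\p$, so
\[
\bigl(D_\m H^i_\m(M)\bigr)_\p \cong \mrm{Ext}^{-i}_{R_\p}\bigl(M_\p, (\omega^\bullet_R)_\p\bigr).
\]

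The crucial classical input is that $(\omega^\bullet_R)_\p$ is a dualizing complex for $R_\p$, but is \emph{not} the normalized one; it agrees with the normalized dualizing complex $\omega^\bullet_{R_\p}$ only after a shift by $\mrm{coht}(\p)$. This is precisely where the coheight enters the formula. Performing this shift, applying $D_\p$, and then running local duality for $R_\p$ in reverse to re-identify $D_\p \mrm{Ext}^{\bullet}_{R_\p}(M_\p, \omega^\bullet_{R_\p})$ with local cohomology $H^\bullet_\p(M_\p)$, produces the claimed identity
\[
\L_\p H^i_\m(M) \cong H^{i - \mrm{coht}(\p)}_\p(M_\p).
\]

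I expect the main obstacle to be the careful bookkeeping required to set up graded local duality for both $R$ and $R_\p$ simultaneously, and to track signs in the localization shift of the dualizing complex so that the coheight appears with the correct sign. The ring $R_\p$ is not itself a coconnected graded local ring with residue field in the original sense, so one must either pass to a suitable graded completion or use a hybrid approach: classical (ungraded) local duality at $R_\p$ combined with graded Matlis duality at $R$. The remaining ingredients --- graded Matlis duality over $(R,\m)$, compatibility of $\mrm{Ext}$ with localization of finitely generated modules, and the localization theorem for dualizing complexes --- are all classical and enter without difficulty.
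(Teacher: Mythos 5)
Your treatment of the first identity is exactly the paper's: both use that the hypotheses force $(R,\m)$ to be graded local and (graded) complete, so that $D_\m I_\m = R$, and then unwind $\L_\p I_\m = D_\p((D_\m I_\m)_\p) = D_\p(R_\p) = I_\p$. For the second identity the paper gives no argument at all --- it simply cites Greenlees--Lyubeznik~\cite[2.5]{GL} --- whereas you sketch a proof via graded local duality and the behaviour of the normalized dualizing complex under localization (the coheight shift coming from the codimension function). That sketch is sound and is in substance the standard proof of the cited result, so you are not taking a genuinely different route so much as reconstructing the proof behind the citation. The points you flag are indeed the ones needing care: one must work with graded ($^*$) injective hulls and graded Matlis duality to match the definition of $\L_\p$ and $D_\p$ used here; the duality at $\p$ should be used in the direction ``local cohomology is the Matlis dual of $\mathrm{Ext}$ against the dualizing complex,'' which holds for $R_\p$ without completeness (completeness is only needed to invert the duality, which you only do over the complete ring $R$ at $\m$); and $R_\p$ inherits a dualizing complex from $R$ by localization, with the normalization differing by $\mrm{coht}(\p)$, which is precisely where the index shift in $H^{i-\mrm{coht}(\p)}_\p(M_\p)$ comes from. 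With that bookkeeping carried out, your argument is complete and compatible with the statement as used in the paper.
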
  
\begin{proof}
Note that the assumptions force $R$ to be local with maximal ideal $\m$ given by the negative degree elements, and moreover that $R$ is $\m$-adically complete. The first isomorphism then follows from Matlis duality since 
\[
\L_\p I_\m=D_\p((D_\m I_\m)_\p)= D_\p(R_\p) = I_\p.
\]
The second isomorphism is~\cite[2.5]{GL}.
\end{proof}

We now have all the necessary background to prove Theorem~\ref{thm:localize}.

\begin{proof}[Proof of Theorem~\ref{thm:localize}]
Fix $R$ to be coconnected with $R_0 = k$ a field. Note that $R$ is then local with maximal ideal $\m$ given by the negative degree elements. Let $\p$ be a prime ideal of $R$. We now show that $\nu(\p) = \nu(\m)$. Since $R$ is Gorenstein we have $H_\m^{\mrm{dim}(R)}(R) = \Sigma^{\nu(\m)}I_\m$. Applying dual localization $\L_\p$ to this together with Proposition~\ref{prop:duallocalization}, we have
\[H_\p^{\mrm{dim}(R_\p)}(R_\p)=\L_\p H_\m^{\mrm{dim}(R)}(R) = \L_\p(\Sigma^{\nu(\m)}I_\m) = \Sigma^{\nu(\m)}I_\p\]
as required.
\end{proof}

\begin{rem}\label{rem-counterexample}
For a general graded local Gorenstein ring $R$, the shift function $\nu$ of $R$ need not be constant in the sense of Theorem~\ref{thm:localize}. For example, one may take $R = \Z_{(p)}[x]$ with $x$ in degree $2$. One easily calculates that $\nu((p,x)) = -2$ whereas $\nu((p)) = 0$.
\end{rem}

\subsection{Arithmetic of shifts}
In this section, we calculate the shift functions for some important types of graded Gorenstein rings. 

\begin{prop}\label{prop:Gorquotient}
Let $R$ be a graded local ring, and $x_1,\ldots,x_n$ be a regular sequence in $R$. Write $d_i$ for the degree of $x_i$. Then $R/(x_1,\ldots ,x_n)$ is Gorenstein if and only if $R$ is Gorenstein. Moreover the shifts on the maximal ideals correspond as follows: $R/(x_1,\ldots ,x_n)$ has shift $\nu$ if and only if $R$ has shift $\nu - \sum_{i=1}^n d_i$.
\end{prop}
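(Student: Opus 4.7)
The plan is to induct on $n$. For the inductive step, if $x_1,\ldots,x_n$ is a regular sequence in $R$, then the images of $x_2,\ldots,x_n$ form a regular sequence in $R/(x_1)$ of the same degrees, so applying the $n=1$ case to $x_1\in R$ and the inductive hypothesis to $R/(x_1)$ stacks the shift corrections to $\sum_{i=1}^n d_i$. Thus the real content is the base case $n=1$.

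For $n=1$, let $x\in\m$ be a regular element of degree $d$ and set $\bar R = R/(x)$, $\bar\m = \m/(x)$; note $\mrm{dim}(\bar R) = \mrm{dim}(R)-1$ by Krull's Hauptidealsatz, and $\m$- and $\bar\m$-torsion agree on $\bar R$-modules so $H^i_\m(\bar R) = H^i_{\bar\m}(\bar R)$. I would then apply $H^*_\m(-)$ to the graded short exact sequence
\[
0 \to \Sigma^{d} R \xrightarrow{\,x\,} R \to \bar R \to 0
\]
and analyze the resulting long exact sequence. Assuming $R$ is graded Gorenstein with shift $\nu(\m)$, the vanishing of $H^i_\m(R)$ outside $i=\mrm{dim}(R)$ collapses it to
\[
0 \to H^{\mrm{dim}(\bar R)}_{\bar\m}(\bar R) \to \Sigma^{\nu(\m)+d}I_\m \xrightarrow{\,x\,} \Sigma^{\nu(\m)}I_\m \to H^{\mrm{dim}(R)}_{\bar\m}(\bar R) \to 0,
\]
with all other local cohomology groups of $\bar R$ vanishing.

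The key lemma, which I would derive by applying $\Hom_R(-,I_\m)$ to $0 \to R \xrightarrow{x} R \to \bar R \to 0$ and using that $I_\m$ is injective, is that multiplication by $x$ is surjective on $I_\m$ and that its kernel $(0:_{I_\m}x)$ is canonically isomorphic, as a graded $\bar R$-module, to the injective hull $I_{\bar\m}$ of $R/\m$ over $\bar R$. This forces $H^{\mrm{dim}(R)}_{\bar\m}(\bar R) = 0$ and $H^{\mrm{dim}(\bar R)}_{\bar\m}(\bar R) \cong \Sigma^{\nu(\m)+d} I_{\bar\m}$, so $\bar R$ is Gorenstein with shift $\nu(\m) + d$. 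Equivalently: the shift of $R/(x)$ equals that of $R$ plus $d$, as claimed.

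For the reverse implication, I would invoke the classical graded version of the statement that if $\bar R$ is Gorenstein and $x$ is a non-zerodivisor, then so is $R$ (the graded analogue of \cite[3.1.19]{BrunsHerzog}); the forward direction then pins the shift of $R$ down to $\nu - d$. The main obstacle is the identification $(0:_{I_\m}x) \cong I_{\bar\m}$ together with surjectivity of $x$ on $I_\m$. Once this is established, the rest is straightforward bookkeeping with suspensions in the long exact sequence.
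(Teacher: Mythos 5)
Your proposal is correct and takes essentially the same route as the paper: the paper simply cites Huneke's ungraded argument and notes that the short exact sequence $0 \to \Sigma^{|x|}R \xrightarrow{x} R \to R/x \to 0$ together with the long exact sequence in local cohomology gives the graded statement with the shift, which is exactly what you carry out (including the standard Matlis-theoretic facts that $x$ acts surjectively on $I_\m$ with kernel the injective hull over $R/x$, and the classical converse for quotients by a regular element). Your suspension bookkeeping also agrees with the paper's conventions, e.g.\ it reproduces the shift $-2$ for $k[v]$ with $|v|=2$.
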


\begin{proof} 
This can be found in~\cite[4.6]{Huneke}. The cited reference proves the claim for ungraded rings but the same argument works for graded rings; that is, for a regular element $x$ we have the short exact sequence \begin{equation}\label{eq-ses}
0 \to \Sigma^{|x|}R \xrightarrow{x} R \to R/x \to 0
\end{equation}
and the claim follows by applying the long exact sequence in local cohomology. 
\end{proof}

\begin{rem} 
 One could hope that the relation between the shift functions of $R$ and $R/(x_1,\ldots ,x_n)$ holds for all primes and not for just for maximal 
 ideals as stated in the previous proposition. This is false: take $\Z_{(p)}[x]$ with $|x|=2$. The element $x$ is regular with quotient $\Z_{(p)}$ which is Gorenstein of shift $0$. 
 However $\Z_{(p)}[x]$ does not have constant shift $-2$ as shown in 
 Remark~\ref{rem-counterexample}. We can see what goes wrong in the proof of Proposition~\ref{prop:Gorquotient} by localizing the exact sequence (\ref{eq-ses}) at a prime ideal 
 $\p$. If $x\not \in \p$ then $R_\p/x=0$ so we cannot argue by induction. 
\end{rem}
 
\begin{cor}\label{cor-poly-shift-gor}
 Let $R$ be a graded Gorenstein ring with shift function $\nu$ and consider the graded ring $R\llbracket x_1, \ldots ,x_n\rrbracket$ where each $x_i$ has even non-zero degree $d_i$ of the same parity. Then $R\llbracket x_1,\ldots ,x_n\rrbracket$ is Gorenstein with shift function $\sigma(\m) = \nu(\m \cap R) - \sum_{i=1}^n d_i$ for all $\m\in \mrm{MaxSpec}^\mrm{h}(R\llbracket x_1,\ldots ,x_n\rrbracket )$. 
 \end{cor}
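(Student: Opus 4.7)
The plan is to reduce to the graded Gorenstein property of $R$ via Proposition~\ref{prop:Gorquotient}, applied to the regular sequence $x_1,\ldots,x_n$ in the localization $S_\m$, where $S:=R\llbracket x_1,\ldots,x_n\rrbracket$ and $\m$ is a graded maximal ideal of $S$.

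The main step, which I expect to be the principal obstacle, is to show that every graded maximal ideal $\m$ of $S$ contains each $x_i$. If some $x_i\notin\m$, then its image in the graded field $S/\m$ is a nonzero homogeneous element, hence a unit, so we may lift its inverse to an element $y\in S$ of degree $-d_i$ with $1-x_iy\in\m$. Writing $y=\sum_\alpha f_\alpha x^\alpha$ as a multivariable power series, $x_iy=\sum_\alpha f_\alpha x^{\alpha+e_i}$ visibly has zero constant coefficient, so $1-x_iy$ has constant coefficient $1\in R^\times$. By the standard criterion for units in a power series ring, $1-x_iy$ is then a unit in $S$, contradicting $1-x_iy\in\m\subsetneq S$.

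Setting $\p:=\m\cap R$, the previous step forces $\m=\p S+(x_1,\ldots,x_n)$ and the inclusion $R\hookrightarrow S$ descends to an isomorphism $R/\p\cong S/\m$ of graded fields, so $\p$ is a graded maximal ideal of $R$. Localizing at $\m$ yields a graded local ring $S_\m$ in which $x_1,\ldots,x_n$ remains a regular sequence, and we obtain $S_\m/(x_1,\ldots,x_n)\cong R_\p$ as graded rings.

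Finally I would apply Proposition~\ref{prop:Gorquotient} to $S_\m$ and this regular sequence: since $R$ is graded Gorenstein, $R_\p$ is graded Gorenstein with shift $\nu(\p)$, and therefore $S_\m$ is graded Gorenstein with shift $\nu(\p)-\sum_{i=1}^n d_i$. Letting $\m$ range over the graded maximal ideals of $S$ gives the claimed formula $\sigma(\m)=\nu(\m\cap R)-\sum_{i=1}^n d_i$.
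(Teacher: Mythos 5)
Your proposal is correct and follows essentially the same route as the paper: show that every graded maximal ideal $\m$ of $R\llbracket x_1,\ldots,x_n\rrbracket$ contains $(x_1,\ldots,x_n)$, so that $\m=(\m\cap R, x_1,\ldots,x_n)$, and then obtain the shift by applying Proposition~\ref{prop:Gorquotient} to the regular sequence $x_1,\ldots,x_n$ in the localization at $\m$. If anything, your handling of the key containment step is more complete than the paper's (which just asserts that otherwise $(x_1,\ldots,x_n,\m)$ would be a larger prime): the unit-criterion argument shows directly that a graded maximal ideal missing some $x_i$ would contain a unit, and working with $S_\m/(x_1,\ldots,x_n)\cong R_\p$ sidesteps the paper's identification of $S_\m$ with $R_{\m\cap R}\llbracket x_1,\ldots,x_n\rrbracket$.
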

 
\begin{proof}
 The fact that $R\llbracket x_1,\ldots ,x_n\rrbracket$ is Gorenstein follows easily 
 from Proposition~\ref{prop:Gorquotient}.
 Consider a maximal ideal $\m\in \Spech(R\llbracket x_1, \ldots, x_n\rrbracket )$.  
 We claim that $(x_1,\ldots, x_n) \subset \m$. 
 Suppose that this is not the case and note that $(x_1, \ldots,x_n, \m)$ is again 
 prime in $R\llbracket x_1,\ldots,x_n \rrbracket$. 
 This contradicts the maximality of $\m$ so $(x_1,\ldots,x_n) \subset \m$ and 
 $\m=(\m \cap R,x_1,\ldots,x_n)$. To calculate the shift we must consider $R\llbracket x_1,\ldots ,x_n\rrbracket_\m = R_{\m \cap R}\llbracket x_1,\ldots,x_n\rrbracket$.
 By Proposition~\ref{prop:Gorquotient} this is Gorenstein of shift 
 $\sigma(\m)=\nu(\m \cap R)-\sum_{i=1}^n d_i$ as required.  
\end{proof}
 
 Finally, we consider the case when the graded ring is periodic.
 \begin{prop}\label{prop:periodic}
Consider $R = r[\beta^{\pm 1}]$, where $r$ is an ungraded noetherian ring, and $\beta$ has non-zero even degree.
If $r$ is Gorenstein, then $R$ is Gorenstein of shift $0$.
\end{prop}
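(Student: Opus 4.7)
The plan is to verify the graded Gorenstein condition at each maximal homogeneous ideal of $R = r[\beta^{\pm 1}]$. Write $d = |\beta|$. Because $\beta$ is a homogeneous unit of positive degree, every homogeneous prime of $R$ has the form $\p R$ for some $\p \in \mrm{Spec}(r)$, and the maximal homogeneous ones are precisely $\m = \m_0 R$ for $\m_0$ a maximal ideal of $r$. For such $\m$ we have $R_\m = r_{\m_0}[\beta^{\pm 1}]$, and the same analysis of its homogeneous primes yields $\dim(R_\m) = \dim(r_{\m_0})$.

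Next I would compute $H^*_{\m R_\m}(R_\m)$ via flat base change along $r_{\m_0} \hookrightarrow R_\m$. A generating set of $\m_0 r_{\m_0}$ also generates $\m R_\m$, so the \v{C}ech complex computing $H^*_{\m R_\m}(R_\m)$ is obtained from the one for $H^*_{\m_0}(r_{\m_0})$ by tensoring with the flat $r_{\m_0}$-algebra $R_\m$:
\[
H^i_{\m R_\m}(R_\m) \;\cong\; H^i_{\m_0}(r_{\m_0}) \otimes_{r_{\m_0}} R_\m.
\]
Since $r_{\m_0}$ is an ungraded Gorenstein ring (of shift $0$), the right-hand side vanishes outside degree $\dim(r_{\m_0}) = \dim(R_\m)$, and in that degree it equals $I_{\m_0}[\beta^{\pm 1}]$ with $I_{\m_0}$ placed in internal degree $0$.

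It remains to identify $I_{\m_0}[\beta^{\pm 1}]$ with the graded injective hull $I_\m$ of the residue field $R_\m/\m \cong (r_{\m_0}/\m_0)[\beta^{\pm 1}]$, without any internal suspension. The cleanest way I see is to use that because $\beta$ is a unit of degree $d$, the category of graded $R_\m$-modules is equivalent to $\prod_{i=0}^{d-1} \mod{r_{\m_0}}$ via projection onto the degrees $0, 1, \ldots, d-1$, with $\beta$-periodicity recovering all other degrees. Under this equivalence both $R_\m/\m$ and $I_{\m_0}[\beta^{\pm 1}]$ are concentrated in the degree-$0$ component, where they become $r_{\m_0}/\m_0$ and $I_{\m_0}$ respectively. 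Injective hulls in a product of abelian categories are taken componentwise, and the Gorenstein hypothesis on $r$ identifies $I_{\m_0}$ as the injective hull of $r_{\m_0}/\m_0$, so $I_\m \cong I_{\m_0}[\beta^{\pm 1}]$ with no suspension. Combining the three steps yields $H^{\dim R_\m}_\m(R_\m) \cong I_\m$, so the shift at $\m$ is $0$. The main obstacle is the last step: tracking the gradings carefully so that no internal shift is introduced when identifying the graded injective hull.
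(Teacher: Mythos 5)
Your proof is correct and follows essentially the same route as the paper: flat base change gives $H^*_{\m}(R_\m)\cong H^*_{\m_0}(r_{\m_0})\otimes_{r_{\m_0}}R_\m = H^*_{\m_0}(r_{\m_0})[\beta^{\pm 1}]$, concentrated in degree $\dim(r_{\m_0})=\dim(R_\m)$, and then one identifies $I_{\m_0}[\beta^{\pm 1}]$ with the graded injective hull $I_\m$. Your periodicity/product-category argument for that last identification is a careful elaboration of a step the paper simply asserts, and your restriction to maximal homogeneous ideals (versus the paper's check at all homogeneous primes) is harmless given the definition of graded Gorenstein.
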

\begin{proof}
Firstly, note that $\Spech(R) = \mrm{Spec}(r)$. For all prime ideals $\p$ of $r$ we will show that $R_\p$
is Gorenstein of shift $0$. As $r[\beta^{\pm 1}]$ is a flat $r$-module, the change of base property of local cohomology gives that $H^*_{\p}(R_\p)=H^*_\p(r_\p)[\beta^{\pm 1}]$. Therefore $H^*_{\p}(R_\p)$ is concentrated in $\ast = \mrm{dim}(r_\p) = \mrm{dim}(R_\p)$. Writing $I^r_\p$ and $I^R_\p$ for the injective hulls of $r/\p$ and $R/\p$ respectively, we have
\[
H_\m^{\mrm{dim}(R_\p)}(R_\p) =  H^{\mrm{dim}(r_\p)}_\m(r_\p)[\beta^{\pm 1}] = I^r_\p[\beta^{\pm 1}] = I^R_\p.
\]
\end{proof}

\section{Local Gorenstein duality for ring spectra}

In this section we recall the definition of local Gorenstein duality for ring spectra from~\cite{BHV, BCHV}. 
We will also recall some methods of proving local Gorenstein duality, in particular, the ascent theorem of Barthel-Heard-Valenzuela.

\subsection{Local cohomology and localization}
Firstly we recall the definition of local cohomology for ring spectra together 
with some key properties. 
We refer the reader to~\cite{GMcompletions},~\cite[Appendices A and B]{Greenlees2018},~\cite{BIK} and~\cite[\S 3]{BHV} for more details.

Let $R$ be a commutative noetherian ring spectrum and consider a homogeneous ideal 
$\a = (x_1, \ldots, x_n)$ of $\pi_*R$. Mirroring the construction of the stable Koszul complex in algebra, we define the stable Koszul spectrum by 
\[
K_\infty(\a) = \bigotimes_{i=1}^n \mrm{fib}(R \to R[1/x_i])
\]
where $R[1/x_i] = \mrm{colim}(R\xrightarrow{\cdot x_i} R \xrightarrow{\cdot x_i} \cdots)$. One may show that the homotopy type of $K_\infty(\a)$ is independent of the choice of generators. In fact it depends only on the radical of $\a$. We then define $\Gamma_\a M = K_\infty(\a) \otimes_R M$ for any $R$-module $M$, and call this the $\a$-local cohomology functor. This terminology is justified by the existence of the strongly convergent spectral sequence~\cite[3.2]{GMcompletions} 
\begin{equation}\label{eq:lcss} E^2_{s,t} = H_\a^{-s}(\pi_*M)_t \Rightarrow \pi_{t-s}(\Gamma_\a M)\end{equation}
from the $\a$-local cohomology of $\pi_*M$.

In a similar way, given a prime ideal $\p$ of $\pi_*R$, one may define a localization functor $L_\p$ with the property that $\pi_*(L_\p M) = (\pi_*M)_\p$ for all $R$-modules $M$. For this reason we will often write $M_\p$ for $L_\p M$. Both $L_\p$ and $\Gamma_\p$ are smashing and hence $\Gamma_\p (M_\p) \simeq (\Gamma_\p M)_\p.$

\begin{rem}
We warn the reader that~\cite{BCHV, BHV, BIK} use $\Gamma_\p$ to denote the composite $\Gamma_\p L_\p$ which we do not do. Instead we choose to keep the notation for the local cohomology and localization functors distinct.
\end{rem}

\subsection{Local Gorenstein duality and consequences}
In this section we recall the definition of local Gorenstein duality from~\cite{BHV}.

First we recall the construction of Brown–Comenetz spectra which play the role of 
injectives in the derived world.

\begin{lem}\label{lem:matlis}
Let $R$ be a commutative noetherian ring spectrum and let $I$ be an injective $\pi_*R$-module. 
There exists an $R$-module $T_R(I)$ satisfying:
\begin{itemize}
\item[(a)] the assignment $T_R(-)$ is functorial in $I$;
\item[(b)] for all $R$-modules $M$, we have $\pi_*\Hom_R(M, T_R(I)) \cong \Hom_{\pi_*R}(\pi_*M, I)$;
\item[(c)] $\pi_*T_R(I) \cong I$;
\item[(d)] if an $R$-module $M$ is such that $\pi_*M \cong I$, then $M \simeq T_R(I)$.
\end{itemize}
\end{lem}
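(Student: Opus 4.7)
The plan is to construct $T_R(I)$ by Brown representability in the compactly generated stable $\infty$-category $\mod{R}$. Concretely, I would define the contravariant functor
\[
F \colon \mod{R}^{\mrm{op}} \to \mrm{Ab}, \qquad F(M) = \Hom_{\pi_*R}(\pi_*M, I)_0,
\]
where the subscript denotes the degree zero component of the graded hom. The first task is to check that $F$ is a cohomological functor. Since $\pi_*$ sends cofiber sequences of $R$-modules to long exact sequences of graded $\pi_*R$-modules and $I$ is injective, $F$ sends cofiber sequences to short exact sequences; since $\pi_*$ preserves coproducts and $\Hom(-,I)$ converts sums into products, $F$ sends coproducts to products. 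As $\mod{R}$ is presentable, stable, and compactly generated by $R$ itself, Brown representability produces an $R$-module $T_R(I)$ and a natural isomorphism $\pi_0 \Hom_R(M, T_R(I)) \cong F(M)$.

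The four properties then follow from this representability in turn. For (a): an $R$-module map $I \to I'$ induces a natural transformation between $F_I$ and $F_{I'}$ and hence, by Yoneda, a map $T_R(I) \to T_R(I')$ of representing objects, functorially in $I$. For (b): replacing $M$ by $\Sigma^{-n}M$ in the defining isomorphism and using $\pi_*(\Sigma^{-n}M) = \Sigma^{-n}\pi_*M$ gives
\[
\pi_n \Hom_R(M, T_R(I)) \cong \pi_0 \Hom_R(\Sigma^{-n}M, T_R(I)) \cong \Hom_{\pi_*R}(\pi_*M, I)_n,
\]
and collecting these over all $n \in \Z$ yields the graded isomorphism in (b). Naturality of this identification with respect to multiplication by elements of $\pi_*R$ on $M$ upgrades it to an isomorphism of graded $\pi_*R$-modules. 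For (c): specialising (b) to $M = R$ gives $\pi_* T_R(I) \cong \Hom_{\pi_*R}(\pi_*R, I) = I$.

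For (d): suppose $\alpha \colon \pi_*M \xrightarrow{\cong} I$ is an isomorphism of graded $\pi_*R$-modules. Viewing $\alpha$ as an element of $\Hom_{\pi_*R}(\pi_*M, I)_0 \cong \pi_0 \Hom_R(M, T_R(I))$, it lifts to a map $\widetilde{\alpha} \colon M \to T_R(I)$. A direct Yoneda computation (trace the identity of $T_R(I)$ through the defining natural isomorphism) shows that under the identification in (c), $\pi_*(\widetilde{\alpha}) = \alpha$, so $\widetilde\alpha$ is a $\pi_*$-isomorphism and hence an equivalence of $R$-modules.

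The only real substance is checking Brown representability and ensuring that the identification in (b) respects the graded $\pi_*R$-module structures on both sides; everything else is formal Yoneda manipulation. I expect the compatibility of the representability-produced isomorphism with the $\pi_*R$-action to be the main item that requires care, but it follows routinely from naturality in $M$.
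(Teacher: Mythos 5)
Your proposal is correct and follows essentially the same route as the paper, which simply invokes Brown representability (citing \cite[\S 4.1]{BHV}); you have just spelled out the standard construction and the formal Yoneda deductions in detail. No gaps: the injectivity of $I$ is exactly what makes the functor cohomological, and your verification of (d) by tracing the universal element is the standard argument.
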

\begin{proof}
This is an application of Brown representability, see for instance~\cite[\S 4.1]{BHV}.
\end{proof}

\begin{defn}
Given a commutative noetherian ring spectrum $R$ and $\p \in \Spech(\pi_*R)$, we write 
$\I_\p=T_R(I_\p)$ where $I_\p$ is the injective hull of $\pi_*R/\p$.
\end{defn}

Based on the classical local Gorenstein duality statement in algebra, Barthel-Heard-Valenzuela 
made a definition of local Gorenstein duality for ring spectra~\cite{BHV}, 
generalizing the Gorenstein duality definition of Dwyer-Greenlees-Iyengar~\cite{DGI} 
and the version observed in modular representation theory by Benson-Greenlees~\cite{BGduality}. 
Here we give a slight modification of their definition.
\begin{defn}
  Let $R$ be a commutative noetherian ring spectrum. We say that 
  $R$ has \emph{local Gorenstein duality} if there exists a function 
  $\nu\colon \Spech(\pi_*R) \to \Z$ and an equivalence 
  \[
  \Gamma_\p R_\p \simeq \Sigma^{\nu(\p)} \I_\p
  \] 
  for all $\p \in \Spech(\pi_*R)$. We call $\nu$ the \emph{shift function} of $R$.
\end{defn}

\begin{rem}\label{rem:compare}
The previous definition is a slight generalization of the definition given in~\cite[4.21]{BHV}, 
where the shift function was of the form $\nu(\p) = a + \mrm{coht}(\p)$ for a fixed $a$, for each prime $\p$. 
In the cited paper, the authors call the local Gorenstein duality property, the absolute Gorenstein condition. 
In~\cite{BCHV} they renamed this local Gorenstein duality.
\end{rem}

We now recall some consequences of local Gorenstein duality on the homotopy groups from Greenlees-Lyubeznik~\cite{GL}. 
The following result shows that if $R$ satisfies local Gorenstein duality, then $\pi_*(R_\p) = (\pi_*R)_\p$ is a ring with a local 
cohomology theorem in the sense of~\cite{GL}, for each $\p \in \Spech(\pi_*R)$. 
\begin{thm}\label{thm:LCT}
Let $R$ be a commutative noetherian ring spectrum which satisfies local Gorenstein duality of shift $\nu$. 
For each $\p \in \Spech(\pi_*R)$, there is a strongly convergent spectral sequence
 \[E^2_{s,t} = H_\p^{-s}((\pi_*R)_\p)_t \Rightarrow (I_\p)_{s+t-\nu(\p)}.\]
\end{thm}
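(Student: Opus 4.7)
The plan is to specialize the local cohomology spectral sequence~(\ref{eq:lcss}) to the $R$-module $M = R_\p$ and the ideal $\a = \p$, and then use the local Gorenstein duality hypothesis to identify the abutment. First I would note that $R_\p$ is an $R$-module whose homotopy groups $\pi_*(R_\p) = (\pi_*R)_\p$ form a graded noetherian ring, since the localization of a graded noetherian ring at a homogeneous prime is again graded noetherian. Thus the spectral sequence~(\ref{eq:lcss}) applies to give a strongly convergent spectral sequence whose $E^2$-page is exactly
\[
E^2_{s,t} = H_\p^{-s}(\pi_*R_\p)_t = H_\p^{-s}((\pi_*R)_\p)_t
\]
and whose abutment is $\pi_*(\Gamma_\p R_\p)$ with the grading convention of~(\ref{eq:lcss}).

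Next I would identify the abutment. By the local Gorenstein duality hypothesis we have an equivalence of $R$-modules $\Gamma_\p R_\p \simeq \Sigma^{\nu(\p)} \I_\p$. Combining this with the identification $\pi_*\I_\p \cong I_\p$ provided by Lemma~\ref{lem:matlis}(c) and with the cohomological suspension convention $\pi_n(\Sigma^i X) = \pi_{n-i} X$, the homotopy groups of $\Gamma_\p R_\p$ are a shifted copy of $I_\p$. Plugging this into the abutment of~(\ref{eq:lcss}) gives the target graded module $(I_\p)_{s+t-\nu(\p)}$ stated in the theorem. Strong convergence is inherited from the strong convergence of~(\ref{eq:lcss}).

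There is essentially no obstacle here: the result is a direct unpacking of the definition of local Gorenstein duality in light of the existence of the local cohomology spectral sequence, and is the homotopical shadow of the classical fact that Gorenstein duality yields a local cohomology spectral sequence converging to the injective hull. The only minor verifications are that $R_\p$ has noetherian homotopy (so that~(\ref{eq:lcss}) applies) and that the Brown--Comenetz construction of Lemma~\ref{lem:matlis} realises $\pi_*$ of the abutment as the graded injective hull $I_\p$. One could view this theorem as the statement that local Gorenstein duality is precisely the input required to run the Greenlees--Lyubeznik machine of~\cite{GL} in the spectral setting.
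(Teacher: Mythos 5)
Your proposal is correct and follows essentially the same route as the paper: take $M = R_\p$ in the spectral sequence~(\ref{eq:lcss}), then identify the abutment via the equivalence $\Gamma_\p R_\p \simeq \Sigma^{\nu(\p)}\I_\p$ and Lemma~\ref{lem:matlis}(c). The extra remarks about noetherianity of $(\pi_*R)_\p$ are harmless but not needed, since~(\ref{eq:lcss}) only requires $R$ noetherian and $M$ an arbitrary $R$-module.
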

\begin{proof}
Taking $M = R_\p$ in the spectral sequence (\ref{eq:lcss}), we obtain a strongly convergent spectral sequence 
\[E^2_{s,t} = H_\p^{-s}((\pi_*R)_\p)_t \Rightarrow \pi_{s+t}(\Gamma_\p R_\p).\]
Since $R$ is assumed to satisfy local Gorenstein duality of shift $\nu$, we have 
$\Gamma_\p R_\p \simeq \Sigma^{\nu(\p)} \I_\p$ and the claim then follows by Lemma~\ref{lem:matlis}(c).
\end{proof}

By Grothendieck vanishing, the spectral sequence in the previous theorem collapses surprisingly often, yielding many structural implications for $\pi_*R$. We refer the reader to~\cite{GL} for more consequences and details and we satisfy ourselves with recalling two of the most striking.
\begin{prop}[{\cite[5.3, 7.4]{GL}}]\label{prop:coeffconsequences}
Let $R$ be a commutative noetherian ring spectrum which has local Gorenstein duality. 
\begin{itemize}
\item[(a)] Then $\pi_*R$ is Gorenstein if and only if it is Cohen-Macaulay.
\item[(b)] Moreover, $\pi_*R$ is generically Gorenstein; that is, each localization at a minimal prime ideal is Gorenstein. 
\end{itemize}
\end{prop}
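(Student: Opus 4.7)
My plan is to derive both parts directly from the strongly convergent local cohomology spectral sequence of Theorem~\ref{thm:LCT}:
\[E^2_{s,t} = H_\p^{-s}((\pi_*R)_\p)_t \Rightarrow (I_\p)_{s+t-\nu(\p)}.\]
In both cases the strategy will be the same: show that under the hypothesis on $\p$ the spectral sequence is forced to collapse at $E^2$ by a Grothendieck-style vanishing, and then read the target $\Sigma^{\nu(\p)}I_\p$ off as an honest isomorphism of graded modules, which gives the graded Gorenstein condition at $\p$.

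For part (a), one implication is classical: a graded Gorenstein local ring is always Cohen-Macaulay. For the converse I would assume $\pi_*R$ is Cohen-Macaulay, fix $\p \in \Spech(\pi_*R)$, and set $d := \dim((\pi_*R)_\p)$. Cohen-Macaulayness means $H_\p^i((\pi_*R)_\p) = 0$ for $i \neq d$, so the $E^2$-page of the spectral sequence is concentrated on the single row $s = -d$ and the sequence degenerates. The resulting abutment yields an isomorphism
\[H_\p^d((\pi_*R)_\p) \cong \Sigma^{\nu(\p)}I_\p,\]
which is precisely the graded Gorenstein duality condition at $\p$. Since this holds at every $\p$, the ring $\pi_*R$ is Gorenstein.

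For part (b), I would take $\p$ to be a minimal prime of $\pi_*R$. Then $(\pi_*R)_\p$ is a zero-dimensional noetherian local ring: its maximal ideal $\p(\pi_*R)_\p$ is nilpotent, so every element is $\p$-torsion and $H_\p^0((\pi_*R)_\p) = (\pi_*R)_\p$, while all higher local cohomology vanishes. The spectral sequence collapses trivially on the row $s = 0$, producing $(\pi_*R)_\p \cong \Sigma^{\nu(\p)}I_\p$, the Gorenstein condition in dimension $0$. The only obstacle I anticipate is careful bookkeeping of internal degrees and of the shift $\nu(\p)$; there is no genuine difficulty, since everything reduces to a single-row collapse of the Greenlees--May spectral sequence combined with the standard algebraic characterizations of Cohen-Macaulay and of zero-dimensional Gorenstein local rings.
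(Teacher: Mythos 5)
Your argument is correct and is exactly the mechanism the paper intends: the paper itself simply cites Greenlees--Lyubeznik~\cite{GL}, whose proofs of 5.3 and 7.4 are precisely this Grothendieck-vanishing collapse of the local cohomology spectral sequence, which here (Theorem~\ref{thm:LCT}) is available at every prime $\p$ directly, so no dual localization step is needed. The only quibble is the bookkeeping you already flagged: with the paper's suspension conventions the single-row collapse in (a) gives $H^d_\p((\pi_*R)_\p)\cong\Sigma^{\nu(\p)+d}I_\p$ rather than $\Sigma^{\nu(\p)}I_\p$, which of course does not affect the Gorenstein conclusion.
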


\subsection{Proving local Gorenstein duality}
We now recall the key methods of proving when ring spectra satisfy local Gorenstein duality. In particular, we recall the ascent theorem for local Gorenstein duality from~\cite{BHV}. In Section~\ref{sec:descent} we will prove a descent theorem for local Gorenstein duality.

\begin{defn}
  We say that a commutative noetherian ring spectrum $R$ is
  \emph{algebraically Gorenstein with shift function $\nu$} if $\pi_*R$ is a graded 
  Gorenstein ring with shift function $\nu$. 
\end{defn}

\begin{lem}\label{lem:alggor}
If $R$ is algebraically Gorenstein with shift function $\nu$, then $R$ has local Gorenstein duality with shift function defined by $\sigma(\p) = \nu(\p) - \mrm{ht}(\p)$.
\end{lem}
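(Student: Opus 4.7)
The plan is to run the local cohomology spectral sequence (\ref{eq:lcss}) with $M = R_\p$ and exploit the Gorenstein hypothesis on $\pi_*R$ to make it collapse.

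First I would note that localization at $\p$ is smashing, and in particular $\pi_*(R_\p) = (\pi_*R)_\p$. Then the spectral sequence (\ref{eq:lcss}) specializes to a strongly convergent spectral sequence
\[
E^2_{s,t} = H_\p^{-s}((\pi_*R)_\p)_t \Rightarrow \pi_{s+t}(\Gamma_\p R_\p).
\]
Since $R$ is algebraically Gorenstein with shift function $\nu$, the graded ring $\pi_*R$ is graded Gorenstein, so $(\pi_*R)_\p$ is local Gorenstein of dimension $\mrm{ht}(\p)$, and $H_\p^{-s}((\pi_*R)_\p)$ is concentrated in the single codegree $-s = -\mrm{ht}(\p)$, where it equals $\Sigma^{\nu(\p)} I_\p$.

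Because only one column is nonzero, the spectral sequence collapses at the $E^2$-page with no extension problems, yielding an isomorphism of graded $\pi_*R$-modules
\[
\pi_n(\Gamma_\p R_\p) \cong (\Sigma^{\nu(\p)} I_\p)_{n+\mrm{ht}(\p)} = (I_\p)_{n + \mrm{ht}(\p) - \nu(\p)}.
\]
Equivalently, setting $\sigma(\p) = \nu(\p) - \mrm{ht}(\p)$, we have $\pi_*(\Sigma^{-\sigma(\p)}\Gamma_\p R_\p) \cong I_\p$.

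Since $I_\p$ is an injective $\pi_*R$-module, Lemma~\ref{lem:matlis}(d) upgrades this isomorphism of homotopy groups to an equivalence of $R$-modules $\Sigma^{-\sigma(\p)}\Gamma_\p R_\p \simeq T_R(I_\p) = \I_\p$, i.e., $\Gamma_\p R_\p \simeq \Sigma^{\sigma(\p)}\I_\p$, which is local Gorenstein duality with shift $\sigma(\p) = \nu(\p) - \mrm{ht}(\p)$ as required. The only subtle point is verifying that the spectral sequence really does collapse cleanly and applying Lemma~\ref{lem:matlis}(d), but both are immediate once one has identified the correct shift.
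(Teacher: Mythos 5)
Your proof is correct and follows exactly the paper's argument: run the local cohomology spectral sequence~(\ref{eq:lcss}) with $M = R_\p$, use the graded Gorenstein hypothesis to concentrate the $E^2$-page in the single column $-s=\mrm{ht}(\p)$ where it is $\Sigma^{\nu(\p)}I_\p$, and identify the resulting homotopy via Lemma~\ref{lem:matlis}(d). The bookkeeping of the shift $\sigma(\p)=\nu(\p)-\mrm{ht}(\p)$ matches the paper's suspension conventions, so there is nothing to add.
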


\begin{proof}
The strongly convergent spectral sequence~(\ref{eq:lcss}) with $M=R_\p$
\[E^2_{s,t} = H_\p^{-s}(\pi_*R_\p)_t \Rightarrow \pi_{s+t}(\Gamma_\p R_\p)\] collapses to show that $R$ has local Gorenstein duality 
with shift function $\sigma(\p) = \nu(\p) - \mrm{ht}(\p)$. 
\end{proof}

\begin{defn}
Let $f\colon R \to S$ be a map of commutative ring spectra. 
\begin{itemize}
\item We say that $f$ is \emph{finite} if $S$ is a compact $R$-module.
\item We say that $f$ is \emph{relatively Gorenstein of shift $a$} (for some $a \in \Z$) if there is 
an equivalence $\Hom_R(S,R) \simeq \Sigma^a S$ of $S$-modules.
\end{itemize}
\end{defn}

Let us now give some interesting examples of finite and relatively Gorenstein maps. We will return to some of these examples in Section~\ref{sec:descent}.
\begin{ex}\label{ex-finite-rel-gor-map} \leavevmode
\begin{itemize}
\item[(i)] The complexification map $f\colon KO \to KU$ is finite by Wood's theorem and relatively Gorenstein of shift $-2$ by~\cite[19.1]{Greenlees2018}. The same holds if we replace the $K$-theory spectra with their connective counterparts. 
\item[(ii)] There is a ring map $f\colon \mrm{tmf}_{(3)} \to \mrm{tmf}_1(2)_{(3)}$ to the ring of topological modular forms with indicated level structure. Mathew~\cite[4.15]{Mathewtmf} showed that there is a finite complex $X_3$ and an equivalence of $\mrm{tmf}_{(3)}$-modules $\mrm{tmf}_{(3)} \wedge X_3 \simeq \mrm{tmf}_1(2)_{(3)}$, showing that $f$ is finite. Using this Greenlees~\cite[19.2]{Greenlees2018} showed that $f$ is relatively Gorenstein of shift $-8$. 
\item[(iii)] In a similar way, there is a ring map $f\colon \mrm{tmf}_{(2)} \to \mrm{tmf}_1(3)_{(2)}$. 
Mathew~\cite[4.12]{Mathewtmf} showed that there is a finite complex $DA(1)$ and an equivalence of $\mrm{tmf}_{(2)}$-modules $\mrm{tmf}_{(2)} \wedge DA(1)\simeq \mrm{tmf}_1(3)_{(2)}$, showing that $f$ is finite. Greenlees then showed that $f$ is relatively Gorenstein of shift $-12$~\cite[19.2]{Greenlees2018}. 
\end{itemize}
\end{ex}

Given a map $f\colon R \to S$ of commutative ring spectra, we write $f^*\colon \mod{S} \to \mod{R}$ for the restriction of scalars functor. The restriction of scalars $f^*$ has a left adjoint given by the extension of scalars functor $f_* = S \otimes_R -$, and a right adjoint given by the coextension of scalars functor $f_! = \Hom_R(S,-)$.

There is the following ascent result for relatively Gorenstein maps.
\begin{lem}\label{lem:relGorcomposite}
Let $f\colon R \to S$ and $g\colon S \to T$ be maps of commutative ring spectra and suppose that $f\colon R \to S$ is relatively Gorenstein of shift $a$. 
Then $gf\colon R \to T$ is relatively Gorenstein if and only if $g\colon S \to T$ is relatively Gorenstein with shifts related by $\mathrm{shift}(gf) = \mathrm{shift}(g) + a.$
\end{lem}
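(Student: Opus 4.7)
The proof will rest on a single change-of-rings identity, together with the compatibility of $\Hom_S(T,-)$ with suspensions.

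My plan is to exploit the coextension--restriction adjunction for $f$: since $f_! = \Hom_R(S,-)$ is right adjoint to the restriction $f^*\colon \mod{S} \to \mod{R}$, for any $S$-module $M$ and $R$-module $N$ we have a natural equivalence
\[
\Hom_R(f^*M, N) \simeq \Hom_S(M, \Hom_R(S,N)).
\]
Applying this with $M = T$ (regarded as an $S$-module via $g$, so that $f^*T$ is $T$ regarded as an $R$-module via $gf$) and $N = R$, we obtain an equivalence of $T$-modules
\[
\Hom_R(T, R) \simeq \Hom_S(T, \Hom_R(S,R)).
\]

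Next I will feed the hypothesis into this identity. Since $f$ is relatively Gorenstein of shift $a$, we have an equivalence $\Hom_R(S,R) \simeq \Sigma^a S$ of $S$-modules. Substituting and using that $\Hom_S(T,-)$ commutes with suspension in the second variable yields
\[
\Hom_R(T, R) \simeq \Hom_S(T, \Sigma^a S) \simeq \Sigma^a \Hom_S(T, S)
\]
as $T$-modules.

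From this single equivalence the lemma is immediate in both directions. If $g$ is relatively Gorenstein of shift $b$, so $\Hom_S(T,S) \simeq \Sigma^b T$, then the display gives $\Hom_R(T,R) \simeq \Sigma^{a+b} T$, so $gf$ is relatively Gorenstein of shift $a+b$. Conversely, if $gf$ is relatively Gorenstein of shift $c$, so $\Hom_R(T,R) \simeq \Sigma^c T$, then desuspending the display by $a$ gives $\Hom_S(T,S) \simeq \Sigma^{c-a} T$, showing $g$ is relatively Gorenstein of shift $c-a$. There is no genuine obstacle here; the only thing to be slightly careful about is that the equivalence from the adjunction is an equivalence of $T$-modules (not merely spectra), which follows because the adjunction is enriched over $S$-modules and the $T$-action comes from the $S$-module structure on the first variable.
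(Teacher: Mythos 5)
Your argument is correct and is essentially identical to the paper's own proof: both rest on the restriction--coextension adjunction giving $\Hom_R(T,R) \simeq \Hom_S(T,\Hom_R(S,R)) \simeq \Sigma^a \Hom_S(T,S)$, from which both directions of the equivalence and the shift relation follow immediately. Your extra remark about the equivalence being one of $T$-modules is a fine point of care, but it does not change the argument.
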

\begin{proof}
By the restriction-coextension of scalars adjunction, we have
\[\Hom_R(T,R) \simeq \Hom_S(T, \Hom_R(S,R)) \simeq \Sigma^a\Hom_S(T,S)\]
and the result follows from this.
\end{proof}

\begin{lem}\label{lem:ambi}
Let $f\colon R \to S$ be a finite map of commutative ring spectra which is relatively Gorenstein of shift $a$. There is a natural equivalence $f_!M \simeq \Sigma^a f_*M$ for all $R$-modules $M$. 
\end{lem}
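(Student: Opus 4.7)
The plan is to combine the two hypotheses via a standard dualizability argument. Since $f\colon R \to S$ is finite, $S$ is a compact, and therefore dualizable, object of $\mod{R}$. Thus the natural ``assembly'' map
\[
\alpha_M\colon \Hom_R(S, R) \otimes_R M \longrightarrow \Hom_R(S, M),
\]
which is induced by the composition pairing, is an equivalence of $S$-modules for every $R$-module $M$. (Here the $S$-module structure on both sides is induced by the $S$-action on the source copy of $S$.)

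To verify this, I would observe that, as functors of $M$, both sides preserve arbitrary colimits: the left-hand side tautologically does, and the right-hand side does because $S$ is compact in the stable $\infty$-category $\mod{R}$. The map $\alpha_M$ is manifestly an equivalence when $M = R$. Since $R$ is a compact generator of $\mod{R}$, so that $\mrm{Loc}(R) = \mod{R}$, it follows that $\alpha_M$ is an equivalence for every $M$, and that this equivalence is natural in $M$.

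Combining this with the relatively Gorenstein hypothesis $\Hom_R(S, R) \simeq \Sigma^a S$ of $S$-modules, we conclude
\[
f_!M = \Hom_R(S, M) \;\simeq\; \Hom_R(S, R) \otimes_R M \;\simeq\; \Sigma^a S \otimes_R M \;=\; \Sigma^a f_*M,
\]
and the equivalence is natural in $M$ since each step is. The argument is essentially formal, so there is no real obstacle: the content of the lemma is simply the statement that the relatively Gorenstein condition, together with the dualizability of $S$ provided by finiteness, upgrades the isomorphism on $R$ to one on every $R$-module.
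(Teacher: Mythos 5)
Your proof is correct and follows essentially the same route as the paper: both use compactness of $S$ over $R$ to identify $\Hom_R(S,M)$ with $\Hom_R(S,R)\otimes_R M$ and then apply the relatively Gorenstein equivalence $\Hom_R(S,R)\simeq\Sigma^a S$. You merely spell out the standard justification of the assembly map being an equivalence (colimit-preservation plus checking on $M=R$), which the paper takes for granted.
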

\begin{proof}
If $f\colon R \to S$ is relatively Gorenstein of shift $a$, and $S$ is compact as an $R$-module, we have \[f_!M = \Hom_R(S, M) \simeq \Hom_R(S,R) \otimes_R M \simeq \Sigma^a S \otimes_R M = \Sigma^a f_*M\] which gives the claim.
\end{proof}

Given a map $f\colon R \to S$ of commutative noetherian ring spectra, we write $\res_f\colon \Spech(\pi_*S) \to \Spech(\pi_*R)$ for the induced map. 
The following is the ascent theorem for local Gorenstein duality. In Section~\ref{sec:descent} we will prove a descent theorem for local Gorenstein duality.

\begin{thm}[{\cite[4.27, 4.32]{BHV}}]\label{thm:Gorascent} 
  Let $f\colon R \to S$ be a finite map of commutative noetherian ring spectra, 
  which is relatively Gorenstein of shift $a\in\Z$. 
  If $R$ has local Gorenstein duality of shift $\nu$, then $S$ has local Gorenstein 
  duality with shift function $\sigma$ given by $\sigma(\q) = \nu(\res_f(\q)) - a$ 
  for all $\q\in\Spech(\pi_*(S))$.
\end{thm}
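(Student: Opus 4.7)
The plan is to apply the coextension-of-scalars functor $f_! = \Hom_R(S,-)$ to the local Gorenstein duality equivalence for $R$, use ambidexterity to rewrite the outcome in terms of $S$-modules, and then localize at $\q$ to extract the duality statement at that prime.

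Fix $\q \in \Spech(\pi_*S)$ and set $\p = \res_f(\q)$. Applying $f_!$ to $\Gamma_\p R_\p \simeq \Sigma^{\nu(\p)}\I_\p$ gives
\[
\Hom_R(S, \Gamma_\p R_\p) \simeq \Sigma^{\nu(\p)}\Hom_R(S, \I_\p).
\]
By Lemma~\ref{lem:ambi} the left-hand side is equivalent to $\Sigma^a(S \otimes_R \Gamma_\p R_\p)$, which, since $\Gamma_\p$ is smashing, equals $\Sigma^a \Gamma_\p S_\p$. Hence we obtain an equivalence of $S$-modules
\[
\Gamma_\p S_\p \simeq \Sigma^{\nu(\p) - a}\Hom_R(S, \I_\p).
\]
It therefore suffices to show that localizing both sides at $\q$ produces $\Gamma_\q S_\q$ on the left and $\I_\q$ on the right.

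For the left-hand side, since $\p = \res_f(\q)$ we have $(S_\p)_\q = S_\q$ and hence $(\Gamma_\p S_\p)_\q = \Gamma_\p S_\q$. By incomparability for the finite map $f$, the ideal $\q \pi_*S_\q$ is the unique prime of $\pi_*S_\q$ containing $\p \pi_*S_\q$, so these ideals share the same radical and therefore $\Gamma_\p S_\q = \Gamma_\q S_\q$.

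For the right-hand side, the claim is $\Hom_R(S, \I_\p)_\q \simeq \I_\q$. By Lemma~\ref{lem:matlis} it suffices to show that $\Hom_{\pi_*R}(\pi_*S, I_\p)_\q \cong I_\q$ as $(\pi_*S)_\q$-modules. The module $\Hom_{\pi_*R}(\pi_*S, I_\p)$ is injective as a $\pi_*S$-module by change of rings, so after localizing at $\q$ it decomposes as a direct sum of copies of $I_\q$. A socle computation via Hom-tensor adjunction gives
\[
\Hom_{(\pi_*S)_\q}(\kappa(\q), \Hom_{\pi_*R}(\pi_*S, I_\p)_\q) \cong \Hom_{\pi_*R}(\kappa(\q), I_\p) \cong \Hom_{\kappa(\p)}(\kappa(\q), \kappa(\p)),
\]
where the second isomorphism uses that $\p$ acts trivially on $\kappa(\q)$ so any such Hom lands in the socle $\kappa(\p)$ of $I_\p$. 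Since $f$ is finite, $\kappa(\q)$ is a finite extension of $\kappa(\p)$, so this final Hom is one-dimensional over $\kappa(\q)$, showing that exactly one copy of $I_\q$ appears. Combining the two computations yields $\Gamma_\q S_\q \simeq \Sigma^{\nu(\p) - a}\I_\q$, as required.

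The main obstacle is the Matlis-type algebraic step identifying $\Hom_{\pi_*R}(\pi_*S, I_\p)_\q$ with $I_\q$: it requires a careful analysis of how the Hom into an injective hull over $\pi_*R$ localizes as an injective $\pi_*S$-module.
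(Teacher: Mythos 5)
Your argument is correct, but it isolates the prime $\q$ by a genuinely different mechanism than the paper. The paper's proof (following \cite{BHV}) first decomposes globally, $f_!(\I_\p)\simeq\bigoplus_{\q'\in U}\I_{\q'}$ and $f_*(\Gamma_\p R_\p)\simeq\bigoplus_{\q'\in U}\Gamma_{\q'}S_{\q'}$ where $U=\res_f^{-1}(\p)$, and then extracts the $\q$-summand by tensoring with $\Gamma_\q S_\q$ and a support argument, using ambidexterity (Lemma~\ref{lem:ambi}) to trade $f_*$ for $f_!$. You instead apply the smashing localization $L_\q$ to the single equivalence $\Sigma^a f_*\Gamma_\p R_\p\simeq\Sigma^{\nu(\p)}f_!\I_\p$, identify the left-hand side by base change of the smashing functors together with the radical computation that $\q(\pi_*S)_\q$ is the radical of $\p(\pi_*S)_\q$ (incomparability for the finite map), and re-prove by hand the local piece of the injective decomposition that the paper simply cites. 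What your route buys is that you never need the full direct-sum decompositions (no appeal to the analogue of \cite[7.10]{BIK2} or to the global form of \cite[4.9]{BHV}); what it costs is that the Matlis step must be done carefully, and there one point needs more than ``injective by change of rings'': a priori the localization at $\q$ of an injective $\pi_*S$-module is a direct sum of (shifted) copies of $E(\pi_*S/\q')$ for various primes $\q'\subseteq\q$, and your socle computation only counts the copies of $I_\q$. To rule out smaller primes, observe that $\Hom_{\pi_*R}(\pi_*S,I_\p)$ is $\p$-power-torsion and $\p$-local as a $\pi_*R$-module, so every associated prime of it lies over $\p$; incomparability then leaves only $\q$ surviving localization at $\q$. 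With that standard addition (and the observation that no graded shift intervenes in the residue-field duality $\Hom_{\kappa(\p)}(\kappa(\q),\kappa(\p))\cong\kappa(\q)$, since the extension of graded residue fields is finite), your proof is complete and yields exactly $\Gamma_\q S_\q\simeq\Sigma^{\nu(\res_f(\q))-a}\I_\q$.
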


\begin{proof}
We give a sketch of the proof due to Barthel-Heard-Valenzuela~\cite[4.27]{BHV} since our conventions for Gorenstein shifts are different, and refer the reader there for more details. Fix a prime ideal $\p$ of $\pi_*R$ and write $U$ for the preimage of $\p$ under $\res_f$. Then one has decompositions 
\[
f_!(\I_\p) \simeq \bigoplus_{\q' \in U}\I_{\q'} \qquad \mathrm{and} \qquad f_*(\Gamma_\p R_\p)= \bigoplus_{\q' \in U}\Gamma_{\q'} S_{\q'}.
\]
For $\q \in U$, we have \[
\Gamma_\q S_\q \simeq  \Gamma_\q S_\q \otimes_S \bigoplus_{\q' \in U}\Gamma_{\q'} S_{\q'} \simeq \Gamma_\q S_\q \otimes_S f_*(\Gamma_\p R_\p) 
\simeq \Sigma^{-a} \Gamma_\q S_\q \otimes_S f_!(\Gamma_\p R_\p)
\] 
using a support argument and the relative Gorenstein condition. Since $R$ has local Gorenstein duality we have 
\[f_!(\Gamma_\p R_\p) \simeq \Sigma^{\nu(\p)}f_!(\I_\p) \simeq \Sigma^{\nu(\p)}\bigoplus_{\q' \in U} \I_{\q'}.\]
Combining these two equivalences we have
\[
\Gamma_\q S_\q \simeq \Sigma^{\nu(\p)-a}\Gamma_\q S_\q \otimes_S \bigoplus_{\q' \in U} \I_{\q'} \simeq \Sigma^{\nu(\p)-a}\I_\q
\] 
where the last equivalence follows since $\I_\q$ is $\q$-torsion and $\q$-local.
\end{proof}

\section{Venkov's theorem}
In this section we extend Venkov's theorem~\cite{Venkov} to complex oriented ring spectra. The special case where the ring $R$ is discrete and noetherian is well-known, see for instance~\cite[3.10.1]{Bensonbook} for a topological proof and~\cite[4.2.1]{Bensonbook} for an algebraic one. The case where the ring spectrum $R$ is complex orientable is probably also well-known but since we do not know of a good reference we decided to include the details. 

The key feature of complex oriented ring spectra which makes Venkov's theorem work 
in this generality is that the cohomology of $BU(n)$ is a power series ring.
\begin{lem}[{\cite[4.3.3]{Kochman}}]\label{lem:cx-orientable}
  Let $R$ be a complex orientable ring spectrum. 
  Then for all $n\geq 1$, we have $R^*(BU(n))=R^*\llbracket c_1,\ldots, c_n\rrbracket$ 
  on the Chern classes $c_i$ of degree $-2i$. 
\end{lem}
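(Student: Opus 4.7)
The plan is to use the Atiyah--Hirzebruch spectral sequence together with the complex orientation, which provides lifts of the classical Chern classes.

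First I would recall that ordinary integral cohomology gives $H^*(BU(n);\Z) = \Z[c_1^{\mrm{cl}}, \ldots, c_n^{\mrm{cl}}]$ with $c_i^{\mrm{cl}}$ in cohomological degree $2i$, so it is concentrated in even degree. Consider the Atiyah--Hirzebruch spectral sequence
\[
E_2^{p,q} = H^p(BU(n); \pi_{-q}R) \Rightarrow R^{p+q}(BU(n)).
\]
Since $H^p(BU(n);\Z)$ vanishes for odd $p$, the $E_2$-page is concentrated in even $p$, and the differential $d_r$ shifts $p$ by $r$, so by an induction on $r$ all differentials vanish. Hence the spectral sequence collapses and one obtains an associated graded
\[
\mrm{gr}\, R^*(BU(n)) \cong R^*[c_1^{\mrm{cl}}, \ldots, c_n^{\mrm{cl}}].
\]

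Next I would invoke the complex orientation. The choice of $\mbb{E}_1$-complex orientation equips $R$ with a theory of Chern classes for complex vector bundles: applying Grothendieck's inductive definition via the projective bundle formula to the universal rank $n$ bundle on $BU(n)$ produces classes $c_i \in R^*(BU(n))$ in cohomological degree $2i$ (i.e.\ $-2i$ in the paper's homological conventions), whose images in the associated graded recover the classical Chern classes $c_i^{\mrm{cl}}$. These lifts assemble into a map of graded $R^*$-algebras
\[
\phi\colon R^*\llbracket c_1, \ldots, c_n\rrbracket \to R^*(BU(n)).
\]

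Finally I would conclude that $\phi$ is an isomorphism. By construction $\phi$ induces an isomorphism on the associated gradeds with respect to the AHSS filtration, so the main remaining issue is completeness: one needs $R^*(BU(n))$ to be complete with respect to the skeletal filtration so that power series (not just polynomials) converge. This follows from the strong convergence of the AHSS, using that $BU(n)$ has finite skeleta with only finitely many cells below each dimension and the appropriate $\mrm{lim}^1$ vanishes. The main obstacle, as usual, is ensuring the Chern class lifts genuinely exist as global classes (rather than merely at the level of associated graded) and checking that the completeness of the filtration is correctly handled; both steps are standard consequences of the complex orientation and the cell structure of $BU(n)$.
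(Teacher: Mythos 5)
The paper does not prove this lemma itself; it simply cites Kochman, and your Atiyah--Hirzebruch argument is indeed the standard route to that result. However, as written there is a genuine gap in your collapse step. You argue that because $H^*(BU(n);\Z)$ is concentrated in even degrees, the $E_2$-page is concentrated in even $p$ and hence all differentials vanish ``by induction on $r$.'' Parity of $p$ alone only rules out the differentials $d_r$ with $r$ odd: $d_r$ shifts $p$ by $r$, so for even $r$ (already $d_2\colon E_2^{p,q}\to E_2^{p+2,q-1}$) both source and target lie in even columns and nothing is excluded. The point is that a complex orientable ring spectrum need not have $\pi_*R$ concentrated in even degrees (e.g.\ square-zero extensions of $MU$), so you cannot upgrade the evenness in $p$ to evenness in total degree, which is what the usual ``everything is even, differentials shift total degree by one'' argument needs.

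The collapse really does use the complex orientation, not just the cell structure of $BU(n)$: one first produces the global classes (the orientation class on $\mathbb{CP}^\infty$ and then the Chern classes $c_i\in R^{2i}(BU(n))$ via the projective bundle theorem, exactly as in your second step), observes that their leading terms are the classical classes $c_i^{\mrm{cl}}$ and hence that these generators of $E_2$ are permanent cycles, notes that the coefficient column is permanent (being in the image of $R^*(\mathrm{pt})\to R^*(BU(n))$), and then uses multiplicativity of the spectral sequence to conclude that every element of $E_2\cong H^*(BU(n);\Z)\otimes\pi_*R$ is a permanent cycle, whence degeneration. So your proof is repairable essentially by reordering: construct the Chern classes first and deduce collapse from them, rather than asserting collapse on degree grounds and introducing the Chern classes afterwards. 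Your treatment of the remaining completeness issue (strong convergence, $\lim^1$-vanishing over the skeletal filtration, power series rather than polynomials) is the right thing to say and is standard once degeneration is in place.
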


We recall the following from~\cite[10.2, 10.5]{Greenlees2018}, see 
also~\cite[2.1]{Mathew2015}.

\begin{lem}\label{lem-regular-compact}
  Let $R$ be a commutative ring spectrum and consider an $R$-module $M$. 
  \begin{itemize}
  \item[(a)] If $\pi_*R$ is noetherian and $M$ is a compact $R$-module, 
  then $\pi_*M$ is a finitely generated $\pi_*R$-module.
  \item[(b)] If $\pi_*R$ has finite global dimension and $\pi_*M$ is finitely generated 
  over $\pi_*R$, then $M$ is a compact $R$-module.
  \end{itemize}
\end{lem}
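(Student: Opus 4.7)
Both parts should follow from a general principle: in $\mod{R}$, the compact objects form the thick subcategory $\mrm{Thick}(R)$ generated by $R$. Thus part (a) amounts to showing that having finitely generated homotopy is a thick property, and part (b) amounts to cellularly building $M$ from $R$ in finitely many stages.

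\emph{Plan for (a).} Let $\mathcal{A} \subset \mod{R}$ be the full subcategory of modules $N$ with $\pi_*N$ a finitely generated $\pi_*R$-module. I would check $\mathcal{A}$ is thick: closure under shifts is immediate since $\pi_*(\Sigma N) = \pi_{*-1}N$; closure under retracts is immediate since retracts of finitely generated modules are finitely generated; and closure under cofiber sequences uses noetherianness in exactly the standard way, namely if $A \to B \to C$ is a cofiber sequence with $\pi_*A, \pi_*B$ finitely generated, the long exact sequence places $\pi_nC$ in a short exact sequence whose outer terms are a quotient of $\pi_nB$ and a submodule of $\pi_{n-1}A$, and these are finitely generated by the noetherian hypothesis. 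Since $R \in \mathcal{A}$, we get $\mrm{Thick}(R) \subseteq \mathcal{A}$, and the compact $R$-modules are precisely $\mrm{Thick}(R)$.

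\emph{Plan for (b).} Let $n$ be the global dimension of $\pi_*R$. I would build $M$ in $\mrm{Thick}(R)$ by a finite cellular construction mimicking a minimal free resolution. Choose finitely many homogeneous generators of $\pi_*M$ to obtain a map $F_0 \to M$, where $F_0$ is a finite wedge of suspensions of $R$, inducing a surjection on $\pi_*$. Let $M_1 = \mathrm{fib}(F_0 \to M)$; the long exact sequence identifies $\pi_*M_1$ with $\ker(\pi_*F_0 \to \pi_*M)$, a submodule of a finitely generated module, hence finitely generated (using noetherianness implicit in having finite global dimension as we use it). Repeating, one obtains a tower
\[
M_n \to F_{n-1} \to \cdots \to F_1 \to F_0 \to M
\]
with each $F_i$ a finite wedge of shifts of $R$ and with $\pi_*M_i$ the $i$-th syzygy of $\pi_*M$. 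After $n$ steps, $\pi_*M_n$ is projective, so that $M_n$ is a retract of a finite wedge of shifts of $R$ and therefore compact. Unwinding the cofiber sequences $M_{i+1} \to F_i \to M_i$ expresses $M$ as a finite iterated cofiber of compact modules, hence $M \in \mrm{Thick}(R)$.

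\emph{The main obstacle} is in part (b): verifying that the iterated fibers $M_i$ remain compact enough to continue the construction, and in particular that $\pi_*M_n$ actually ends up projective after finitely many stages. This is where the finite global dimension hypothesis is essential, as it both guarantees termination of the syzygy construction and gives projectivity of the final syzygy; the upshot is that $M$ admits a finite cell structure over $R$, so it lies in $\mrm{Thick}(R)$ and is therefore compact.
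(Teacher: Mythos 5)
Your strategy is exactly the standard one: the paper itself does not prove this lemma but cites Greenlees and Mathew, and their proofs are precisely your two arguments (thickness of the subcategory of modules with finitely generated homotopy, using that compact $R$-modules are $\mrm{Thick}(R)$; and a finite cell construction mimicking a free resolution, terminated by finite global dimension). Part (a) is complete as written.

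Two points in (b) need repair. First, your parenthetical ``noetherianness implicit in having finite global dimension'' is not a real implication: finite global dimension does not imply noetherian (or even coherent), and without such a hypothesis the syzygy $\ker(\pi_*F_0 \to \pi_*M)$ need not be finitely generated, so the induction cannot continue with finite wedges and the argument breaks. The cited sources (and every application in this paper) have $\pi_*R$ both noetherian and of finite global dimension, so the fix is simply to carry the noetherian hypothesis of (a) into (b) explicitly rather than to derive it; as literally stated, (b) is not reachable by this method. Second, the step ``$\pi_*M_n$ is projective, so $M_n$ is a retract of a finite wedge of shifts of $R$'' deserves justification: choose a finite wedge $F_n$ with a $\pi_*$-surjection $p\colon F_n \to M_n$ (possible since $\pi_*M_n$ is finitely generated); projectivity gives an algebraic splitting $s\colon \pi_*M_n \to \pi_*F_n$, and because $\pi_*M_n$ is projective the universal coefficient (Ext) spectral sequence collapses, so $s$ is realized by a map of $R$-modules $\sigma\colon M_n \to F_n$. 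Then $p\circ\sigma$ induces the identity on homotopy, hence is an equivalence, exhibiting $M_n$ as a retract of $F_n$. With these two repairs your proof agrees with the argument behind the references the paper quotes.
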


The proof of the following lemma is essentially the same as that of~\cite[3.10.1]{Bensonbook}.

\begin{lem}\label{lem:venkov1}
  Let $R$ be a complex orientable commutative noetherian ring spectrum. 
  Consider a compact Lie group $G$ and a faithful 
  representation $i\colon G \to U(n)$. 
  Then $R^*(BG)$ is a finitely generated $R^*(BU(n))$-module.  
\end{lem}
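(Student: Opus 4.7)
The plan is to mimic the classical proof of Venkov's theorem via the Atiyah-Hirzebruch-Serre spectral sequence. First, I would consider the Borel fibration
\[
U(n)/G \longrightarrow BG \longrightarrow BU(n)
\]
induced by the embedding $i\colon G \hookrightarrow U(n)$. Since $G$ is a closed subgroup of the compact Lie group $U(n)$, the quotient $U(n)/G$ is a compact smooth manifold, hence a finite CW complex. Thus $\Sigma^\infty_+ (U(n)/G)$ is dualizable in spectra, so $F((U(n)/G)_+, R)$ is a compact $R$-module; applying Lemma~\ref{lem-regular-compact}(a) then shows that $R^*(U(n)/G)$ is a finitely generated $R^*$-module.

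Next, I would run the Atiyah-Hirzebruch-Serre spectral sequence
\[
E_2^{p,q} = H^p(BU(n); R^q(U(n)/G)) \Longrightarrow R^{p+q}(BG),
\]
which is a module spectral sequence over the Atiyah-Hirzebruch spectral sequence computing $R^*(BU(n))$. Since $BU(n)$ is simply connected, the local coefficient system is trivial. Using that $H^*(BU(n); \Z) = \Z[c_1, \ldots, c_n]$ is torsion-free, the universal coefficient theorem identifies
\[
E_2 \cong R^*[c_1, \ldots, c_n] \otimes_{R^*} R^*(U(n)/G),
\]
which is finitely generated as a module over $R^*[c_1, \ldots, c_n]$ via the $R^*$-module generators of $R^*(U(n)/G)$. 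As $R^*$ is noetherian by assumption, so is $R^*[c_1, \ldots, c_n]$, and hence the subquotient $E_\infty$ remains finitely generated over it.

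Finally, I would upgrade this to finite generation of $R^*(BG)$ as a module over the complete ring $R^*(BU(n)) = R^*\llbracket c_1, \ldots, c_n\rrbracket$ supplied by Lemma~\ref{lem:cx-orientable}. The skeletal filtration of $BU(n)$ induces a complete decreasing filtration on $R^*(BG)$ whose associated graded is $E_\infty$, so a standard filtered-module lifting argument (lift a finite set of $E_\infty$-generators to $R^*(BG)$ and invoke completeness to show they span) yields finite generation over $R^*(BU(n))$. I expect the main technical obstacle to be this completion step, together with the careful verification that the spectral sequence is genuinely a module spectral sequence over the (pre-completion) polynomial ring $R^*[c_1, \ldots, c_n]$; both are standard but warrant attention.
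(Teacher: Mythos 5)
Your proposal follows essentially the same route as the paper's proof: the fibration $U(n)/G \to BG \to BU(n)$, finite generation of $R^*(U(n)/G)$ over $R^*$ via Lemma~\ref{lem-regular-compact}(a), and the Atiyah--Hirzebruch spectral sequence of this fibration viewed as a module over that of $BU(n)$, with noetherianness (Hilbert basis) controlling the $E_\infty$-page. If anything, your last step is more careful than the paper's, which passes from $E_\infty$ to $R^*(BG)$ by asserting a finite filtration, whereas your completeness-and-lifting argument correctly handles the infinite filtration that arises when $R^*$ is unbounded (e.g.\ periodic).
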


\begin{proof}
  Consider the following diagram
  \[
  \begin{tikzcd}
  * \arrow[r] & BU(n) \arrow[r, "1"] & BU(n)\\
  U(n)/G \arrow[u] \arrow[r] & BG \arrow[r, "Bi"] \arrow[u, "Bi"] 
  & BU(n) \arrow[u, "1"]
  \end{tikzcd}
  \]
  whose rows are fibre sequences.
  We apply the Atiyah–Hirzebruch spectral sequence on each row 
  to obtain a morphism of spectral sequences
  \[
  \begin{tikzcd}
  E_2^{*,*}=H^*(BU(n); R^*) \arrow[d]\arrow[Rightarrow, r] & R^*(BU(n)) 
  \arrow[d] \\
  E_2^{*,*}=H^*(BU(n); R^*(U(n)/G)) \arrow[Rightarrow, r] & R^*(BG).
  \end{tikzcd}
  \]
  Since $BU(n)$ admits the structure of a finite CW-complex, the 
  spectral sequences converge strongly to the target. 
  Note also that $H^*(BU(n); R^*)=R^*\llbracket c_1, \ldots, c_n\rrbracket= R^*(BU(n))$ 
  by Lemma~\ref{lem:cx-orientable} so the lower spectral sequence 
  is a spectral sequence of $R^*(BU(n))$-modules. 
  
  Note that $U(n)/G$ admits the structure of a finite CW-complex. 
  Thus by Lemma~\ref{lem-regular-compact}(a), we know that $R^*(U(n)/G)$ is 
  a finitely generated $R^*$-module.
  
  The $E_2$-page of the spectral sequence is
  \[
  H^*(BU(n); R^*(U(n)/G))= R^*(U(n)/G) \otimes_{R^*} H^*(BU(n); R^*)=
  R^*(U(n)/G)\llbracket c_1,\ldots, c_n\rrbracket
  \]
  so by the discussion in the previous paragraph we see that 
  the $E_2$-page is a finitely generated $R^*(BU(n))$-module. 
  
  The $E_\infty$-page is a sub-quotient of the $E_2$-page and 
  $R^*(BU(n))$ is noetherian by the Hilbert basis theorem. It 
  follows that $E_\infty$ is a finitely generated $R^*(BU(n))$-module. 
  Therefore $R^*(BG)$ has a finite filtration of finitely generated 
  $R^*(BU(n))$-modules so is itself finitely generated over 
  $R^*(BU(n))$.
\end{proof}

\begin{thm}[Venkov]\label{thm:venkov2}
  Let $R$ be a complex orientable commutative noetherian ring spectrum. 
  Consider a compact Lie group $G$ and a subgroup 
  $H\leq G$. Then $R^*(BH)$ is a finitely generated $R^*(BG)$-module.  
\end{thm}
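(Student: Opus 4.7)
The plan is to reduce the theorem to the case already handled by Lemma~\ref{lem:venkov1}, where the target group is a unitary group.

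First, I would invoke the Peter--Weyl theorem to choose a faithful unitary representation $i\colon G \hookrightarrow U(n)$ for some $n \geq 1$. Since $H$ is a closed subgroup of $G$ (as per the conventions stated in the introduction), the composite $H \hookrightarrow G \xrightarrow{i} U(n)$ is a faithful representation of $H$, which is itself a compact Lie group. I am now in a position to apply Lemma~\ref{lem:venkov1} simultaneously to both $G$ and $H$, concluding that $R^*(BG)$ and $R^*(BH)$ are both finitely generated as modules over $R^*(BU(n))$.

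To finish, I would observe that the natural restriction maps fit into a commutative triangle of ring maps
\[
R^*(BU(n)) \longrightarrow R^*(BG) \longrightarrow R^*(BH),
\]
so the $R^*(BU(n))$-module structure on $R^*(BH)$ factors through $R^*(BG)$. Consequently, any finite generating set for $R^*(BH)$ as an $R^*(BU(n))$-module also generates it as an $R^*(BG)$-module, which yields the claim.

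There is no genuine obstacle here: once Lemma~\ref{lem:venkov1} and the Peter--Weyl theorem are in hand, the result is immediate. The only subtlety worth flagging is verifying that the relevant $U(n)$-representation of $H$ is faithful, which is automatic since $H$ sits inside $G$ and $i$ is injective.
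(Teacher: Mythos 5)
Your proposal is correct and is essentially the paper's own argument: choose a faithful representation $G\hookrightarrow U(n)$, apply Lemma~\ref{lem:venkov1} to the faithful composite $H\to G\to U(n)$, and then restrict scalars along $R^*(BU(n))\to R^*(BG)$. The only (harmless) redundancy is that you also invoke the lemma for $G$ itself, which is not needed.
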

\begin{proof}
 Choose a faithful representation $G \to U(n)$ and consider the 
 composite $H \to G \to U(n)$. We obtain restriction maps
 in cohomology
 \[
 R^*(BU(n)) \to R^*(BG) \to R^*(BH).
 \]
 Since $R^*(BH)$ is finitely generated as a $R^*(BU(n))$-module, it is 
 also finitely generated as a $R^*(BG)$-module.
\end{proof}

\begin{cor}\label{cor:noetherian}
  Let $R$ be a complex orientable commutative noetherian ring spectrum, and let $G$ be a compact Lie group. Then $R^*(BG)$ is noetherian.
\end{cor}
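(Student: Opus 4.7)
The plan is to bootstrap from Venkov's theorem (Theorem~\ref{thm:venkov2}) together with the explicit description of $R^*(BU(n))$ as a power series ring from Lemma~\ref{lem:cx-orientable}. Since every compact Lie group admits a faithful unitary representation $G\hookrightarrow U(n)$, Theorem~\ref{thm:venkov2} supplies the key finiteness statement: the ring $R^*(BG)$ is a finitely generated module over $R^*(BU(n))$ via the restriction map $R^*(BU(n))\to R^*(BG)$.

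Next, I would argue that the base ring $R^*(BU(n))$ is itself noetherian. By Lemma~\ref{lem:cx-orientable}, this ring is the graded formal power series ring $R^*\llbracket c_1, \ldots, c_n\rrbracket$, and by hypothesis $\pi_*R = R^{-*}$ is a noetherian graded ring. The Hilbert basis theorem for formal power series (applied iteratively in the graded setting, where the $c_i$ have positive topological degree and so convergence of formal sums is unambiguous) then yields that $R^*\llbracket c_1, \ldots, c_n\rrbracket$ is noetherian.

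Finally, I would conclude by the standard observation that if $A\to B$ is a ring homomorphism making $B$ a finitely generated $A$-module with $A$ noetherian, then $B$ is noetherian: any (homogeneous) ideal of $B$ is in particular an $A$-submodule of the finitely generated $A$-module $B$, hence finitely generated over $A$, and therefore finitely generated as an ideal of $B$. Taking $A = R^*(BU(n))$ and $B = R^*(BG)$ completes the proof.

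There is essentially no obstacle here, as the heavy lifting has already been done: the content is entirely packaged in Theorem~\ref{thm:venkov2} and Lemma~\ref{lem:cx-orientable}. The only small point one should be mindful of is that we are working with graded noetherianity throughout, but since the Chern classes $c_i$ sit in nonzero (even) degree, the grading interacts with the power series structure in the expected way and the Hilbert basis argument goes through verbatim.
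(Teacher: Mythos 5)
Your proposal is correct and follows essentially the same route as the paper: embed $G$ in $U(n)$, use Theorem~\ref{thm:venkov2} to see $R^*(BG)$ is a finitely generated module over $R^*(BU(n))=R^*\llbracket c_1,\ldots,c_n\rrbracket$, and use the Hilbert basis theorem for the power series ring (exactly as the paper does inside the proof of Lemma~\ref{lem:venkov1}). The only cosmetic difference is in the final step: the paper phrases the conclusion as $R^*(BG)$ being a finitely generated $R^*$-algebra, whereas you conclude directly via the standard fact that a ring which is a finitely generated module over a noetherian ring is itself noetherian, which is equally valid.
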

\begin{proof}
By Venkov's theorem we know that $R^*(BG)$ is a finitely generated $R^*(BU(n))$-module for some $n$. Since $R^*(BU(n))$ is a power series ring, it follows that $R^*(BG)$ is a finitely generated $R^*$-algebra, and hence is noetherian.
\end{proof}

\section{Borel equivariant spectra}
 
 In this section we recall some equivariant preliminaries. In particular, we recall the connection between 
 $G$-equivariant Borel spectra and spectra with a $G$-action, and relate Borel spectra to cochain spectra. We will also discuss some orientability conditions which we will need later on. 
 
 Throughout we let $G$ be a compact Lie group and let $\Sp_G$ denote the $\infty$-category 
 of genuine $G$-spectra. Recall that $EG$ denotes the universal free $G$-space, and that a $G$-spectrum $X$ is said to be
 cofree (or Borel complete) if the natural map $X \to \Hom (EG_+, X)$ is 
 an equivalence in $\Sp_G$. 
 The full subcategory of $\Sp_G$ spanned by the cofree $G$-spectra is denoted by 
 $\Sp_G^{\mathrm{cofree}}$.

\begin{defn}
 Given a spectrum $X$, we define its associated \emph{Borel $G$-spectrum} by
 $b_GX = \Hom(EG_+,X)$ where $X$ is implicitly inflated to a $G$-spectrum.
 By adjunction this is a cofree $G$-spectrum. If $R$ is 
 a commutative ring spectrum, then the Borel spectrum $b_GR$ is a commutative ring 
 $G$-spectrum via the diagonal on $EG_+$.
\end{defn}
 
 A spectrum with a $G$-action 
 is an object in the $\infty$-category $\Fun(BG,\Sp)$. The relation between these two 
 notions is explained by the following result. 
 
\begin{lem}
  Let $G$ be a compact Lie group and $X$ be a $G$-spectrum. Write $X_u$ for 
  the underlying spectrum with its residual $G$-action. The 
  underlying functor $(-)_u$ descends to a symmetric monoidal 
  equivalence of $\infty$-categories
  $\Sp_G^{\mathrm{cofree}}\simeq \Fun(BG, \Sp)$.
\end{lem}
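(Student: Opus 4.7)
The plan is to realize the equivalence as coming from an adjunction between $(-)_u$ and a right adjoint Borel construction $b_G \colon \Fun(BG, \Sp) \to \Sp_G$, and to identify $\Sp_G^{\mathrm{cofree}}$ with the essential image of $b_G$. First I would produce the right adjoint $b_G$: since $\Sp_G$ and $\Fun(BG, \Sp)$ are both presentable stable $\infty$-categories and $(-)_u$ preserves colimits, existence follows from the adjoint functor theorem. Informally, $b_G(Y)$ is the cofree $G$-spectrum whose underlying spectrum-with-$G$-action is $Y$, and it is modelled by $\Hom(EG_+, -)$ applied to any genuine $G$-spectrum lifting $Y$.

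Next I would verify the two key properties of this adjunction. The counit $(b_G Y)_u \to Y$ is an equivalence for every $Y \in \Fun(BG, \Sp)$: this follows from the non-equivariant contractibility of $EG$, which forces the underlying spectrum of $\Hom(EG_+, -)$ to coincide with the naive mapping spectrum out of $S^0$. Conversely, for $X \in \Sp_G$, the unit $X \to b_G(X_u)$ is naturally identified with the defining cofree-completion map $X \to \Hom(EG_+, X)$, which is an equivalence precisely when $X \in \Sp_G^{\mathrm{cofree}}$. Standard adjunction yoga (an adjunction with invertible counit restricts to an equivalence between the target and the full subcategory of the source where the unit is invertible) then yields the underlying equivalence $\Sp_G^{\mathrm{cofree}} \simeq \Fun(BG, \Sp)$.

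For the symmetric monoidal upgrade, the functor $(-)_u$ is symmetric monoidal since the underlying spectrum of a smash product of $G$-spectra is the pointwise smash equipped with the diagonal $G$-action. The subcategory $\Sp_G^{\mathrm{cofree}}$ inherits a canonical symmetric monoidal structure (transported from $\Fun(BG, \Sp)$ along the equivalence above, or equivalently obtained by cofree-completing the tensor product from $\Sp_G$) under which the restricted functor is symmetric monoidal, and hence a symmetric monoidal equivalence; a detailed $\infty$-categorical treatment, in the conventions of this paper, is given in \cite[\S 5]{MathewNaumannNoel17}. The main technical obstacle is identifying the unit of the adjunction with the cofree-completion map at the $\infty$-categorical level, since the Borel construction interacts with the genuine equivariant structure in subtle ways; once that identification is secured, the remainder of the argument is essentially formal adjunction and localization theory.
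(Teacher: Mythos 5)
Your proposal is correct and follows the standard argument: the paper itself offers no independent proof, simply citing \cite[6.17]{MathewNaumannNoel17} and \cite[II.2.7]{NikolausScholze}, and those proofs proceed essentially as you outline (the underlying functor $(-)_u$ admits a fully faithful right adjoint $b_G$ with counit an equivalence, the unit is identified with the Borel-completion map $X \to \Hom(EG_+,X)$ so the essential image is $\Sp_G^{\mathrm{cofree}}$, and the symmetric monoidal structure descends since the localization at underlying equivalences is compatible with the smash product). The only point to keep in mind is the one you already flag: the identification $b_G(X_u)\simeq \Hom(EG_+,X)$ must be established before the counit computation via contractibility of $EG$ is legitimate, as stated it is slightly circular without that input.
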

\begin{proof}
 See~\cite[6.17]{MathewNaumannNoel17} for a proof for finite groups, and~\cite[II.2.7]{NikolausScholze} for the general case.
\end{proof}

 The following result connects the category of modules over 
 the Borel spectrum and the category of representations of 
 non-equivariant modules. Since any compact 
 $b_G R$-module is necessarily cofree, the previous lemma gives the following. 

\begin{prop}[{\cite[6.21]{MathewNaumannNoel17}}]\label{prop-borel}
 Let $G$ be a compact Lie group and $R$ be a commutative ring spectrum. Then the 
 functor 
 \[
 (-)_u \colon \mod{b_G R} \to \Fun(BG, \mod{R})
 \]
 is fully faithful when restricted to compact objects.
\end{prop}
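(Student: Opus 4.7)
The plan is to reduce the statement to the symmetric monoidal equivalence $\Sp_G^{\mathrm{cofree}} \simeq \Fun(BG, \Sp)$ provided by the previous lemma. The strategy proceeds in two steps: first show that every compact $b_G R$-module has cofree underlying $G$-spectrum, and then invoke the equivalence to conclude.

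For the first step, recall that the category $\mod{b_G R}$ is compactly generated by the orbits $G/H_+ \otimes b_G R$ as $H$ ranges over closed subgroups of $G$, so that compact $b_G R$-modules form the thick subcategory generated by these objects. Each generator is cofree: since $G/H_+$ is a finite $G$-CW complex, smashing with it commutes with $\Hom(EG_+, -)$, giving $G/H_+ \otimes b_G R \simeq \Hom(EG_+, G/H_+ \otimes R)$. Moreover, cofreeness is equivalent to locality with respect to the cofiber $\widetilde{EG}$ of $EG_+ \to S^0$, and local objects in a stable $\infty$-category are closed under fibers, cofibers, retracts, and (de)suspensions. Thus the full subcategory of $\mod{b_G R}$ consisting of modules whose underlying $G$-spectrum is cofree is thick and contains all the compact generators; hence it contains every compact $b_G R$-module.

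For the second step, the symmetric monoidal equivalence from the previous lemma sends $b_G R$ to $R$ with its trivial $BG$-action, so by passing to module categories it upgrades to an equivalence between cofree $b_G R$-modules in $\Sp_G$ and $\Fun(BG, \mod{R})$. Restricting to compact $b_G R$-modules, which are cofree by the first step, yields the desired fully faithful underlying functor.

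The main obstacle is the first step: ensuring that compactness forces cofreeness. Once this is in place, the conclusion is formal from the previous lemma, because fully faithfulness is preserved under restriction to any full subcategory.
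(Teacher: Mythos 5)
Your proposal is correct and follows essentially the same route as the paper: the paper's proof is precisely the observation that compact $b_GR$-modules are cofree (which you justify by the standard thick-subcategory argument on the generators $G/H_+\wedge b_GR$, using dualizability of $G/H_+$ and exactness of $\Hom(\widetilde{EG},-)$) followed by an appeal to the symmetric monoidal equivalence $\Sp_G^{\mathrm{cofree}}\simeq \Fun(BG,\Sp)$ of the preceding lemma, upgraded to module categories. You simply spell out the cofreeness step that the paper (following Mathew--Naumann--Noel) leaves as an assertion.
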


\subsection{Borel spectra and cochains}
Let $H$ be a closed subgroup of a compact Lie group $G$. There is a natural restriction 
functor $\mrm{res}^G_H\colon \Sp_G \to \Sp_H$ and this has a left adjoint $\mrm{ind}_H^G = G_+ \wedge_H -$ and a right adjoint $\mrm{coind}_H^G = F_H(G_+, -)$ called induction and coinduction respectively. These satisfy the following important properties which we will use throughout.
\begin{thm}
Let $G$ be a compact Lie group, $H \leq G$ be a closed subgroup, $X$ be a $G$-spectrum and $Y$ be an $H$-spectrum.
\begin{itemize}
\item[(a)] \emph{Wirthm\"uller isomorphism}: There is a natural equivalence \[\mrm{ind}_H^G(S^{-L(G,H)} \wedge Y) \simeq \mrm{coind}_H^GY\] where the $H$-representation $L(G,H)$ is the tangent space at the identity coset $eH$ of the smooth manifold $G/H$, see also 
Definition~\ref{defn-representations}. 
\item[(b)] \emph{The (right) projection formula}: There is a natural equivalence \[\mrm{coind}_H^G(Y \wedge \mrm{res}_H^GX) \simeq \mrm{coind}_H^GY \wedge X.\]
\item[(c)] \emph{The (left) projection formula}: There is a natural equivalence \[\mrm{ind}_H^G(Y \wedge \mrm{res}_H^GX) \simeq \mrm{ind}_H^GY \wedge X.\]
\end{itemize}
\end{thm}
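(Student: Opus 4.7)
These three statements are foundational results in genuine equivariant stable homotopy theory, and in a paper of this scope they would typically be cited rather than proven from scratch; nonetheless, a natural plan is to establish the projection formulas first, since they are essentially formal, and then use a geometric argument to obtain the Wirthm\"uller isomorphism.

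For the left projection formula (c), my plan is to observe that the $G$-spectrum $G_+ \wedge_H (Y \wedge \mrm{res}_H^G X)$ is naturally equivalent to $(G_+ \wedge_H Y) \wedge X$: the diagonal $G$-action on $G_+ \wedge Y \wedge X$ descends along the $H$-quotient precisely because the restricted $G$-action on $X$ extends the $H$-action used to form the balanced smash product. For the right projection formula (b), I would deduce the statement from (c) by an adjunction argument. Both sides are right adjoint (in $X$) to functors that one can identify using (c) together with the fact that $\mrm{coind}_H^G = F_H(G_+,-)$ is right adjoint to restriction; alternatively, one writes down the comparison map directly using the counit of that adjunction and checks it is an equivalence by mapping a compact $G$-spectrum $Z$ in from both sides.

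The substantive step is the Wirthm\"uller isomorphism (a). The strategy is to construct a natural pretransfer map $G_+ \wedge_H (S^{-L(G,H)} \wedge Y) \to F_H(G_+, Y)$ by a Pontryagin--Thom collapse on an equivariant tubular neighbourhood of the identity coset $eH \in G$, whose normal $H$-representation at $eH$ is precisely $L(G,H)$, and then to verify that this map is an equivalence. Using the projection formula (c) one reduces to the universal case $Y = S^0$, and equivalence there can be checked on geometric fixed points, ultimately reducing to finite subgroup orbits.

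The main obstacle is the Wirthm\"uller isomorphism itself, since it genuinely requires smooth manifold input about the coset space $G/H$ (entering through the tangent representation $L(G,H)$) and cannot be obtained by purely formal $\infty$-categorical manipulations. The cleanest references for a complete proof in the compact Lie group setting are Lewis--May--Steinberger for the classical treatment, or the more recent $\infty$-categorical accounts in the literature.
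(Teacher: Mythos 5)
The paper offers no proof of this theorem at all: it simply cites Lewis--May--Steinberger (II.6.2 for the Wirthm\"uller isomorphism and II.4.9 for the two projection formulas), which is precisely the route you identify as the appropriate one for results of this kind. Your supplementary sketch (formal balanced-product argument for the projection formulas, Pontryagin--Thom collapse near $eH$ with normal representation $L(G,H)$ for Wirthm\"uller) is consistent with the standard proof in that reference, so your proposal matches the paper's approach.
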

\begin{proof}
Part (a) may be found in~\cite[II.6.2]{LMS}. Parts (b) and (c) are~\cite[II.4.9]{LMS}. \end{proof}

The following lemma records the relation between Borel spectra and change of groups 
functors.

\begin{lem}\label{lem:omnibusBorel}
Let $G$ be a compact Lie group and consider a closed subgroup $H\leq G$. 
\begin{itemize}
\item[(a)] For any $G$-spectrum $X$, there is a natural equivalence $\mrm{coind}_H^G\mrm{res}_H^GX \simeq DG/H_+ \wedge X$. In particular, we have $\pi_*^G(DG/H_+ \wedge X)= \pi_*^H(X)$.
\item[(b)] For any (non-equivariant) spectrum $X$, there are natural equivalences
$\mrm{res}^G_H b_GX \simeq b_HX$ and $\mrm{coind}_H^G b_H X\simeq DG/H_+ \wedge b_G X$.
\end{itemize}
\end{lem}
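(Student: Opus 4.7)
The plan is to prove (a) first and then deduce both parts of (b) from it.

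For (a), I would apply the right projection formula with $Y = S^0$ (carrying the trivial $H$-action) and the given $G$-spectrum $X$. Since $S^0 \wedge \mrm{res}_H^G X \simeq \mrm{res}_H^G X$, this yields
\[
\mrm{coind}_H^G \mrm{res}_H^G X \simeq \mrm{coind}_H^G(S^0) \wedge X,
\]
reducing everything to identifying $\mrm{coind}_H^G(S^0) \simeq DG/H_+$. I would establish this by a Yoneda-type argument: for any $G$-spectrum $Z$, the left projection formula gives $\mrm{ind}_H^G \mrm{res}_H^G Z \simeq G/H_+ \wedge Z$, and combining the induction-restriction and restriction-coinduction adjunctions produces the natural chain
\[
[Z, DG/H_+]^G \cong [Z \wedge G/H_+, S^0]^G \cong [\mrm{ind}_H^G \mrm{res}_H^G Z, S^0]^G \cong [\mrm{res}_H^G Z, S^0]^H \cong [Z, \mrm{coind}_H^G S^0]^G.
\]
Alternatively, one could combine the Wirthm\"uller isomorphism $\mrm{coind}_H^G S^0 \simeq \mrm{ind}_H^G S^{-L(G,H)}$ with Atiyah duality for the closed smooth manifold $G/H$, whose tangent bundle is $G \times_H L(G,H)$. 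The ``in particular'' clause is then immediate from the restriction-coinduction adjunction, since $\pi_*^G(\mrm{coind}_H^G \mrm{res}_H^G X) \cong \pi_*^H \mrm{res}_H^G X = \pi_*^H X$.

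For (b), I would handle the two equivalences in order. Restriction is symmetric monoidal and commutes with function spectra, and the restriction of an inflation from the trivial group is again an inflation, so
\[
\mrm{res}_H^G b_G X = \mrm{res}_H^G \Hom(EG_+, X) \simeq \Hom(\mrm{res}_H^G EG_+, X).
\]
Since $\mrm{res}_H^G EG$ is a free $H$-CW complex with contractible underlying space, it is an $H$-equivariant model for $EH$, and the right-hand side is $\Hom(EH_+, X) = b_H X$, proving the first equivalence. The second equivalence then falls out of part (a) applied to the $G$-spectrum $b_G X$, together with the first equivalence:
\[
\mrm{coind}_H^G b_H X \simeq \mrm{coind}_H^G \mrm{res}_H^G b_G X \simeq DG/H_+ \wedge b_G X.
\]

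The only substantive input is the identification $\mrm{coind}_H^G(S^0) \simeq DG/H_+$, which is standard. Everything else amounts to routine manipulation with the projection formulas, the change-of-group adjunctions, and the basic observation that $\mrm{res}_H^G EG$ is a model for $EH$, so the proof ought to be quite short.
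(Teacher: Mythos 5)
Your proposal is correct and takes essentially the same route as the paper: part (a) via the projection formula together with the identification $\mrm{coind}_H^G(S^0) \simeq DG/H_+$ (which the paper simply asserts and you justify by a Yoneda argument) and the restriction--coinduction adjunction for the ``in particular'' clause, and part (b) via the compatibility of restriction with $\Hom(EG_+,-)$ plus $\mrm{res}^G_H EG \simeq EH$, then combining with (a). The only cosmetic difference is that you invoke ``restriction commutes with function spectra'' as a standard fact, whereas the paper proves exactly that instance by the same Yoneda/projection-formula argument.
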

\begin{proof}
The first part of (a) follows from the projection formula together with the fact that $\mrm{coind}_H^G(S^0) \simeq DG/H_+$. The second part of (a) then follows from this by applying the restriction-coinduction adjunction. 

For part (b), let $Y$ be a $G$-spectrum. We first claim that there is an 
equivalence $\mrm{res}^G_H \Hom (EG_+,Y) \simeq \Hom (EH_+,\mrm{res}^G_HY)$. 
Indeed for any $H$-spectrum $Z$, we have equivalences
\begin{align*}
\Map_{\Sp_H}(Z, \mrm{res}^G_H \Hom (EG_+, Y)) &\simeq \Map_{\Sp_G}(EG_+ \wedge \mrm{ind}^G_HZ, Y) \\
&\simeq \Map_{\Sp_G}(\mrm{ind}^G_H (EH_+ \wedge Z), Y) \\
&\simeq \Map_{\Sp_H}(Z, \Hom (EH_+,\mrm{res}^G_HY))
\end{align*}
where the second isomorphism follows from the fact that $\mrm{res}^G_H EG_+ \simeq EH_+$, combined with the projection formula. Hence the claim follows from the Yoneda lemma. 
The first part of (b) now follows from this claim and the fact that $\mrm{res}^G_H \mrm{inf}^G_1 \simeq \mrm{inf}^H_1.$ The second equivalence in (b) then follows from combining the first part of (b) with (a).
\end{proof}

The following lemma shows the relation between Borel equivariant spectra and cochain spectra.

\begin{lem}\label{lem:fixedpointscoind}
Let $G$ be a compact Lie group, $H \leq G$ be a closed subgroup, and let $R$ be a commutative ring spectrum.
\begin{itemize}
\item[(a)] There is an equivalence $(b_GR)^G \simeq C^*(BG;R)$ of ring spectra.
\item[(b)] There is an equivalence $(DG/H_+\wedge b_GR)^G \simeq C^*(BH;R)$ of $C^*(BG;R)$-modules.
\item[(c)] There is an equivalence \[\Hom_{b_GR}(b_GR\wedge DG/H_+, b_GR)^G \simeq C^*(BH^{-L(G,H)};R)\] of $C^*(BG;R)$-modules where  $BH^{-L(G,H)}=EH_+ \wedge_H S^{-L(G,H)}$ denotes the Thom spectrum.
\end{itemize}
\end{lem}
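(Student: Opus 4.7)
For part (a), the plan is to unwind the definition of $b_G R = \Hom(EG_+, R)$ with $R$ inflated trivially. Since $EG_+$ is a free $G$-spectrum and $R$ has trivial action, the free-orbit adjunction yields
\[(b_G R)^G \simeq \Hom(EG_+/G, R) = \Hom(BG_+, R) = C^*(BG;R).\]
The multiplicative structure on both sides comes from the diagonal of $EG_+$ (respectively $BG_+$) together with the multiplication of $R$, so this is an equivalence of commutative ring spectra.

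For part (b), Lemma~\ref{lem:omnibusBorel}(b) identifies $DG/H_+ \wedge b_G R \simeq \mrm{coind}_H^G b_H R$ as $b_G R$-modules. The standard identity $(\mrm{coind}_H^G X)^G \simeq X^H$, combined with part (a) applied to $H$, then gives
\[(DG/H_+ \wedge b_G R)^G \simeq (b_H R)^H \simeq C^*(BH;R).\]
Both sides naturally carry $C^*(BG;R)$-module structures (on the left from the $b_G R$-action, on the right from restriction along $BH_+ \to BG_+$), and these agree by naturality.

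For part (c), first simplify using extension of scalars:
\[\Hom_{b_G R}(b_G R \wedge DG/H_+, b_G R) \simeq \Hom(DG/H_+, b_G R).\]
Writing $DG/H_+ \simeq \mrm{coind}_H^G S^0$ and applying the Wirthm\"uller isomorphism to rewrite this as $\mrm{ind}_H^G S^{-L(G,H)}$, the $\mrm{ind}$--$\mrm{res}$ adjunction together with the invertibility of $S^{-L(G,H)}$ and Lemma~\ref{lem:omnibusBorel}(b) yields
\[\Hom(DG/H_+, b_G R) \simeq \mrm{coind}_H^G(S^{L(G,H)} \wedge b_H R).\]
Taking $G$-fixed points produces $(S^{L(G,H)} \wedge b_H R)^H$. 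On the other hand, writing out the Thom spectrum $BH^{-L(G,H)} = EH_+ \wedge_H S^{-L(G,H)}$ as a free $H$-orbit and using the free-orbit adjunction as in part (a) together with the invertibility of $S^{-L(G,H)}$, one obtains
\[C^*(BH^{-L(G,H)}; R) \simeq (S^{L(G,H)} \wedge b_H R)^H.\]
Matching these two expressions gives (c).

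The main conceptual input is the Wirthm\"uller isomorphism, which produces the representation sphere $S^{L(G,H)}$ and thereby introduces the Thom spectrum into part (c). The main obstacle is essentially bookkeeping: coordinating the several change-of-groups adjunctions (projection formula, Wirthm\"uller, free-orbit, and coinduction-fixed points) and verifying that each preserves the relevant $C^*(BG;R)$-module structures by naturality.
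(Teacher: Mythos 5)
Your proposal is correct and follows essentially the same route as the paper: parts (a) and (b) by the standard change-of-groups adjunctions together with Lemma~\ref{lem:omnibusBorel}(b), and part (c) by rewriting $DG/H_+$ via the Wirthm\"uller isomorphism and then untangling adjunctions and the projection formula. The only cosmetic difference is that you identify both sides with $(S^{L(G,H)}\wedge b_HR)^H$, whereas the paper lands directly on $\Hom(EH_+\wedge S^{-L(G,H)},R)^H$; these agree by invertibility of the representation sphere.
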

\begin{proof}

The claims in parts (a) and (b) follow by adjunction arguments together with part (b) of Lemma~\ref{lem:omnibusBorel}.
For part (c), note that $DG/H_+ \simeq G_+ \wedge_H S^{-L(G,H)}$ by the Wirthm\"{u}ller isomorphism. Therefore we get
\[
\Hom_{b_GR}(b_GR\wedge DG/H_+, b_GR)^G \simeq \Hom(G_+ \wedge_H S^{-L(G,H)}, b_GR)^G.\]
By adjunction and the projection formula, 
\begin{align*} \Hom(G_+ \wedge_H S^{-L(G,H)}, b_GR)^G &\simeq \Hom(G_+ \wedge_H (EH_+ \wedge S^{-L(G,H)}), R)^G \\ &\simeq \Hom(EH_+ \wedge S^{-L(G,H)}, R)^H\end{align*} and the claim then follows since $R$ is inflated from a non-equivariant spectrum.
\end{proof}

\subsection{Orientability}\label{sec:orientations}
We finish this section by discussing a certain notion of orientability on representations. 
Firstly we recall some important representations, more details 
can be found for instance in~\cite[3.2.1]{Schwede}.

\begin{defn}\label{defn-representations}
 Consider a closed subgroup $H$ of a compact Lie group $G$. 
 We consider the following representations:
 \begin{itemize}
 \item the adjoint representations $LG=T_e G$ and $LH=T_e H$. These are the tangent 
 space at $e$ of the smooth manifolds $G$ and $H$ respectively, endowed with the 
 conjugation action. 
 \item the tangent $H$-representation $L(G,H)=T_{eH}G/H$. 
 This is the tangent space at $eH$ of the smooth manifold $G/H$ which admits an 
 $H$-action by left translation.  If $H$ has 
 finite index in $G$, then $L(G,H)$ is trivial.
 \item the representation $\Ad(G,H)= LG/LH$ which has an action by conjugation 
 of the normalizer group $N_G(H)$. 
 In general $H\leq N_G(H)$ does not act trivially on $\Ad(G,H)$.
 \end{itemize}
 The differential of the projection map $G \to G/H$ induces an isomorphism 
 $\Ad(G,H) \simeq L(G,H)$. One checks that the conjugation action of $H\leq N_G(H)$ 
 on $\Ad(G,H)$ corresponds to the left translation action of $H$ on $L(G,H)$. 
 Therefore we have an isomorphism of $H$-representations $\Ad(G,H)\simeq L(G,H)$.
\end{defn}

We now recall the definition of orientations, and give some consequences which we will require; see~\cite[\S 3]{orientations} for more details.

\begin{defn}\label{def-cx stable}
 Let $R$ be a commutative ring $G$-spectrum and let $V$ be a $G$-representation 
 of dimension $d$. An $R$-\emph{orientation} $u_V$ for $V$ is a $G$-equivariant 
 map $u_V \colon S^{d}\to R \wedge S^V $ such that the composite 
 \[
 u_V^R \colon R \wedge S^{d} \xrightarrow{R \wedge u_V } R \wedge R \wedge S^{V} 
 \xrightarrow{\mu \wedge S^{V}} R \wedge S^{V} 
 \]
 is an equivariant equivalence. Note that given an equivariant equivalence $u_V^R \colon  R \wedge S^{d} \to R \wedge S^{V}$ 
 which is also an $R$-module map, we can precompose $u_V^R$ with the map 
 $\eta \wedge S^{d} \colon S^0 \wedge S^{d} \to R \wedge S^{d}$ to obtain an 
 $R$-orientation.
\end{defn}

Let us give a trivial but important example. 

\begin{ex}
 If $G$ is a finite group, then $LG$ is zero and so $R$-orientable for any commutative ring $G$-spectrum $R$.
\end{ex}

By construction, weak equivalences between cofree spectra are detected on the underlying, non-equivariant homotopy. This gives the following way to detect 
$R$-orientability.

\begin{lem}[{\cite[3.5]{orientations}}]\label{lem-cofree-orientation}
Let $G$ be a compact Lie group, $R$ be a commutative ring $G$-spectrum and let $V$ be a $G$-representation of dimension $d$. 
If $R$ is cofree and there is an element $u_V \in \pi_d^G(R\wedge S^V)$ such that $u_V^R$ induces a non-equivariant equivalence, then $V$ is $R$-orientable.
\end{lem}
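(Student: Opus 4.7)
The plan is to show that both the source and target of the map $u_V^R\colon R \wedge S^d \to R \wedge S^V$ are cofree $G$-spectra, and then invoke the fact that a map between cofree $G$-spectra is an equivariant equivalence if and only if it is a non-equivariant equivalence. This detection principle is a direct consequence of the symmetric monoidal equivalence $\Sp_G^{\mathrm{cofree}} \simeq \Fun(BG, \Sp)$ recalled earlier, since the underlying functor $(-)_u$ both preserves and reflects weak equivalences on this subcategory.

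To verify that $R \wedge S^V$ is cofree whenever $R$ is cofree, I would use that representation spheres $S^V$ are dualizable in $\Sp_G$. Dualizability of $S^V$ allows one to move it past the cotensor $\Hom(EG_+, -)$: more precisely, for any dualizable $Y$ and any $Z$, we have $\Hom(X, Z \wedge Y) \simeq \Hom(X \wedge DY, Z) \simeq \Hom(X, Z) \wedge Y$ by the adjunction together with $\Hom(DY, -) \simeq - \wedge Y$. Applying this with $X = EG_+$, $Z = R$ and $Y = S^V$ yields
\[
\Hom(EG_+, R \wedge S^V) \simeq \Hom(EG_+, R) \wedge S^V \simeq R \wedge S^V,
\]
where the last equivalence uses cofreeness of $R$. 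The same argument (with $V$ replaced by a trivial representation of dimension $d$) shows that $R \wedge S^d$ is cofree.

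With both sides identified as cofree $G$-spectra, the assumption that $u_V^R$ induces a non-equivariant equivalence immediately upgrades to an equivariant equivalence via the equivalence $\Sp_G^{\mathrm{cofree}} \simeq \Fun(BG, \Sp)$. Hence $u_V$ is an $R$-orientation of $V$ in the sense of Definition~\ref{def-cx stable}. There is no real obstacle here beyond checking that smashing with a representation sphere preserves cofreeness; everything else is a formal consequence of the equivalence of categories already cited in the text.
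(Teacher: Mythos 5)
Your argument is correct and is essentially the same as the one the paper relies on (the lemma is cited from~\cite[3.5]{orientations}, and the preceding remark in the text already points to the key fact that equivalences between cofree $G$-spectra are detected on underlying spectra). Your additional verification that $R\wedge S^V$ and $R\wedge S^d$ remain cofree, using dualizability of representation spheres to commute $-\wedge S^V$ past $\Hom(EG_+,-)$, is exactly the missing detail and is carried out correctly.
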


We now give some examples of representations which are Borel orientable.
\begin{lem}\label{lem-representation-borel-oriented}
Let $G$ be a compact Lie group, $V$ a complex $G$-representation and let $R$ be a complex oriented commutative ring spectrum. Then $V$ admits a $b_GR$-orientation. 
\end{lem}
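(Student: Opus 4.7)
The plan is to apply Lemma~\ref{lem-cofree-orientation}: since $b_GR$ is cofree, the problem reduces to producing a class $u_V \in \pi^G_d(b_GR \wedge S^V)$ (with $d = \dim_{\mathbb{R}} V = 2n$) whose associated $b_GR$-module map $u_V^{b_GR}$ is a non-equivariant equivalence. I will construct $u_V$ as an equivariant Thom class obtained by pulling back the universal complex Thom class on $BU(n)$.

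The first step is to recognize that $\pi^G_d(b_GR \wedge S^V)$ is the natural home for such a Thom class. Using $b_GR = \Hom(EG_+, R)$, invertibility of smashing with $S^V$, and the fact that $R$ is inflated from non-equivariant spectra (so that $[X, R]^G \cong [X/G, R]$ for free $G$-spectra $X$), one obtains
\[
\pi^G_d(b_GR \wedge S^V) \;\cong\; [EG_+ \wedge S^{d-V}, R]^G \;\cong\; [EG_+ \wedge_G S^{d-V}, R] \;=\; R^0(BG^{d-V}),
\]
which is the $R$-cohomology of the Thom spectrum of the virtual representation $d-V$ over $BG$.

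Next I would construct the Thom class itself. Since $V$ is a complex $G$-representation of complex dimension $n$, it is classified by a map $BG \to BU(n)$; complex orientability of $R$ provides a universal Thom class on the tautological bundle over $BU(n)$, as reflected in the power series description of $R^*(BU(n))$ in Lemma~\ref{lem:cx-orientable}. Pulling back this universal class along the classifying map yields the required $u_V$. To verify the non-equivariant equivalence condition, I would restrict along the inclusion of a fiber of $EG \times_G V \to BG$: by construction the Thom class restricts to a generator of $\tilde{R}^{2n}(S^V) \cong \pi_0 R$, so $u_V^{b_GR}$ is non-equivariantly multiplication by a unit and hence an equivalence.

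The main point of care will be the bookkeeping between the abstract notion of $R$-orientation in Definition~\ref{def-cx stable} and the concrete notion of a Thom class on the Borel construction, together with the verification that smashing with $S^V$ interacts well with cofreeness (so that the underlying check from Lemma~\ref{lem-cofree-orientation} really applies to $u_V^{b_GR}$). Once these identifications are in place, no further computation is needed, as the Thom class property on fibers is exactly the orientation condition.
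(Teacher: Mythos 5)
Your proposal is correct and takes essentially the same route as the paper: there the complex orientation is used to trivialize $\Sigma^V R$ against $\Sigma^{|V|}R$ (this is exactly the Thom class you pull back from $BU(n)$), and applying $\Hom(EG_+,-)$ then upgrades the underlying equivalence to an equivariant one of cofree spectra, which is the same cofreeness mechanism as your appeal to Lemma~\ref{lem-cofree-orientation}. Your write-up simply makes the Thom-class and adjunction bookkeeping explicit.
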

\begin{proof}
 Since $R$ is complex orientable there is an equivalence 
 $\Sigma^{|V|}R \simeq \Sigma^{V} R$. 
 Applying the functor $\Hom(EG_+, -)$ gives an equivariant equivalence 
 $b_GR \wedge S^{|V|} \simeq b_GR \wedge S^{V}$. 
\end{proof}

\begin{ex}
 Let $R$ be a complex oriented ring spectrum and let $G$ be a complex compact Lie 
 group (e.g., a complex torus). Then $LG$ is a complex representation and hence 
 it is $b_GR$-orientable. 
\end{ex}

We now recall the following definition from~\cite{Greenlees2020} and an important consequence.
\begin{defn}\label{defn:connected}
 Let $k$ be a field and consider a compact Lie group $G$. 
 We say that $G$ is \emph{$k$-connected} if either $G$ is connected or if there is a prime 
 $p$ and a positive integer $n$ such that $p^n k=0$ and $\pi_0(G)$ is a $p$-group.
\end{defn}

\begin{lem}\label{lem-k-connected}
 Let $k$ be a field and let $G$ be a compact Lie group which is $k$-connected. 
 Then every $G$-representation is $b_Gk$-orientable.
\end{lem}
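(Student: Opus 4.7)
My plan is to apply Lemma~\ref{lem-cofree-orientation}. Since $b_G k$ is cofree, it will suffice to exhibit a class $u_V \in \pi_d^G(b_G k \wedge S^V)$ (with $d = \dim V$) whose underlying non-equivariant image generates $\pi_d(k \wedge S^V) \cong k$. I would first use that $S^V$ is dualizable to obtain an equivalence $b_G k \wedge S^V \simeq \Hom(EG_+, k \wedge S^V)$ of $G$-spectra; taking fixed points then identifies
\[
\pi_d^G(b_G k \wedge S^V) \cong \pi_d\bigl((k \wedge S^V)^{hG}\bigr),
\]
where $k \wedge S^V$ is viewed as a spectrum with $G$-action inherited from $V$.

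Next I would analyse the right-hand side using the homotopy fixed points spectral sequence
\[
E_2^{s,t} = H^s\bigl(BG;\,\pi_t(k \wedge S^V)\bigr) \Longrightarrow \pi_{t-s}\bigl((k \wedge S^V)^{hG}\bigr).
\]
The coefficient $\pi_t(k \wedge S^V)$ is concentrated in degree $t = d$, so the $E_2$-page has a single nonzero row and every class on it is automatically a permanent cycle. Therefore any $G$-invariant generator of $\pi_d(k \wedge S^V) \cong k$ will lift to a class in $\pi_d((k \wedge S^V)^{hG})$ whose underlying non-equivariant restriction is that generator, which yields the desired $u_V$.

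What remains is to verify that the $G$-action on $\pi_d(k \wedge S^V) \cong k$ is trivial under the $k$-connectedness hypothesis. This action is given by a character $\chi\colon G \to k^\times$ that factors through the determinant $\pi_0(G) \to \pi_0(O(d)) = \mathbb{Z}/2 \to k^\times$, since the $G$-action on the top homology of a representation sphere records whether $G$ acts orientation-preservingly on $V$. If $G$ is connected then $\pi_0(G)$ is trivial, and so is $\chi$. Otherwise $\pi_0(G)$ is a $p$-group with $p^n k = 0$: if $p$ is odd then any homomorphism from a group of odd order to $\mathbb{Z}/2$ vanishes, while if $p = 2$ then $-1 = 1$ in $k^\times$ so the composite $\mathbb{Z}/2 \to k^\times$ is already trivial. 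In every case, $1 \in k$ is $G$-invariant and supplies the required generator.

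The most delicate step is the identification of $\chi$ as the determinant character of $V$; once this is in place the remainder reduces to the elementary case analysis above, and the vanishing of higher differentials in the spectral sequence is immediate from the concentration of the coefficient system in a single row.
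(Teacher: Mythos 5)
Your proposal is correct and follows essentially the same route as the paper: identify $\pi_*^G(b_Gk\wedge S^V)$ via the homotopy fixed points spectral sequence, note the coefficients are concentrated in the single row $t=d$ so the class of $1$ survives, and apply Lemma~\ref{lem-cofree-orientation}. The only (harmless) variation is in checking triviality of the action on the coefficients: you factor it through the determinant character $\pi_0(G)\to\mathbb{Z}/2\to k^\times$ and do a case analysis, whereas the paper simply observes that $k$-connectedness rules out any nontrivial homomorphism $\pi_0(G)\to k^\times$.
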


\begin{proof}
 Let $V$ be a $G$-representation of dimension $d$ and consider the 
 homotopy fixed points spectral sequence
 \[
 E_2^{s,t}=H^{s}(BG; H_t(S^V;k)) \Rightarrow \pi_{t-s}^G(b_G k \wedge S^V).
 \]
 The $G$-action on $H_*(S^V;k)$ factors through $\pi_0(G)$ and this acts 
 trivially as there are no nonzero homomorphisms 
 $\pi_0(G) \to \mrm{Aut}_k(H_*(S^V;k))=k^\times$ by $k$-connectedness.
 Therefore the system of local coefficients is simple.  
 As the spectral sequence is concentrated in the line $t=d$, 
 the element 
 \[
 1\in k=H_d(S^V;k)= H^0(BG; H_d(S^V;k))
 \]
 is a permanent cycle and so defines an element $u_V \in \pi_d^G(b_G k \wedge S^V)$. 
 Base changing this we get a map $ b_G k\wedge S^d \to b_G k \wedge S^V$ which is
 a non-equivariant equivalence and so defines an orientation by 
 Lemma~\ref{lem-cofree-orientation}. 
\end{proof}

We now give a partial extension of the previous lemma for even commutative ring spectra. Later on
we will use this ingredient in verifying the duality property for cochain spectra.
\begin{lem}\label{lem:LUorientable}
 Let $R$ be a commutative ring spectrum with $\pi_*R$ even, and let $U$ be a unitary group.
 Then the representation $LU$ is $b_U R$-orientable.
\end{lem}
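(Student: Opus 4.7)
The plan is to mimic the argument of Lemma~\ref{lem-k-connected}. Since $b_U R$ is cofree, Lemma~\ref{lem-cofree-orientation} reduces the problem to producing a class $u_{LU} \in \pi_d^U(b_U R \wedge S^{LU})$, where $d = \mrm{dim}(LU)$, whose base change $u_{LU}^{b_U R} \colon b_U R \wedge S^d \to b_U R \wedge S^{LU}$ is a non-equivariant equivalence. I will produce such a class as a permanent cycle in the homotopy fixed points spectral sequence, and the parity hypothesis on $\pi_*R$ is what will allow this.

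Concretely, I will set up the homotopy fixed points spectral sequence
\[
E_2^{s,t} = H^s(BU;\pi_t(R \wedge S^{LU})) \Rightarrow \pi_{t-s}^U(b_U R \wedge S^{LU}).
\]
Since $U$ is connected the local coefficient system is simple, and since $LU$ is a real representation a non-equivariant trivialization identifies $\pi_t(R \wedge S^{LU}) \cong \pi_{t-d}R$.

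The main step is to show that the unit class $1 \in \pi_0 R \cong E_2^{0,d}$ is a permanent cycle. For this I will invoke the fact that $H^*(BU;\Z)$ is a polynomial ring on the Chern classes $c_1,\ldots,c_n$ with $|c_i|=2i$, so $H^s(BU;-)$ vanishes for odd $s$. Combined with the hypothesis that $\pi_*R$ is even, this forces the $E_2$-page to be supported on bidegrees $(s,t)$ with $s$ even and $t-d$ even. A nonzero differential $d_r \colon E_r^{s,t} \to E_r^{s+r,t+r-1}$ between two such bidegrees would require $r$ to be simultaneously even (from the cohomological parity) and odd (from the coefficient parity), which is absurd. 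Hence the spectral sequence collapses at $E_2$ for parity reasons and $1$ lifts to the desired class $u_{LU}$.

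Finally, unravelling the definitions, non-equivariantly $u_{LU}$ represents the generator of $\pi_d(R \wedge S^{LU}) \cong \pi_0 R$ coming from the trivialization of the real vector space $LU$, so $u_{LU}^{b_U R}$ is a non-equivariant equivalence and gives the sought $b_U R$-orientation. The only substantive obstacle is the parity bookkeeping for the differentials; everything else is formal from Lemma~\ref{lem-cofree-orientation} and the standard computation of $H^*(BU;\Z)$.
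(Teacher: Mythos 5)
Your proposal is correct and follows essentially the same route as the paper: the homotopy fixed points spectral sequence with simple coefficients (by connectedness of $U$), the evenness of $H^*(BU;-)$ and of $\pi_*R$ forcing the parity obstruction on differentials, and Lemma~\ref{lem-cofree-orientation} to convert the permanent cycle $1 \in E_2^{0,d}$ into an orientation. The only cosmetic difference is that you deduce collapse of the whole spectral sequence at $E_2$, whereas the paper only checks that no differentials enter or leave the bidegree $(0,d)$; both are immediate from the same parity bookkeeping.
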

\begin{proof}
 Let us write $V=LU$ and $d$ for the dimension of $V$.
 Consider the homotopy fixed points spectral sequence
 \[
 E_2^{s,t}=H^{s}(BU; \pi_t(R \wedge S^V)) \Rightarrow \pi_{t-s}^U(b_U R \wedge S^V).
 \]
 Note that $\pi_0(U)$ acts trivially on $\pi_t(R \wedge S^V)$ since $U$ is connected. 
 Consider the element
 \[
 1 \in \pi_0(R)\simeq \pi_d(R \wedge S^V)= H^0(BU;\pi_d(R \wedge S^V)).
 \]
 We claim that this is a permanent cycle and a similar argument as in 
 Lemma~\ref{lem-k-connected} then gives the proposition. To prove the 
 claim note that $H^*(U; \Z)$ is polynomial on the Chern classes so it is nonzero 
 only in non-negative even degrees. Therefore we have a first and fourth quadrant spectral 
 sequence with zeroes in odd columns. If $d$ is odd (resp., even), 
 then the spectral sequence is also zero in even (resp., odd) rows since $R$ is concentrated in even degrees.
 The differentials on the $E_r$-page are of the form 
 $d_r \colon E_r^{s,t}\to E_r^{s+r,t+r-1}$ for all $r\geq 2$. 
 The claim follows by observing that on each page, there are no nonzero differentials 
 starting at or arriving into $(0,d)$.
\end{proof}

\begin{prop}\label{prop:LULGorientable}
Let $R$ be a commutative ring spectrum with $\pi_*R$ even, and $G$ be a compact Lie group.
Choose an embedding $i\colon G \to U$ into a unitary group.
If $LG$ is $b_GR$-orientable, then $L(U,G)$ is $b_G R$-orientable.
\end{prop}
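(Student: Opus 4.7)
The plan is to exploit the short exact sequence of $G$-representations
\[ 0 \to LG \to \res^U_G LU \to L(U,G) \to 0, \]
obtained from the identification $\Ad(U,G) = LU/LG \cong L(U,G)$ from Definition~\ref{defn-representations}. Since $G$ is a compact Lie group this sequence of real representations splits, so we get a $G$-equivariant isomorphism $\res^U_G LU \cong LG \oplus L(U,G)$, and therefore an equivalence of $G$-spectra
\[ S^{\res^U_G LU} \simeq S^{LG} \wedge S^{L(U,G)}. \]

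The next step would be to check that $\res^U_G LU$ is $b_GR$-orientable. Lemma~\ref{lem:LUorientable} furnishes a $b_UR$-orientation of $LU$, that is, a $U$-equivariant $b_UR$-module equivalence $b_UR \wedge S^{|LU|} \simeq b_UR \wedge S^{LU}$. Restricting along $G \hookrightarrow U$ is symmetric monoidal and sends $b_UR$ to $b_GR$ by Lemma~\ref{lem:omnibusBorel}(b), so this yields a $G$-equivariant $b_GR$-module equivalence $b_GR \wedge S^{|LU|} \simeq b_GR \wedge S^{\res^U_G LU}$.

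Combining the previous two steps with the hypothesised $b_GR$-orientation of $LG$, which makes $S^{LG}$ interchangeable with $S^{|LG|}$ as a $b_GR$-module (and hence $S^{-LG}$ with $S^{-|LG|}$), I obtain a chain of $b_GR$-module equivalences
\[ b_GR \wedge S^{L(U,G)} \simeq b_GR \wedge S^{-LG} \wedge S^{\res^U_G LU} \simeq b_GR \wedge S^{-|LG|} \wedge S^{|LU|} \simeq b_GR \wedge S^{|L(U,G)|}, \]
using that $|LU|-|LG|=|L(U,G)|$. The last thing to verify is that such an abstract module equivalence really comes from a class $u_{L(U,G)} \in \pi^G_{|L(U,G)|}(b_GR \wedge S^{L(U,G)})$ in the sense of Definition~\ref{def-cx stable}; this is automatic by Lemma~\ref{lem-cofree-orientation} because $b_GR$ is cofree.

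The main obstacle is mostly bookkeeping: one needs to track how representation-sphere equivalences restrict along $G \hookrightarrow U$ and combine along a split exact sequence of representations. The substantive content is already packaged in Lemma~\ref{lem:LUorientable}, and the promotion from an abstract $b_GR$-module equivalence to a genuine orientation class is a formal consequence of cofreeness, so nothing delicate arises.
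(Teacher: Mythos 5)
Your argument is correct and is essentially the paper's proof: both use the identification $\Ad(U,G)\cong L(U,G)$, restrict the $b_UR$-orientation of $LU$ from Lemma~\ref{lem:LUorientable} along $i$ to orient $i^*LU$ over $b_GR$, and then cancel the $LG$ factor of the (split) decomposition $i^*LU\cong LG\oplus L(U,G)$ using the hypothesis, the paper phrasing this cancellation via a cofibre sequence of $b_GR$-modules rather than your explicit splitting. The only cosmetic point is the last step: since your chain of equivalences is already an equivariant $b_GR$-module equivalence, the remark in Definition~\ref{def-cx stable} (precompose with the unit) gives the orientation class directly, so the appeal to cofreeness via Lemma~\ref{lem-cofree-orientation} is not needed.
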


\begin{proof}
 Recall that we have an isomorphism of $G$-representations $\Ad(U,G)\simeq L(U,G)$ 
 so it suffices to show that $\Ad(U,G)$ is $b_GR$-orientable. Note that 
 we have a cofibre sequence of $b_GR$-modules
 \[
 b_GR \wedge S^{LG} \to b_GR \wedge S^{i^* LU} \to b_GR \wedge S^{\Ad(U,G)}
 \]
 and that a choice of a $b_UR$-orientation for $LU$ induces  
 a $b_GR$-orientation for $i^*LU$ by restriction. The claim then follows from the 
 cofibre sequence and the assumption that $LG$ is $b_GR$-orientable, since $LU$ is $b_UR$-orientable
 by Lemma~\ref{lem:LUorientable}.
\end{proof}

\section{Generating modules over Borel spectra}
The goal of this section is to prove that for special choices of groups $G$, the category of $b_G R$-modules is generated by its unit whenever $R$ is complex orientable. This is a special feature of Borel cohomology since a priori the module category is generated by $G/H_+ \wedge b_G R$ for all closed subgroups $H\leq G$. 
The idea of the proof is heavily influenced by the argument given by Greenlees~\cite[\S 5]{Greenlees2020} in the case when $R$ is a discrete commutative ring. 

\begin{lem}\label{lem:torus}
Let $R$ be a complex orientable commutative ring spectrum and let $T$ be a torus. Then the $T$-spectrum $b_TR$ generates its category of modules.
\end{lem}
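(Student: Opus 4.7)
The plan is a two-step argument: first reduce to the case that $H$ is a subtorus, and then handle subtori by induction on rank. Throughout I will use that $\mod{b_TR}$ is compactly generated by the objects $T/H_+ \wedge b_TR$ for $H \leq T$ closed, so it suffices to prove each such object lies in $\mrm{Loc}(b_TR)$.

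\textbf{Step 1: reduction to subtori.} For a general closed subgroup $H \leq T$, let $H_0$ denote the identity component (a subtorus of $T$) and write $F = H/H_0$ for the finite component group. Viewing $F$ as a closed subgroup of $T/H_0$, its right-translation action on $T/H_0$ is free and commutes with the $T$-translation action. Since $F$ acts freely, the quotient $(T/H_0)/F$ is $T$-equivariantly $T/H$, and this is captured stably by $F$-homotopy orbits, giving
\[
(T/H)_+ \wedge b_TR \simeq \bigl((T/H_0)_+ \wedge b_TR\bigr)_{hF}
\]
as $b_TR$-modules. Since $F$-homotopy orbits are a colimit (hence preserve $\mrm{Loc}(b_TR)$), this reduces the claim to the subtorus case $H = H_0$.

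\textbf{Step 2: subtori by induction on rank.} I induct on the rank $n$ of $T$, with the rank-zero case being trivial. Given a subtorus $H$ of a rank-$n$ torus $T$, if $H = T$ there is nothing to show; otherwise $T/H$ is a nontrivial torus and so admits a primitive character, whose pullback along $T \twoheadrightarrow T/H$ is a primitive character $\chi\colon T \to S^1$ with $H \leq \ker(\chi) =: K$, a subtorus of rank $n-1$. The inductive hypothesis applied to the pair $(K,H)$ gives $K/H_+ \wedge b_KR \in \mrm{Loc}(b_KR)$. Applying the colimit-preserving functor $\mrm{ind}_K^T$, together with $\mrm{res}^T_K b_TR \simeq b_KR$ from Lemma~\ref{lem:omnibusBorel} and the projection formula, we obtain
\[
T/H_+ \wedge b_TR \simeq \mrm{ind}_K^T(K/H_+ \wedge b_KR) \in \mrm{Loc}(\mrm{ind}_K^T b_KR) = \mrm{Loc}(T/K_+ \wedge b_TR),
\]
where the last identification uses the projection formula again with $Y = S^0$ and the fact that $T$ is abelian. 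Finally, as $T$ is connected the nontrivial character $\chi$ is surjective, so $S(V_\chi) \simeq T/K$ as a $T$-space, and the cofibre sequence $S(V_\chi)_+ \to S^0 \to S^{V_\chi}$ smashed with $b_TR$ shows $T/K_+ \wedge b_TR \in \mrm{Thick}(b_TR)$ once we identify $S^{V_\chi} \wedge b_TR \simeq \Sigma^2 b_TR$ via the complex-orientability of $R$ (Lemma~\ref{lem-representation-borel-oriented}).

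\textbf{Main obstacle.} The central subtlety is to arrange the induction so that the kernel of the chosen character remains a torus at every step. Kernels of arbitrary nontrivial characters of $T$ can be disconnected (e.g.\ $\chi(z,w)=z^2w^2$ on $(S^1)^2$ has $\ker(\chi) = S^1 \sqcup S^1$), and for some closed subgroups there is no primitive character vanishing on them (e.g.\ $C_2 \leq S^1$). Absorbing the finite component group $F = H/H_0$ via homotopy orbits before running the rank induction sidesteps both issues: in Step 2 one always has a subtorus $H$, and so one can pull back a primitive character of the torus $T/H$; the resulting character is primitive in $T$ because the sublattice of characters trivial on $H$ is saturated when $H$ is connected, so its kernel is automatically a subtorus of rank $n-1$.
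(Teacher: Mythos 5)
Your Step~1 is where the argument breaks, and the gap is not a technicality but the heart of the matter. The claimed equivalence $(T/H)_+ \wedge b_TR \simeq ((T/H_0)_+ \wedge b_TR)_{hF}$ is false as an equivalence of $b_TR$-modules in genuine $T$-spectra. The comparison map from the homotopy orbits to $(T/H)_+\wedge b_TR$ is an underlying (non-equivariant) equivalence, but equivalences of $b_TR$-modules are detected on all genuine homotopy groups $\pi_*^K$, and being a module over the cofree ring $b_TR$ does not make a spectrum cofree (e.g.\ $\widetilde{E}T\wedge b_TR$), so you cannot pass from a free quotient to homotopy orbits after smashing with $b_TR$. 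Concretely, take $T=S^1$, $H=C_2$, $R=H\mathbb{F}_2$ (or $KU$). The right-hand side $(T_+\wedge b_TR)_{hF}$ is a homotopy colimit of the free spectrum $T_+\wedge b_TR$, hence lies in $\mrm{Loc}(T_+)$ and has vanishing geometric fixed points $\Phi^{C_2}$, since $T^{C_2}=\emptyset$. The left-hand side does not: $(T/C_2)^{C_2}=T/C_2$, so $\Phi^{C_2}\bigl((T/C_2)_+\wedge b_TR\bigr)\simeq (T/C_2)_+\wedge \Phi^{C_2}(b_{C_2}R) = (T/C_2)_+\wedge R^{tC_2}$, and the Tate construction $R^{tC_2}$ is nonzero (its homotopy is $\mathbb{F}_2[t^{\pm1}]$ for $R=H\mathbb{F}_2$, and contains $\Z_2^\wedge$ for $R=KU$). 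This genuine-versus-Borel mismatch (fixed points versus orbits) is exactly what makes the lemma nontrivial, and without Step~1 your Step~2 only treats subtori, whereas the essential cases are the cells $T/H_+\wedge b_TR$ for $H$ with nontrivial component group (e.g.\ finite subgroups of $S^1$).

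The reduction was also unnecessary, and its motivation rests on a misconception: since $T$ is connected, $T/H$ is a positive-dimensional torus for \emph{every} proper closed subgroup $H$, and you never need the character to be primitive --- only surjective, which any nontrivial character of a torus is. For $C_2\le S^1$ the character $z\mapsto z^2$ is surjective with kernel exactly $C_2$, so $T/C_{2+}\wedge b_TR$ is finitely built from $b_TR$ directly from the cofibre sequence $S(V_\chi)_+\to S^0\to S^{V_\chi}$ and the Thom isomorphism of Lemma~\ref{lem-representation-borel-oriented}. The genuine wrinkle is the one you tried to avoid: the kernel $K=\ker(\chi)$ may be disconnected, so an induction that stays inside the class of tori does not close. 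The standard repair --- essentially the argument of Greenlees~\cite{Greenlees2020} that the paper invokes, with complex orientability supplying the Thom isomorphism --- is to induct on $\dim(T/H)$ for pairs: pull back a surjective character from a circle factor of the torus $T/H$, so that $H\le K$ with $K/H$ a torus of one smaller dimension; the Gysin sequence puts $T/K_+\wedge b_TR$ in $\mrm{Thick}(b_TR)$, and your (correct) mechanism $\mrm{ind}_K^T(K/H_+\wedge b_KR)\simeq T/H_+\wedge b_TR$ via $\mrm{res}^T_K b_TR\simeq b_KR$ (Lemma~\ref{lem:omnibusBorel}) and the projection formula reduces to the pair $(K,H)$; since every pair encountered has connected quotient, the induction closes even though $K$ itself may be disconnected. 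As written, your proposal does not prove the lemma.
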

\begin{proof}
We can just follow the argument given in~\cite[5.1]{Greenlees2020}. The crucial ingredient is that Borel cohomology $b_T R$ has Thom isomorphisms as it is complex orientable, see Lemma~\ref{lem-representation-borel-oriented}. 
\end{proof}

Recall our use of terminology of building from the conventions section. 
\begin{thm}\label{thm:generates}
Let $G$ be a compact Lie group, $T$ be a maximal torus for $G$ and 
$R$ be a complex orientable commutative ring spectrum. 
If $b_GR$ and $DG/T_+ \wedge b_GR$ build each other, then $b_GR$ generates its category of modules.
\end{thm}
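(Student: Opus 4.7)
The plan is to reduce the problem to the torus case (Lemma~\ref{lem:torus}) via restriction and coinduction along the inclusion $T \hookrightarrow G$. The hypothesis that $b_G R$ and $DG/T_+\wedge b_G R$ build each other in $\mod{b_G R}$ should be thought of as a form of descent which allows the generation property known for $b_T R$-modules to be transferred to $b_G R$-modules.

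First, let $M$ be an arbitrary $b_G R$-module. By Lemma~\ref{lem:omnibusBorel}(b), restriction gives $\res^G_T b_G R \simeq b_T R$, so $\res^G_T M$ is naturally a $b_T R$-module. Lemma~\ref{lem:torus} tells us $\res^G_T M \in \Loc_{b_T R}(b_T R)$. Now apply the coinduction functor $\mrm{coind}_T^G$: by the Wirthm\"uller isomorphism it is equivalent to $\mrm{ind}_T^G(S^{-L(G,T)}\wedge -)$, hence it preserves all colimits and cofibre sequences. Using Lemma~\ref{lem:omnibusBorel}(b) to identify $\mrm{coind}_T^G b_T R \simeq DG/T_+ \wedge b_G R$, together with the projection formula $\mrm{coind}_T^G \res^G_T M \simeq DG/T_+ \wedge M$, we deduce
\[
DG/T_+ \wedge M \in \Loc_{b_G R}(DG/T_+ \wedge b_G R).
\]
One half of the hypothesis gives $DG/T_+ \wedge b_G R \in \Loc_{b_G R}(b_G R)$, and hence $DG/T_+ \wedge M \in \Loc_{b_G R}(b_G R)$.

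For the other direction, the remaining half of the hypothesis gives $b_G R \in \Loc_{b_G R}(DG/T_+ \wedge b_G R)$. The functor $-\otimes_{b_G R} M$ is exact and preserves colimits, so applying it to this building relation yields
\[
M \;=\; b_G R \otimes_{b_G R} M \;\in\; \Loc_{b_G R}\bigl((DG/T_+ \wedge b_G R)\otimes_{b_G R} M\bigr)\;=\;\Loc_{b_G R}(DG/T_+ \wedge M).
\]
Combining the two inclusions, $M \in \Loc_{b_G R}(DG/T_+ \wedge M) \subseteq \Loc_{b_G R}(b_G R)$. Since $M$ was arbitrary, this proves $\Loc_{b_G R}(b_G R) = \mod{b_G R}$, i.e.\ $b_G R$ generates.

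The main obstacle, if any, is simply verifying the standard principle used twice above: that $X \in \Loc(Y)$ implies $F(X) \in \Loc(F(Y))$ for any colimit-preserving exact functor $F$. Here this has to be invoked both for $F = \mrm{coind}_T^G$ (to push the torus generation statement up to $G$) and for $F = -\otimes_{b_G R} M$ (to propagate the hypothesized building of $b_G R$ by $DG/T_+ \wedge b_G R$ out to $M$). Both are routine once the colimit-preservation of coinduction via Wirthm\"uller is noted. The genuine input of the theorem is therefore concentrated in Lemma~\ref{lem:torus} and the hypothesis, with the rest being a Morita/descent-style formality.
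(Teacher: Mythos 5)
Your argument is correct, but it takes a genuinely different route from the paper's. The paper argues by orthogonality: it takes a $b_GR$-module $M$ with $\pi_*^G M = 0$, uses the half of the hypothesis that $b_GR$ builds $DG/T_+ \wedge b_GR$ (tensored with $M$) to deduce $\pi_*^T M = 0$, invokes Lemma~\ref{lem:torus} to conclude $\res^G_T M \simeq 0$ and hence $\pi_*^K M = 0$ for all $K \leq T$, then uses a double coset formula to see that $\pi_*^H(DG/T_+ \wedge M) = 0$ for every closed $H \leq G$, so $DG/T_+ \wedge M \simeq 0$, whence $M \simeq 0$ by the other half of the hypothesis. You instead show directly that every $M$ lies in $\Loc_{b_GR}(b_GR)$ by transporting the torus-level generation statement through coinduction: $\res^G_T M \in \Loc_{b_TR}(b_TR)$, coinduction is exact and coproduct-preserving by Wirthm\"uller, so $DG/T_+ \wedge M \in \Loc_{b_GR}(DG/T_+ \wedge b_GR)$, and the two halves of the hypothesis then sandwich $M$ into $\Loc_{b_GR}(b_GR)$; this verifies the definition of generation directly, without passing through a vanishing criterion. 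Your route avoids the double coset formula (which is somewhat delicate for compact Lie groups) and the detection of equivalences on $\pi_*^H$ for all closed subgroups. The price is that you need $\mrm{coind}_T^G$ as a functor $\mod{b_TR} \to \mod{b_GR}$, and you need the identifications $\mrm{coind}_T^G\, b_TR \simeq DG/T_+ \wedge b_GR$ and $\mrm{coind}_T^G \res^G_T M \simeq DG/T_+ \wedge M$ of Lemma~\ref{lem:omnibusBorel} to hold as equivalences of $b_GR$-modules, not merely of $G$-spectra. This is true and standard---coinduction is lax symmetric monoidal as the right adjoint of the symmetric monoidal restriction functor, and the projection formula is compatible with the resulting module structures---but it is the one point your write-up should make explicit, since the cited lemma is stated only at the level of $G$-spectra. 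The paper's approach, by working entirely with equivariant homotopy groups, sidesteps these structured refinements at the cost of the double coset computation.
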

\begin{proof}
Let $M$ be a $b_GR$-module and suppose that $\pi_*^GM = 0$. Since $b_GR$ builds $DG/T_+ \wedge b_GR$, applying $- \otimes_{b_GR} M$ shows that $M$ builds $DG/T_+ \wedge M$. Using Lemma~\ref{lem:omnibusBorel}(a) and the assumption $\pi_*^G M=0$, we deduce that $\pi_*^TM = 0$, and hence $\pi_*^KM = 0$ for all $K \leq T$ by Lemma~\ref{lem:torus}. 

Consider a closed subgroup $H \leq G$. By the double coset formula together with Lemma~\ref{lem:omnibusBorel}(a) and the previous paragraph, we have
\[\pi_*^H(DG/T_+ \wedge M) = \prod_{g \in T \backslash G/ H} \pi_*^{T \cap H^g}(M)=0.\]
Therefore $DG/T_+ \wedge M \simeq 0$, and since this builds $M$, we must also have $M \simeq 0$ as required.
\end{proof}

Theorem~\ref{thm:generates} becomes powerful when coupled with the following lemma and unipotence results of Mathew-Naumann-Noel~\cite{MathewNaumannNoel17}.

\begin{lem}\label{lem-flag-variety}
  Let $G$ be a compact Lie group, $H$ be a subgroup of $G$ and $R$ be a complex orientable commutative ring spectrum. The following are equivalent:
\begin{itemize}
  \item[(a)] $DG/H_+ \wedge R$ is free over $R$ as objects 
  of $\Fun(BG, \mod{R})$.
  \item[(b)] $DG/H_+ \wedge b_GR$ is free over $b_GR$ as objects of $\mod{b_GR}$.
  \end{itemize}
\end{lem}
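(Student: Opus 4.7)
The plan is to reduce the lemma to Proposition~\ref{prop-borel}. The functor $(-)_u \colon \mod{b_GR} \to \Fun(BG,\mod{R})$ is symmetric monoidal and sends the unit $b_GR$ to the unit $R$. Since $DG/H_+$ is inflated from a non-equivariant spectrum, it commutes with $(-)_u$ in the sense that $(DG/H_+ \wedge b_GR)_u \simeq DG/H_+ \wedge R$ in $\Fun(BG,\mod{R})$, and similarly any finite sum of suspensions $\bigoplus_{i=1}^{n} \Sigma^{n_i} b_GR$ maps to $\bigoplus_{i=1}^{n} \Sigma^{n_i} R$.

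The first key step is to check that the relevant modules are compact, so that Proposition~\ref{prop-borel} applies. Since $H \leq G$ is a closed subgroup of a compact Lie group, the homogeneous space $G/H$ is a compact smooth manifold and hence $G/H_+$ has the homotopy type of a finite pointed CW-complex. Consequently $DG/H_+$ is a compact (indeed dualizable) spectrum, so both $DG/H_+ \wedge b_GR$ and any finite sum $\bigoplus_{i=1}^n \Sigma^{n_i} b_GR$ are compact objects of $\mod{b_GR}$. If one interprets ``free'' as allowing a possibly infinite indexing set, compactness of $DG/H_+ \wedge b_GR$ (respectively $DG/H_+ \wedge R$) forces the indexing set to be finite, so no generality is lost by working with finite sums.

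For the implication (b) $\Rightarrow$ (a), suppose $DG/H_+ \wedge b_GR \simeq \bigoplus_{i=1}^{n} \Sigma^{n_i} b_GR$ in $\mod{b_GR}$. Applying the functor $(-)_u$, which preserves equivalences, finite sums and suspensions, yields $DG/H_+ \wedge R \simeq \bigoplus_{i=1}^n \Sigma^{n_i} R$ in $\Fun(BG,\mod{R})$. For the converse (a) $\Rightarrow$ (b), suppose we have an equivalence $\phi \colon DG/H_+ \wedge R \xrightarrow{\sim} \bigoplus_{i=1}^n \Sigma^{n_i} R$ in $\Fun(BG,\mod{R})$. Both $DG/H_+ \wedge b_GR$ and $\bigoplus_{i=1}^n \Sigma^{n_i} b_GR$ are compact in $\mod{b_GR}$ and their images under $(-)_u$ are $DG/H_+ \wedge R$ and $\bigoplus_{i=1}^n \Sigma^{n_i} R$ respectively. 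By the full faithfulness on compact objects established in Proposition~\ref{prop-borel}, the equivalence $\phi$ lifts (uniquely) to a map $\widetilde{\phi}$ in $\mod{b_GR}$, and the same full faithfulness reflects the property of being an equivalence, so $\widetilde{\phi}$ is itself an equivalence.

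There is no real obstacle here: the entire argument is a formal manipulation once compactness of $DG/H_+$ is recognised, so the substance of the lemma is simply that Proposition~\ref{prop-borel} lets us test freeness of $DG/H_+ \wedge b_GR$ at the level of the underlying spectrum with its residual $G$-action.
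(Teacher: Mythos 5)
Your argument is correct and takes essentially the same route as the paper: both rest on Proposition~\ref{prop-borel} (full faithfulness of $(-)_u$ on compact $b_GR$-modules), compactness of $b_GR$ and $DG/H_+ \wedge b_GR$, and the identification of their images under $(-)_u$. One minor inaccuracy: $DG/H_+$ is not inflated from a non-equivariant spectrum; the identification $(DG/H_+ \wedge b_GR)_u \simeq DG/H_+ \wedge R$ instead follows because $(-)_u$ is symmetric monoidal and $G/H_+$ is dualizable (this is the role of Lemma~\ref{lem:omnibusBorel}(b) in the paper's proof), which does not affect your argument.
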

\begin{proof}
By Proposition~\ref{prop-borel}, on compact $b_GR$-modules the functor $(-)_u$ is an equivalence onto its essential image. Since both $b_GR$ and $DG/H_+ \wedge b_GR$ are compact $b_GR$-modules the result follows, as $(-)_u$ sends $b_GR$ and $DG/H_+ \wedge b_GR$ to $R$ and $DG/H_+\wedge R$ respectively by Lemma~\ref{lem:omnibusBorel}(b).
\end{proof}

Combining the previous results, we obtain the following which is a crucial ingredient in our proof of local Gorenstein duality for cochain spectra.
\begin{thm}\label{thm:unitary}
Let $R$ be a complex orientable commutative ring spectrum and $U$ be a unitary group. Then $b_UR$ generates its category of modules.
\end{thm}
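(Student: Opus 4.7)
The natural strategy is to invoke Theorem~\ref{thm:generates} with $G = U$ and $T \leq U$ a maximal torus. To apply that theorem, I must exhibit that $b_UR$ and $DU/T_+ \wedge b_UR$ build each other as $b_UR$-modules. In fact I will aim for the stronger statement that $DU/T_+ \wedge b_UR$ is \emph{free} over $b_UR$, i.e.\ equivalent to a wedge of suspensions of $b_UR$. This trivially gives that $b_UR$ builds $DU/T_+ \wedge b_UR$, and since at least one wedge summand will be $b_UR$ itself (coming from the class of a point in $U/T_+$), the reverse building direction follows by retract.

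By Lemma~\ref{lem-flag-variety}, the claim that $DU/T_+ \wedge b_UR$ is free over $b_UR$ is equivalent to showing that $DU/T_+ \wedge R$ is free over $R$ as an object of $\Fun(BU, \mod{R})$. The complex flag variety $U/T$ has a Schubert cell decomposition with only even-dimensional cells, so $H^*(U/T;\Z)$ is free and concentrated in even degrees; moreover the tangent bundle of $U/T$ is complex. Complex orientability of $R$ therefore supplies a Thom isomorphism $DU/T_+ \wedge R \simeq \Sigma^{-d}(U/T_+ \wedge R)$ where $d = \dim U - \dim T$, and the Atiyah--Hirzebruch spectral sequence collapses to give $R_*(U/T)$ free of the expected rank over $R_*$. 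This gives the desired statement on the level of homotopy groups.

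The main obstacle is to upgrade this calculation of homotopy groups to an actual splitting inside $\Fun(BU, \mod{R})$: a priori the $U$-action could be nontrivial. Here I will invoke the unipotence results of Mathew--Naumann--Noel~\cite{MathewNaumannNoel17}, which for complex orientable $R$ and a unitary group $U$ ensure that the relevant Borel $U$-action on $U/T_+ \wedge R$ is trivializable, so that $U/T_+ \wedge R$ splits as a sum of copies of $R$ (with trivial $U$-action) in $\Fun(BU, \mod{R})$. Identifying the precise form of this unipotence result and checking its hypotheses for $(U, T, R)$ is the one non-formal step. Once this splitting is in hand, Lemma~\ref{lem-flag-variety} transports it to $\mod{b_UR}$, the hypotheses of Theorem~\ref{thm:generates} are verified, and we conclude that $b_UR$ generates $\mod{b_UR}$.
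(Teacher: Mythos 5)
Your proposal is correct and follows essentially the same route as the paper: reduce via Theorem~\ref{thm:generates} and Lemma~\ref{lem-flag-variety} to showing that $DU/T_+ \wedge R$ is free over $R$ in $\Fun(BU,\mod{R})$, and obtain that freeness from the unipotence results of Mathew--Naumann--Noel. The ``one non-formal step'' you flag is precisely~\cite[7.49]{MathewNaumannNoel17}, which gives $DU/T_+ \wedge R \simeq \bigoplus_{i=1}^{n!}\Sigma^{k(i)}R$ in $\Fun(BU,\mod{R})$, so your plan closes with no gap.
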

\begin{proof}
In~\cite[7.49]{MathewNaumannNoel17} it is proved that $DU/T_+ \wedge R \simeq \bigoplus_{i=1}^{n!} \Sigma^{k(i)} R$ as objects of $\Fun(BU, \mod{R})$, where $n = \mathrm{rank}(U)$. The result then follows by applying Theorem~\ref{thm:generates} and Lemma~\ref{lem-flag-variety}.
\end{proof}

\begin{rem}
One can also prove directly that $DU/T_+ \wedge b_UR$ is free over $b_UR$, see the discussion after~\cite[7.49]{MathewNaumannNoel17}. 
\end{rem}

Although we are most interested in the case of unitary groups for the purpose of this paper, we note that one can also deduce the following more general result for the complex $K$-theory spectrum $KU$.
\begin{cor}
Let $G$ be a connected compact Lie group with $\pi_1G$ torsion free. Then $b_G(KU)$ generates its category of modules.
\end{cor}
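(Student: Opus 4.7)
The plan is to reduce the statement to Theorem~\ref{thm:generates} applied with $R = KU$. Choosing a maximal torus $T \leq G$, it suffices to show that $b_G(KU)$ and $DG/T_+ \wedge b_G(KU)$ build each other. By Lemma~\ref{lem-flag-variety}, this is equivalent to establishing that $DG/T_+ \wedge KU$ is free as an object of $\Fun(BG, \mod{KU})$; for if $DG/T_+ \wedge b_G(KU) \simeq \bigoplus_i \Sigma^{k(i)} b_G(KU)$, then each object is a retract of (a summand of) the other, in particular $b_G(KU)$ and $DG/T_+ \wedge b_G(KU)$ build each other. This is exactly the structure of the proof of Theorem~\ref{thm:unitary}, which handles the unitary group case.

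The core of the argument is thus to exhibit a $G$-equivariant splitting
\[
DG/T_+ \wedge KU \simeq \bigoplus_{w \in W} \Sigma^{k(w)} KU
\]
in $\Fun(BG, \mod{KU})$, where $W = N_G(T)/T$ is the Weyl group. The torsion-free hypothesis on $\pi_1(G)$ is the classical condition, going back to Hodgkin and Pittie, under which the complex $K$-theory of the flag variety $G/T$ splits into a sum of copies indexed by $W$; the non-equivariant splitting is already suggested by the fact that $G/T$ has a Schubert cell decomposition in only even real dimensions, so that the Atiyah–Hirzebruch spectral sequence for $KU^*(G/T)$ collapses.

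What is needed beyond the non-equivariant splitting is its upgrade to $\Fun(BG, \mod{KU})$. This is precisely the content of a $KU$-unipotence statement for connected compact Lie groups with torsion-free fundamental group, of the same flavor as the result of Mathew–Naumann–Noel~\cite[7.49]{MathewNaumannNoel17} used in Theorem~\ref{thm:unitary}. Once this equivariant splitting is invoked, Lemma~\ref{lem-flag-variety} and Theorem~\ref{thm:generates} assemble into the conclusion.

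The main obstacle is the equivariant upgrade: the non-equivariant splitting of $KU \wedge DG/T_+$ is standard under the $\pi_1(G)$-torsion-free hypothesis, but one must check that this splitting can be chosen compatibly with the residual $G$-action. This step is where the torsion-free condition on $\pi_1(G)$ does its real work, and it is the place where a citation to the Hodgkin–Pittie theorem, or equivalently to the unipotence results of~\cite{MathewNaumannNoel17}, will be essential.
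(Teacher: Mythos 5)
Your argument is correct and follows the paper's proof essentially verbatim: reduce via Lemma~\ref{lem-flag-variety} and Theorem~\ref{thm:generates} to the freeness of $DG/T_+ \wedge KU$ in $\Fun(BG,\mod{KU})$, which is supplied by the unipotence results of Mathew--Naumann--Noel. The precise reference you leave open is~\cite[8.17]{MathewNaumannNoel17}, which is exactly what the paper cites.
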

\begin{proof}
By~\cite[8.17]{MathewNaumannNoel17}, $DG/T_+ \wedge KU$ is free over $KU$ in $\mrm{Fun}(BG, \mod{KU})$. Applying Theorem~\ref{thm:generates} and Lemma~\ref{lem-flag-variety} then gives the result.
\end{proof}

In~\cite{Greenlees2020}, Greenlees proves that if $R$ is an ordinary commutative ring, then $b_GR$ generates its category of modules if $G$ is $R$-connected (see Definition~\ref{defn:connected}) by comparison of the K\"unneth and Eilenberg-Moore spectral sequences. In light of this, it would be interesting to answer the following question.
\begin{que}\label{que:generates}
If $R$ is a complex orientable commutative ring spectrum and $G$ is a connected compact Lie group, does $b_GR$ generate its category of modules?
\end{que}

\section{Local Gorenstein duality for cochains}
In this section we will give conditions under which the canonical map 
$C^*(BG;R) \to C^*(BH;R)$ induced by the inclusion $H \leq G$ is relatively Gorenstein. 
We will then use this to prove local Gorenstein duality for $C^*(BG;R)$. 

Firstly we recall the following version of~\cite[3.3(i)]{Greenlees2020}.
\begin{lem}\label{lem:naturalmap}
Let $G$ be a compact Lie group, $R$ be a commutative ring spectrum and $M, N \in \mod{b_GR}$. The natural map induced 
by taking $G$-fixed points
\[\Hom_{b_GR}(M,N)^G \to \Hom_{C^*(BG;R)}(M^G, N^G)\]
is an equivalence if $M$ is built from $b_GR$.
\end{lem}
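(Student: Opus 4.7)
The plan is to run a localizing subcategory argument in the first variable $M$. Let $\C$ denote the full subcategory of $\mod{b_GR}$ consisting of those $M$ for which the natural map displayed in the statement is an equivalence of $C^*(BG;R)$-modules. Since ``built from $b_GR$'' means $M \in \Loc(b_GR)$, and $\Loc(b_GR)$ is by definition the smallest localizing subcategory containing $b_GR$, it suffices to verify three things: that $b_GR \in \C$, that $\C$ is thick, and that $\C$ is closed under arbitrary coproducts.

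For the base case $M = b_GR$, the LHS reduces to $\Hom_{b_GR}(b_GR, N)^G \simeq N^G$, while Lemma~\ref{lem:fixedpointscoind}(a) identifies $(b_GR)^G$ with $C^*(BG;R)$, so the RHS becomes $\Hom_{C^*(BG;R)}(C^*(BG;R), N^G) \simeq N^G$; one checks that the natural map is the identity under these identifications. For thickness, note that both $\Hom_{b_GR}(-,N)^G$ and $\Hom_{C^*(BG;R)}((-)^G, N^G)$ are contravariant exact functors of $M$ (each being a composite of exact pieces, since $(-)^G$ is exact on $G$-spectra), so they send fibre sequences to cofibre sequences; hence $\C$ is closed under suspensions, cofibres, and retracts.

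The substantive step is closure under arbitrary coproducts, which rests on the fact that $(-)^G \colon \Sp_G \to \Sp$ commutes with coproducts. This holds because $S^0$ is compact in $\Sp_G$ when $G$ is a compact Lie group (indeed, the orbits $G/H_+$ form a compact generating set). Given a family $\{M_i\}_{i\in I}$ of objects of $\C$, one then computes
\[
\Hom_{b_GR}\Bigl(\bigoplus_i M_i, N\Bigr)^G \simeq \Bigl(\prod_i \Hom_{b_GR}(M_i, N)\Bigr)^G \simeq \prod_i \Hom_{b_GR}(M_i, N)^G,
\]
using that internal hom converts coproducts to products in the first variable and that $(-)^G$, as a right adjoint, preserves products. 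On the other side, distributing $(-)^G$ over the coproduct gives
\[
\Hom_{C^*(BG;R)}\!\Bigl(\bigl(\textstyle\bigoplus_i M_i\bigr)^G, N^G\Bigr) \simeq \Hom_{C^*(BG;R)}\!\Bigl(\bigoplus_i M_i^G, N^G\Bigr) \simeq \prod_i \Hom_{C^*(BG;R)}(M_i^G, N^G).
\]
Under these identifications the natural map becomes the product of the natural maps for the individual $M_i$, each an equivalence by assumption, so the whole map is an equivalence and $\bigoplus_i M_i \in \C$.

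The only nontrivial ingredient is the compactness of $S^0$ in $\Sp_G$, which powers the coproduct step; the remainder of the verification is purely formal.
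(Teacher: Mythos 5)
Your argument is correct and is essentially the paper's own proof: the paper likewise observes that the subcategory of $M$ for which the map is an equivalence is localizing and contains $b_GR$, leaving the verification implicit. You simply spell out the details, including the key point that $(-)^G$ preserves coproducts because the unit is compact in $\Sp_G$ for a compact Lie group.
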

\begin{proof}
Recall that $(b_GR)^G = C^*(BG;R)$ so the map is well-defined. The subcategory of $b_GR$-modules $M$ for which the natural map is an equivalence is localizing and clearly contains $M = b_GR$.
\end{proof}

\begin{lem}\label{lem:relGor}
Let $G$ be a compact Lie group, $R$ be a complex orientable commutative ring spectrum, and choose an embedding $G \to U$ into a unitary group. There is an equivalence of $C^*(BG;R)$-modules 
\[
\Hom_{C^*(BU;R)}(C^*(BG;R), C^*(BU;R)) \simeq C^*(BG^{-L(U,G)};R)
\]
where $L(U,G)$ is the tangent $G$-representation of $U/G$ from Definition~\ref{defn-representations}.
\end{lem}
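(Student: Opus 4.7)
The plan is to lift the whole statement to genuine $U$-equivariant spectra, where it becomes a question about $b_UR$-modules, and then apply the generation result for unitary groups together with Lemma~\ref{lem:naturalmap} to take fixed points.

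First, I would rewrite the three objects appearing in the statement using Lemma~\ref{lem:fixedpointscoind}. Part (a) gives $C^*(BU;R) \simeq (b_UR)^U$ as a commutative ring spectrum. Part (b), applied to the subgroup $G \leq U$, gives $C^*(BG;R) \simeq (b_UR \wedge DU/G_+)^U$ as a module over $C^*(BU;R)$. Finally, part (c) identifies
\[
\Hom_{b_UR}(b_UR \wedge DU/G_+,\, b_UR)^U \simeq C^*(BG^{-L(U,G)};R)
\]
as a $C^*(BU;R)$-module. With these identifications in place, the lemma reduces to producing an equivalence
\[
\Hom_{C^*(BU;R)}\bigl((b_UR \wedge DU/G_+)^U,\, (b_UR)^U\bigr) \simeq \Hom_{b_UR}(b_UR \wedge DU/G_+,\, b_UR)^U
\]
of $C^*(BG;R)$-modules.

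This is precisely the natural comparison map of Lemma~\ref{lem:naturalmap}, applied with ambient group $U$, with $M = b_UR \wedge DU/G_+$ and $N = b_UR$. The hypothesis of that lemma requires $M$ to be built from $b_UR$ in $\mod{b_UR}$; here is where I would invoke Theorem~\ref{thm:unitary}, which says that for the unitary group $U$ and a complex orientable $R$, the unit $b_UR$ generates its entire module category. In particular $M = b_UR \wedge DU/G_+$ lies in $\Loc(b_UR)$, so Lemma~\ref{lem:naturalmap} applies and gives the required equivalence. Stringing together the three identifications from Lemma~\ref{lem:fixedpointscoind} with this equivalence yields the stated formula; one should finally verify that the $C^*(BG;R)$-module structure induced by restriction along $C^*(BU;R) \to C^*(BG;R)$ is respected by each of the equivalences, which follows because each arises from a natural transformation compatible with the tensor-product structure.

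The main obstacle, and the step whose availability is not obvious, is the verification that $b_UR \wedge DU/G_+$ is built from $b_UR$, since a priori one only knows that $\mod{b_UR}$ is generated by the family $\{U/H_+ \wedge b_UR\}$ as $H$ ranges over closed subgroups of $U$. This single-generator phenomenon is a genuine special feature of the unitary group combined with complex orientability, encoded in Theorem~\ref{thm:unitary} (ultimately resting on the Mathew–Naumann–Noel splitting of $DU/T_+ \wedge R$). Once that input is accepted, the rest of the argument is formal.
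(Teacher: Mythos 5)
Your proposal is correct and follows essentially the same route as the paper's own proof: identify the three objects via Lemma~\ref{lem:fixedpointscoind}, reduce to the fixed-point comparison map of Lemma~\ref{lem:naturalmap} with $M = b_UR \wedge DU/G_+$ and $N = b_UR$, and verify the building hypothesis by Theorem~\ref{thm:unitary}. Nothing further is needed.
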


\begin{proof}
Recall from Lemma~\ref{lem:fixedpointscoind} that 
\[
\Hom_{b_UR}(b_UR\wedge DU/G_+, b_UR)^U \simeq C^*(BG^{-L(U,G)};R)
\]
and that $(b_UR \wedge DU/G_+)^U\simeq C^*(BG;R)$. Therefore the equivalence in the statement can be rephrased as the natural map induced by taking $U$-fixed points
\[
\Hom_{b_UR}(b_UR \wedge DU/G_+, b_UR)^U \to \Hom_{C^*(BU;R)}(C^*(BG;R), C^*(BU;R))\] 
being an equivalence. 
By Lemma~\ref{lem:naturalmap}, it is enough to show that $b_UR \wedge DU/G_+$ is built from $b_U R$. This holds by Theorem~\ref{thm:unitary}.
\end{proof}

We note that even though we only know that $b_GR$ generates its category of modules for $G$ a torus or unitary group, we can deduce the relative Gorenstein condition more generally using the ascent property of relative Gorenstein maps.
\begin{thm}\label{thm:relGor2}
Let $G$ be a compact Lie group, $H \leq G$, and $R$ be a commutative ring spectrum 
with $\pi_*R$ even. If $LG$ is $b_GR$-orientable and $LH$ is $b_HR$-orientable, then there is an equivalence of $C^*(BH;R)$-modules 
\[
\Hom_{C^*(BG;R)}(C^*(BH;R), C^*(BG;R)) \simeq \Sigma^{\dim(G/H)}C^*(BH;R).
\]
In other words, the map $C^*(BG;R) \to C^*(BH;R)$ is relatively Gorenstein of shift $\mrm{dim}(G/H)$.
\end{thm}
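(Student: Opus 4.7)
The plan is to reduce to the unitary case established in Lemma~\ref{lem:relGor} by invoking the ascent property for relatively Gorenstein maps (Lemma~\ref{lem:relGorcomposite}). Choose an embedding $H \leq G \hookrightarrow U$ into a unitary group and consider the resulting tower of ring maps
\[
C^*(BU;R) \longrightarrow C^*(BG;R) \longrightarrow C^*(BH;R).
\]
It will suffice to show that each of the composite $C^*(BU;R)\to C^*(BK;R)$ for $K\in\{G,H\}$ is relatively Gorenstein of shift $\dim(U/K)$. Lemma~\ref{lem:relGorcomposite} then identifies the shift of $C^*(BG;R)\to C^*(BH;R)$ as $\dim(U/H)-\dim(U/G)=\dim(G/H)$.

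For a fixed $K\in\{G,H\}$, Lemma~\ref{lem:relGor} produces an equivalence of $C^*(BK;R)$-modules
\[
\Hom_{C^*(BU;R)}(C^*(BK;R),C^*(BU;R)) \simeq C^*(BK^{-L(U,K)};R).
\]
So what remains is to identify the Thom spectrum cochains with an honest suspension of $C^*(BK;R)$. This is exactly where the orientability hypotheses enter. By Lemma~\ref{lem:LUorientable}, $LU$ is $b_UR$-orientable, and combined with the assumption that $LK$ is $b_KR$-orientable, Proposition~\ref{prop:LULGorientable} guarantees that $L(U,K)$ is $b_KR$-orientable.

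To convert a $b_KR$-orientation on $L(U,K)$ into the desired identification, I would unwind
\[
C^*(BK^{-L(U,K)};R) = \Hom(EK_+\wedge_K S^{-L(U,K)}, R) \simeq \Hom(EK_+\wedge S^{-L(U,K)}, R)^K,
\]
and then use dualizability of the representation sphere to pull $S^{-L(U,K)}$ past the cofree construction, obtaining $\Hom(EK_+\wedge S^{-L(U,K)}, R)\simeq b_KR\wedge S^{L(U,K)}$ as $K$-spectra. The orientation supplies an equivariant equivalence $b_KR\wedge S^{L(U,K)}\simeq \Sigma^{\dim(U/K)} b_KR$, and taking $K$-fixed points yields
\[
C^*(BK^{-L(U,K)};R)\simeq \Sigma^{\dim(U/K)}C^*(BK;R),
\]
as required.

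The main bookkeeping obstacle is the orientation step: carefully verifying that the $b_KR$-orientation of $L(U,K)$ descends, after taking fixed points, to an equivalence of $C^*(BK;R)$-modules with the right suspension shift. Once that identification is in hand, the rest is a formal application of Lemmas~\ref{lem:relGor} and~\ref{lem:relGorcomposite}.
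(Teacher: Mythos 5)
Your proposal is correct and follows essentially the same route as the paper: embed into a unitary group, use Lemma~\ref{lem:relGor} to identify the relative dual with the Thom spectrum cochains, untwist it via the $b_KR$-orientation of $L(U,K)$ supplied by Lemma~\ref{lem:LUorientable} and Proposition~\ref{prop:LULGorientable}, and then transfer the relative Gorenstein property from $C^*(BU;R)\to C^*(BK;R)$ down to $C^*(BG;R)\to C^*(BH;R)$ via Lemma~\ref{lem:relGorcomposite}. The only difference is cosmetic: you spell out the untwisting of the Thom spectrum that the paper leaves implicit.
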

\begin{proof}
Choose an embedding $G \hookrightarrow U$ into a unitary group. 
If $LG$ is $b_GR$-orientable, then $L(U,G)$ is $b_GR$-orientable by Proposition~\ref{prop:LULGorientable}, and similarly for $L(U,H)$. 
Since $\pi_*R$ is even, $R$ admits a complex orientation and so Lemma~\ref{lem:relGor} tells us that $C^*(BU;R)\to C^*(BG;R)$ and $C^*(BU;R)\to C^*(BH;R)$ are relatively Gorenstein of shift $\dim(U/G)$ and $\dim(U/H)$ respectively. To conclude apply Lemma~\ref{lem:relGorcomposite}.
\end{proof}

\begin{rem}
One can also prove a version of the previous theorem when $LG$ and $LH$ are not orientable, see~\cite[6.1]{Greenlees2020} for details of the case when $R$ is an ordinary commutative ring. Since this generalization is not required for our goal of proving local Gorenstein duality for $C^*(BG;R)$ (and since the argument is the same as in the case where $R$ is an ordinary commutative ring) we have chosen to omit the details here.
\end{rem}

Before proving our main theorem we need another lemma. 

\begin{lem}\label{lem:compact}
Let $R$ be a complex orientable commutative noetherian ring spectrum with $\pi_*R$ 
of finite global dimension. Let $G$ be a compact Lie group and choose a faithful representation $G \hookrightarrow U$ into a unitary group. Then $C^*(BG;R)$ is a compact $C^*(BU;R)$-module. 
\end{lem}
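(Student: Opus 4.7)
The plan is to apply Lemma~\ref{lem-regular-compact}(b) to the $C^*(BU;R)$-module $C^*(BG;R)$. This reduces the problem to verifying two conditions: that $\pi_*C^*(BU;R)$ has finite global dimension, and that $\pi_*C^*(BG;R) = R^*(BG)$ is finitely generated over $\pi_*C^*(BU;R) = R^*(BU)$.

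The second condition is immediate from Venkov's theorem (Theorem~\ref{thm:venkov2}) applied to the inclusion $G \hookrightarrow U$, which is guaranteed to exist since $G \hookrightarrow U$ is faithful. Note that we are in a position to apply Venkov's theorem because $R$ is assumed to be complex orientable and commutative noetherian.

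For the first condition, by Lemma~\ref{lem:cx-orientable} we have the identification $\pi_*C^*(BU;R) = R^*\llbracket c_1,\ldots,c_n\rrbracket$ where $n$ is the rank of $U$ and each $c_i$ has non-zero even degree. The claim is then that for a graded noetherian ring $S$ of finite global dimension and a variable $x$ of non-zero even degree, the power series ring $S\llbracket x \rrbracket$ also has finite global dimension; more precisely, $\mathrm{gl.dim}(S\llbracket x \rrbracket) = \mathrm{gl.dim}(S) + 1$. An iterated application of this fact to the coefficient ring $R^* = \pi_*R$, which has finite global dimension by assumption, then yields that $R^*\llbracket c_1,\ldots,c_n\rrbracket$ has finite global dimension, as required.

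The main obstacle is verifying this claim about power series rings. The argument goes as follows: $x$ is a central regular element of $S\llbracket x \rrbracket$, and the quotient $S\llbracket x \rrbracket/(x) = S$ has finite global dimension. Given any finitely generated graded $S\llbracket x \rrbracket$-module $M$, one can construct a finite projective resolution by combining a finite projective resolution of $M/xM$ over $S$ (lifted to $S\llbracket x \rrbracket$) with the Koszul-type short exact sequence coming from multiplication by $x$. This is the standard argument that passing from $S$ to a power series ring in a central regular variable raises the global dimension by exactly one. With both conditions verified, Lemma~\ref{lem-regular-compact}(b) then shows $C^*(BG;R)$ is a compact $C^*(BU;R)$-module, completing the proof.
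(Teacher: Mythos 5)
Your proposal is correct and follows essentially the same route as the paper: the paper's proof also observes that $\pi_*C^*(BU;R)$ is a power series ring over $\pi_*R$ and hence of finite global dimension, and then combines Venkov's theorem (Theorem~\ref{thm:venkov2}) with Lemma~\ref{lem-regular-compact}(b). The only difference is that you spell out the standard change-of-rings argument for why adjoining a power series variable preserves finite global dimension, which the paper simply asserts.
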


\begin{proof}
Note that $\pi_*C^*(BU;R)$ is a finite power series ring on $\pi_*R$ and so it has 
finite global dimension. Now combine Lemma~\ref{lem-regular-compact} with Venkov's theorem.
\end{proof}

We are finally ready to prove our main result. 
\begin{thm}\label{thm:duality}
Let $G$ be a compact Lie group and $R$ be a commutative noetherian ring spectrum with $\pi_*R$ even and of finite global dimension. 
If $LG$ is $b_GR$-orientable, then $C^*(BG;R)$ has local Gorenstein duality. Moreover the shift function on maximal ideals $\m$ of $R^*(BG)$ takes the form
\[
\sigma(\m) = \dim(G) +\nu(\m \cap \pi_*R) - \dim(\pi_*R_{\m \cap \pi_*R})
\]
where $\nu$ denotes the Gorenstein shift function of the graded ring $\pi_*R$. 
If moreover $\pi_*R$ is equicodimensional, then the shift function simplifies even further to
\[
\sigma(\m) = \dim(G) +\nu(\m \cap \pi_*R) - \dim(\pi_*R).
\]
\end{thm}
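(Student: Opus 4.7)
The plan is to apply the ascent theorem for local Gorenstein duality (Theorem~\ref{thm:Gorascent}) to the restriction map $f \colon C^*(BU;R) \to C^*(BG;R)$ induced by a faithful embedding $i \colon G \hookrightarrow U$ into a unitary group $U = U(n)$. First I verify that $C^*(BU;R)$ itself satisfies local Gorenstein duality: since $\pi_*R$ is even, $R$ is complex orientable and Lemma~\ref{lem:cx-orientable} gives $\pi_*C^*(BU;R) = \pi_*R\llbracket c_1, \ldots, c_n\rrbracket$ with $|c_i|=-2i$; the finite global dimension hypothesis forces $\pi_*R$ to be graded regular and hence graded Gorenstein with some shift function $\nu$; Corollary~\ref{cor-poly-shift-gor} upgrades this to graded Gorensteinness of $\pi_*C^*(BU;R)$ with shift $\nu_{BU}(\m') = \nu(\m' \cap \pi_*R) + n(n+1)$ on maximal ideals; and Lemma~\ref{lem:alggor} translates this into local Gorenstein duality for $C^*(BU;R)$.

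The hypotheses of the ascent theorem are then checked as follows. Both $C^*(BU;R)$ and $C^*(BG;R)$ are commutative noetherian ring spectra by Corollary~\ref{cor:noetherian}, and $f$ is finite by Lemma~\ref{lem:compact}. By Lemma~\ref{lem:LUorientable} the adjoint representation $LU$ is $b_UR$-orientable, and $LG$ is $b_GR$-orientable by hypothesis; hence Theorem~\ref{thm:relGor2} applied to the inclusion $G \leq U$ shows that $f$ is relatively Gorenstein of shift $\dim(U/G) = n^2 - \dim(G)$. Theorem~\ref{thm:Gorascent} therefore produces local Gorenstein duality for $C^*(BG;R)$ with shift function $\sigma(\m) = \sigma_{BU}(\res_f \m) - (n^2 - \dim(G))$ on every prime.

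It remains to evaluate $\sigma_{BU}$ at maximal ideals. For a maximal ideal $\m$ of $R^*(BG)$, Lemma~\ref{lem:coheight} applied to the finite map $f$ gives $\mrm{coht}(\res_f \m) = 0$, so $\m' := \res_f(\m)$ is maximal in $\pi_*C^*(BU;R)$. As in the proof of Corollary~\ref{cor-poly-shift-gor}, every maximal ideal of the power series ring contains each $c_i$, so $\m_0 := \m' \cap \pi_*R = \m \cap \pi_*R$ is maximal in $\pi_*R$ and $\pi_*C^*(BU;R)_{\m'} \cong \pi_*R_{\m_0}\llbracket c_1,\ldots,c_n\rrbracket$ has dimension $\dim(\pi_*R_{\m_0}) + n$. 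Hence
\[
\sigma_{BU}(\m') = \nu(\m_0) + n(n+1) - \dim(\pi_*R_{\m_0}) - n = \nu(\m_0) + n^2 - \dim(\pi_*R_{\m_0}),
\]
and substituting into the ascent formula gives
\[
\sigma(\m) = \dim(G) + \nu(\m \cap \pi_*R) - \dim(\pi_*R_{\m \cap \pi_*R}),
\]
the two $n^2$ contributions from the Chern-class shift and from $\dim(U)$ cancelling precisely. The equicodimensional case follows immediately since $\dim(\pi_*R_{\m_0}) = \dim(\pi_*R)$ whenever $\m_0$ is maximal.

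The main delicate point of the argument is not any single step but rather the bookkeeping: one must confirm that the two $n$-dependent shifts—the contribution $n(n+1)$ coming from the power series ring structure and the contribution $n^2-\dim(G)$ arising from relative Gorensteinness along $G \leq U$—combine so that the choice of ambient unitary group $U$ disappears from the answer, leaving a formula intrinsic to $G$ and $R$.
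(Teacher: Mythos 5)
Your proposal is correct and follows essentially the same route as the paper's proof: embed $G$ in a unitary group $U(n)$, deduce local Gorenstein duality for $C^*(BU(n);R)$ from the algebraic Gorensteinness of the power series coefficient ring (Corollary~\ref{cor-poly-shift-gor} and Lemma~\ref{lem:alggor}), and apply the ascent theorem along the finite, relatively Gorenstein map $C^*(BU(n);R)\to C^*(BG;R)$, with the same bookkeeping of shifts via $\mrm{ht}(\res_f\m)=\dim(\pi_*R_{\m\cap\pi_*R})+n$ and preservation of coheight. The only cosmetic differences are that you invoke the statement of Theorem~\ref{thm:relGor2} (justified by Lemma~\ref{lem:LUorientable}) where the paper cites its proof, and you make explicit the step that finite global dimension forces $\pi_*R$ to be Gorenstein, which the paper leaves implicit.
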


\begin{proof}
Note that $\pi_*(C^*(BG;R)) = R^*(BG)$ which is noetherian by Corollary~\ref{cor:noetherian}.
Choose a faithful representation $G \hookrightarrow U(n)$ into a unitary group.
Since $\pi_*R$ is even, we may choose a complex orientation on $R$ and calculate 
$\pi_*(C^*(BU(n);R))=(\pi_*R)\llbracket c_1,\ldots, c_n\rrbracket$ with $|c_i|=-2i$.  
This is an algebraically Gorenstein ring by Corollary~\ref{cor-poly-shift-gor}.
Hence $C^*(BU(n);R)$ has local Gorenstein duality by Lemma~\ref{lem:alggor}. 
Consider the map $f\colon C^*(BU(n);R) \to C^*(BG;R)$. This is finite by Lemma~\ref{lem:compact} and relatively Gorenstein of shift $n^2-\dim(G)$ by the proof of Theorem~\ref{thm:relGor2}. Therefore by local Gorenstein ascent (Theorem~\ref{thm:Gorascent}) applied to $f$ we deduce that $C^*(BG;R)$ has local Gorenstein duality as claimed.

Let us now track the shifts. We saw that $\pi_*(C^*(BU(n);R))=(\pi_*R)\llbracket c_1,\ldots, c_n\rrbracket$ with $|c_i|=-2i$. This is an algebraically Gorenstein ring and the shift function on maximal ideals takes the form $\n \mapsto n(n+1)+\nu(\n \cap \pi_*R)$ by Corollary~\ref{cor-poly-shift-gor}.
Hence $C^*(BU(n);R)$ has local Gorenstein duality with shift function $\n \mapsto n(n+1)+\nu(\n \cap \pi_*R)-\mrm{ht}(\n)$ on maximal ideals by Lemma~\ref{lem:alggor}. If $\n$ is maximal in $(\pi_*R)\llbracket c_1,\ldots, c_n\rrbracket$ then $\n = (\n \cap \pi_*R, c_1, \ldots, c_n)$ as in the proof of Corollary~\ref{cor-poly-shift-gor} so
\[
\mrm{ht}(\n) = \dim((\pi_*R)\llbracket c_1,\ldots, c_n\rrbracket_\n) = \dim(\pi_*R_{\n \cap \pi_*R}) + n.
\]
 Thus the shift function can be rewritten as $\n \mapsto n^2+\nu(\n \cap \pi_*R)-\dim(\pi_*R_{\n \cap \pi_*R}).$ 
 
 Now let $\m$ be a maximal ideal in $R^*(BG)$. The commutative diagram 
\[
\begin{tikzcd}
R \arrow[d] \arrow[dr] & \\
C^*(BU(n);R)\arrow[r,"f"'] & C^*(BG;R)
\end{tikzcd}
\]
shows that $\nu(\m \cap \pi_*R) = \nu(\res_f(\m) \cap \pi_*R)$. Note that 
$\res_f(\m)$ is again maximal since $R^*(BU(n)) \to R^*(BG)$ is finite (Lemma~\ref{lem:venkov1}) and $\mrm{coht}(\res_f(\m)) = \mrm{coht}(\m)=0$ (Lemma~\ref{lem:coheight}).   
The local Gorenstein ascent theorem applied to the map $f$ then shows that $C^*(BG;R)$ has local Gorenstein duality with shift function on maximal ideals given by
\[
\sigma(\m) = \dim(G) + \nu(\m \cap \pi_*R) -\dim(\pi_*R_{\m \cap \pi_*R}).
\]
This concludes the proof of the theorem.
\end{proof}

In the case when $R$ is a field, we recover previous results~\cite[4.32]{BHV} and~\cite[12.1]{BGduality}.

\begin{cor}\label{cor-LGD-field}
 Let $G$ be a compact Lie group and $k$ be a field. If $LG$ is $b_Gk$-orientable, then 
 $C^*(BG;k)$ has local Gorenstein duality with shift function $\sigma(\p) = \dim(G) + \mrm{coht}(\p)$ for all $\p\in \Spech(H^*(BG;k))$. 
\end{cor}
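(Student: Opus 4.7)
The plan is to deduce this corollary directly from Theorem~\ref{thm:duality}, but to carry out the shift computation at all primes rather than only at maximal ideals. The field $k$ is trivially even and has global dimension zero, and $LG$ is $b_Gk$-orientable by hypothesis, so Theorem~\ref{thm:duality} immediately yields that $C^*(BG;k)$ satisfies local Gorenstein duality; only the formula for the shift function requires additional work.

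To track the shift at an arbitrary prime $\q \in \Spech(H^*(BG;k))$, I would unpack the proof of Theorem~\ref{thm:duality} and apply the local Gorenstein ascent theorem (Theorem~\ref{thm:Gorascent}) to the finite, relatively Gorenstein map $f\colon C^*(BU(n);k) \to C^*(BG;k)$ of shift $a = n^2 - \dim(G)$, where $G \hookrightarrow U(n)$ is a faithful representation. The homotopy ring $\pi_*C^*(BU(n);k) = k\llbracket c_1,\ldots,c_n\rrbracket$ is coconnected with $R_0 = k$, hence by Theorem~\ref{thm:localize} its graded Gorenstein shift function is constant, and Corollary~\ref{cor-poly-shift-gor} computes its value as $0 - \sum_{i=1}^n d_i = n(n+1)$ (using $d_i = -2i$). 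Lemma~\ref{lem:alggor} then gives the Gorenstein duality shift of $C^*(BU(n);k)$ at any prime $\p'$ as $n(n+1) - \mrm{ht}(\p')$.

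Applying ascent along $f$ gives
\[
\sigma(\q) = n(n+1) - \mrm{ht}(\res_f(\q)) - (n^2 - \dim(G)) = \dim(G) + n - \mrm{ht}(\res_f(\q)).
\]
To convert this into the claimed formula in terms of $\mrm{coht}(\q)$, I would invoke that $k\llbracket c_1,\ldots,c_n\rrbracket$ is a regular local ring of Krull dimension $n$, hence Cohen-Macaulay, so Lemma~\ref{lem:equi} gives $\mrm{ht}(\res_f(\q)) + \mrm{coht}(\res_f(\q)) = n$. Combined with Lemma~\ref{lem:coheight} applied to the finite map $f$ (which is finite by Lemma~\ref{lem:venkov1}), this yields $n - \mrm{ht}(\res_f(\q)) = \mrm{coht}(\res_f(\q)) = \mrm{coht}(\q)$, giving the desired identity $\sigma(\q) = \dim(G) + \mrm{coht}(\q)$.

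There is no real obstacle here; the argument is purely a matter of bookkeeping the shifts through ascent, and the only point requiring care is to recognize that the constancy of the shift function on $k\llbracket c_1,\ldots,c_n\rrbracket$ (Theorem~\ref{thm:localize}) together with the dimension formula for this local Cohen-Macaulay ring is what allows us to express the answer in the clean coheight form that matches the formulations in~\cite{BHV,BGduality}.
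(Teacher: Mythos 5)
Your proposal is correct and follows essentially the same route as the paper: ascent along the finite, relatively Gorenstein map $C^*(BU(n);k)\to C^*(BG;k)$, with the shift of $k\llbracket c_1,\ldots,c_n\rrbracket$ computed via Corollary~\ref{cor-poly-shift-gor} and Theorem~\ref{thm:localize}, Lemma~\ref{lem:alggor}, and the dimension formula converting heights to coheights. Your explicit appeal to Lemma~\ref{lem:coheight} to identify $\mrm{coht}(\res_f(\q))=\mrm{coht}(\q)$ is a point the paper leaves implicit, but it is the same argument.
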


\begin{proof}
 We have already seen in Theorem~\ref{thm:duality} that $C^*(BG;k)$ has local Gorenstein duality so let us calculate 
 the shift function. We choose a faithful representation $G \hookrightarrow U(n)$ into a unitary 
 group. Note that $\pi_*(C^*(BG;k))=k\llbracket c_1, \ldots, c_n \rrbracket$ which is 
 algebraically Gorenstein with shift $\q \mapsto n(n+1)$ by 
 Corollary~\ref{cor-poly-shift-gor} and Theorem~\ref{thm:localize}. 
 Hence $C^*(BU(n);k)$ has local Gorenstein duality of shift 
 $\q \mapsto n(n+1)-\mrm{ht}(\q)$ by Lemma~\ref{lem:alggor}. By applying the dimension 
 formula (Lemma~\ref{lem:equi}) to $k\llbracket c_1,\ldots,c_n \rrbracket$ we can simplify the shift to 
 $\q \mapsto n(n+1)-n+\mrm{coht}(\q)=n^2+\mrm{coht}(\q)$. 
 We then apply local Gorenstein ascent (Theorem~\ref{thm:Gorascent}) to the map 
 $f\colon C^*(BU(n);k)\to C^*(BG;k)$ which is finite by Lemma~\ref{lem:compact} and 
 relatively Gorenstein of shift $n^2 -\dim(G)$ by Theorem~\ref{thm:relGor2}. 
 Putting all this together we see that $C^*(BG;k)$ has shift function $\sigma(\p)=\dim(G)+\mrm{coht}(\p)$ as claimed.
\end{proof}

\section{Examples}\label{sec:examples}

We will now apply Theorem~\ref{thm:duality} to various complex oriented 
ring spectra $R$. In the following list of examples we fix a compact Lie group $G$ of dimension $g$ 
and assume that $LG$ is $b_GR$-orientable for the appropriate $R$ in the example. We recall that this always holds if $G$ is finite,
and refer the reader to Section~\ref{sec:orientations} for more examples.
In each of the following examples we only give the shift functions on maximal ideals unless otherwise stated.

\begin{ex}[Ordinary cohomology]
 If $R=k$ is a field, then we deduce that $C^*(BG;k)$ has local Gorenstein 
 duality with shift function $\m \mapsto g$. In fact in this case the shift function 
 is given on all prime ideals by $\p \mapsto g+\mrm{coht}(\p)$ by 
 Corollary~\ref{cor-LGD-field} recovering~\cite[4.32]{BHV} and~\cite[12.1]{BGduality} 
 (noting that the sign on $d$ is incorrect in the statement of~\cite[12.1]{BGduality}). 
 More generally we can consider any noetherian (ungraded) 
 ring $R$ of finite global dimension and deduce that $C^*(BG;R)$ has local Gorenstein 
 duality. In this case we can only determine the shift function on maximal ideals. 
 For instance, we can take $R=\Z$ and deduce that $C^*(BG;\Z)$ has local 
 Gorenstein duality of shift $\m \mapsto g-1$.
\end{ex}

\begin{ex}[Complex $K$-theory]\label{ex:ku-KU}\leavevmode
\begin{itemize}
\item[(i)] Let $R = ku$ be the connective complex $K$-theory spectrum so that 
 $\pi_*(ku) = \Z[v]$ where $v$ is the Bott element in degree 2. 
 This is algebraically Gorenstein of shift $-2$ by direct inspection or by 
 applying Corollary~\ref{cor-poly-shift-gor}. 
 Therefore $C^*(BG;ku)$ has local Gorenstein duality with shift 
 function $\m \mapsto g -4$. 
 \item[(ii)]
 We can also consider the periodic $K$-theory spectrum $R = KU$ where 
 $\pi_*(KU) = \Z[v,v^{-1}]$. The ring $\Z[v,v^{-1}]$ is Gorenstein of shift $0$ 
 by Proposition~\ref{prop:periodic} and therefore $C^*(BG;KU)$ has local Gorenstein 
 duality with shift function $\m \mapsto g-1$.
 \end{itemize}
\end{ex}

\begin{ex}[Lubin-Tate spectra]
  Let $k$ be a perfect field of characteristic $p$, and write $W(k)$ for the Witt 
  vectors. The ring $W(k)$ is a regular local ring of Krull dimension 1 (as it is a 
  discrete valuation domain). 
  The Lubin-Tate spectrum $E_n$ is a commutative noetherian ring spectrum 
  with $\pi_*(E_n) = W(k)\llbracket u_1,\ldots,u_{n-1}\rrbracket [\beta^{\pm 1}]$ where $|u_i|=0$ and $|\beta|=2$, 
  so it is algebraically Gorenstein of shift $0$ by Proposition~\ref{prop:periodic}.  
  Therefore $C^*(BG; E_n)$ has local Gorenstein duality of shift $\m \mapsto g -n$.
\end{ex}

\begin{ex}[Topological Hochschild homology]\label{ex:THH}
 Let $R=\THH(\mbb{F}_p)$ be the topological Hochschild homology spectrum of $\mbb{F}_p$. 
 B\"{o}kstedt showed that $\pi_*\THH(\mbb{F}_p)=\mbb{F}_p[x]$ where $|x|=2$. 
 This is an algebraically Gorenstein ring
 of shift $-2$ so the ring spectrum $C^*(BG; \THH(\mbb{F}_p))$ has local Gorenstein 
 duality of shift $\m \mapsto g-3.$
\end{ex}

\begin{ex}[Topological modular forms]\label{ex:tmf}
We can also consider cases where $R$ is some variant of topological modular forms. 
We note that $\mrm{tmf}$ has a commutative model by the Goerss-Hopkins-Miller theorem, 
and the versions with level structures have commutative models~\cite[\S 6]{HillLawson}, 
see also~\cite{LN1, LN2} in the case of $\mrm{tmf}_1(3)$. In the following examples we are 
implicitly working $p$-locally. 
\begin{itemize}
\item[(i)] If $p \geq 5$, we may take $R = \mrm{tmf}$ which is a commutative ring spectrum with $\pi_*(\mrm{tmf}) = \Z_{(p)}[c_4, c_6]$ where $|c_i| = 2i$. 
Therefore $\pi_*R$ has finite global dimension and has Gorenstein shift $ -20$. 
Thus $C^*(BG; \mrm{tmf})$ has local Gorenstein duality of shift $\m \mapsto g -23$.
\item[(ii)] If $p = 3$, we take $R = \mrm{tmf}_1(2)$ which is a commutative ring spectrum with $\pi_*(\mrm{tmf}_1(2)) = \Z_{(3)}[c_2, c_4]$ where $|c_i| = 2i$. 
This has finite global dimension and has Gorenstein shift $-12$. Therefore $C^*(BG; \mrm{tmf}_1(2))$ has Gorenstein duality of shift $\m \mapsto g -15$. 
\item[(iii)] If $p = 2$, we may take $R = \mrm{tmf}_1(3)$. This is a commutative ring spectrum with $\pi_*(\mrm{tmf}_1(3)) = \Z_{(2)}[c_1, c_3]$ where $|c_i| = 2i$. 
As before, this has finite global dimension and has Gorenstein shift $-8$. Thus $C^*(BG; \mrm{tmf}_1(3))$ has Gorenstein duality of shift $\m \mapsto g -11$.
\end{itemize}
\end{ex}

\begin{ex}[Topological modular forms again]
 Let $\mrm{TMF}(n)$ denote the periodic version of $\mrm{TMF}$ for elliptic curves over 
 $\Z[1/n]$-algebras with a full level $n$ structure. 
  \begin{itemize}
  \item[(i)] For $n=2$, the ring spectrum $\mrm{TMF}(2)$ is $4$-periodic and has
  noetherian coefficients given by
 \[
 \pi_*\mrm{TMF}(2) =\Z[1/2][s^{\pm 1}, (s-1)^{-1}][\alpha^{\pm 1}]
 \]
 where $|\alpha|=4$ and $|s|=0$, see~\cite[8.1]{MS2016}. 
 This is graded Gorenstein of shift $0$ 
 by Proposition~\ref{prop:periodic} and it has Krull dimension $2$ as it defines an 
 open in $\mrm{Spec}(\Z[1/2][s])$. Therefore $C^*(BG; \mrm{TMF}(2))$ has local 
 Gorenstein duality of shift $g-2$. 
 \item[(ii)] For $n=3$, the ring spectrum $\mrm{TMF}(3)$ is $2$-periodic with
 noetherian coefficients
 \[
 \pi_*\mrm{TMF}(3)=\Z[1/3,\zeta][t^{\pm 1}, (1-\zeta t)^{-1},(1+\zeta^2 t)^{-1}][\beta^{\pm 1}]
 \]
 where $\zeta$ is a primitive third root of unity and $|\beta|=2$, 
 see~\cite[8.2]{MS2016}. This is graded Gorenstein of shift $0$ by 
 Proposition~\ref{prop:periodic}, and it has dimension $2$ since it defines an open 
 in $\mrm{Spec}(\Z[1/3][\zeta,t])$. Therefore $C^*(BG; \mrm{TMF}(3))$ has 
 local Gorenstein duality of shift $g -2$. 
 \end{itemize}
\end{ex} 

\section{Descent for local Gorenstein duality}\label{sec:descent}
In this section we prove a descent result for local Gorenstein duality along relatively Gorenstein maps. We then illustrate this with some examples. We start by recalling 
the following key definition from~\cite{Balmerdescent, MathewGalois}.

\begin{defn}\label{def-descendable}
 We say that a map of commutative ring spectra $f\colon R \to S$ is \emph{descendable} 
 if the thick ideal of $\mod{R}$ generated by $S$ contains $R$; in symbols, 
 $R \in \mrm{Thick}_R^\otimes(S)$. 
 Note that we permit tensoring by arbitrary $R$-modules, rather than just the 
 compact ones. This is equivalent to the thick ideal of $\mod{R}$ generated by $S$ 
 being the whole of $\mod{R}$.
\end{defn}

In the case when the map $f\colon R \to S$ is finite, descendability reduces to a more familiar condition. Recall our use of the terminology build and finitely build from the 
conventions section. 

\begin{lem}\label{lem-finite+descendable}
Let $f\colon R \to S$ be a finite map of commutative ring spectra. Then $f$ is descendable if and only if $S$ builds $R$.
\end{lem}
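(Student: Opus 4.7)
The plan is to prove the two implications separately. The forward implication is purely formal and does not use finiteness, while the backward direction uses compactness of $S$ essentially, via a classical theorem of Neeman.

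For the forward implication, I would first show that the localizing subcategory $\mrm{Loc}(S)$ is automatically a localizing \emph{ideal} of $\mod{R}$, so that $\mrm{Loc}(S) = \mrm{Loc}^\otimes(S)$. Fix $N \in \mod{R}$. The full subcategory $\{M \in \mod{R} : S \otimes_R M \in \mrm{Loc}(S)\}$ is localizing and contains the unit $R$ (since $S \otimes_R R \simeq S$), so it equals all of $\mod{R}$. Hence $S \otimes_R N \in \mrm{Loc}(S)$ for every $N$. Then the full subcategory $\{M \in \mod{R} : M \otimes_R N \in \mrm{Loc}(S) \text{ for all } N\}$ is also localizing and contains $S$, so it contains all of $\mrm{Loc}(S)$. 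Therefore $\mrm{Thick}^\otimes(S) \subseteq \mrm{Loc}^\otimes(S) = \mrm{Loc}(S)$. If $f$ is descendable then $R \in \mrm{Thick}^\otimes(S) \subseteq \mrm{Loc}(S)$, which means exactly that $S$ builds $R$.

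For the backward direction, suppose $R \in \mrm{Loc}(S)$. Since $f$ is finite, $S$ is a compact $R$-module, and $R$ is trivially a compact object of $\mod{R}$. The category $\mod{R}$ is compactly generated by the unit $R$, so Neeman's theorem applies: in a compactly generated triangulated category, the compact objects of the localizing subcategory generated by a set of compact objects coincide with the thick subcategory those objects generate. Applied to $\{S\}$, we obtain $R \in \mrm{Thick}(S) \subseteq \mrm{Thick}^\otimes(S)$, so $f$ is descendable.

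The main conceptual point to highlight is that the argument relies on the coincidence $\mrm{Loc}(S) = \mrm{Loc}^\otimes(S)$, which holds automatically because the unit $R$ generates all of $\mod{R}$ under colimits, whereas the analogous collapse $\mrm{Thick}(S) = \mrm{Thick}^\otimes(S)$ genuinely fails in general. It is precisely the finiteness of $f$, via Neeman's theorem, that bridges this gap and yields the backward direction.
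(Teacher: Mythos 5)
Your proof is correct and follows essentially the same route as the paper: the forward direction via the identification $\mrm{Loc}(S) = \mrm{Loc}^\otimes(S)$ (which the paper uses without spelling out, as you do), and the backward direction via Thomason--Neeman localization using that $R$ and $S$ are compact $R$-modules.
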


\begin{proof}
If $S$ builds $R$, then $S$ must finitely build $R$ by Thomason's localization theorem~\cite[2.1.3]{Neeman} since $R$ and $S$ are compact $R$-modules. Conversely, if $f$ is 
descendable, then $R \in \mrm{Thick}_R^\otimes(S) \subset \mrm{Loc}_R^\otimes(S) = \mrm{Loc}_R(S)$ so $S$ builds $R$. 
\end{proof}

We will see some examples of descendable maps later on in this section. We refer the interested reader to~\cite{Mathewexamples} for a wider range of examples.

\subsection{The descent theorem} 
In order to descend local Gorenstein duality along a finite map $f\colon R \to S$ of commutative noetherian ring spectra we require the shift function of $S$ to be of a certain form as we now describe. Given $\p \in \Spech(\pi_*R)$, we let $\pb$ denote a prime ideal of $\pi_*S$ such that $\res_f(\pb) = \p$. Since $f$ is finite the lying over theorem ensures that such a prime does exist, but in general there will be many choices of such a prime. We therefore require the shift function of $S$ to be locally constant in the sense that each such choice of $\pb$ has the same shift. 
\begin{defn}\label{def-locally-constant}
Let $f\colon R \to S$ be a finite map of commutative noetherian ring spectra in which $S$ has local Gorenstein duality with shift function $\nu$. We say that the shift function $\nu$ of $S$ is \emph{locally constant relative to $f$} if for all $\p \in \Spech(\pi_*R)$ and $\q, \q' \in U = \mrm{res}_f^{-1}(\p)$ we have $\nu(\q) = \nu(\q')$.
\end{defn}

We now give our descent result. 
\begin{thm}\label{thm:descent}
Let $f\colon R \to S$ be a map of commutative noetherian ring spectra such that: 
\begin{itemize}
\item[(a)] $f\colon R \to S$ is relatively Gorenstein of shift $a$;
\item[(b)] $f$ is finite and descendable;
\item[(c)] $S$ has local Gorenstein duality with shift function $\nu$ locally constant relative to $f$.
\end{itemize}
Then $R$ has local Gorenstein duality with shift function $\p \mapsto \nu(\pb) + a$.
\end{thm}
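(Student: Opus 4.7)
\emph{Plan.} Fix $\p \in \Spech(\pi_*R)$ and let $U = \res_f^{-1}(\p)$; since $f$ is finite, $U$ is a finite set. Choose any $\pb \in U$; by the local constancy hypothesis, $\nu(\q) = \nu(\pb)$ for every $\q \in U$. The task is to construct an equivalence of $R$-modules
\[
\Gamma_\p R_\p \simeq \Sigma^{\nu(\pb)+a}\,\I_\p.
\]

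My first step is to establish this equivalence after applying the base change functor $f_* = S \otimes_R -$. Repeating the support decompositions used in the proof of Theorem~\ref{thm:Gorascent}, one obtains canonical equivalences of $S$-modules
\[
f_*(\Gamma_\p R_\p) \simeq \bigoplus_{\q \in U} \Gamma_\q S_\q \qquad \text{and} \qquad f_!(\I_\p) \simeq \bigoplus_{\q \in U} \I_\q.
\]
Combining local Gorenstein duality for $S$ with local constancy rewrites the first decomposition as $\Sigma^{\nu(\pb)}\bigoplus_{\q \in U}\I_\q$, while Lemma~\ref{lem:ambi} (applicable since $f$ is finite and relatively Gorenstein of shift $a$) replaces $f_!\I_\p$ by $\Sigma^a f_*\I_\p$. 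Chaining these gives an equivalence of $S$-modules
\[
f_*(\Gamma_\p R_\p) \simeq \Sigma^{\nu(\pb)+a}\, f_*(\I_\p).
\]

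The remaining and main technical step is to descend this $S$-level equivalence to an $R$-module equivalence. Descendability of $f$ asserts that $R \in \mrm{Thick}_R^\otimes(S)$, equivalently that every $R$-module $M$ is recovered as the totalization of the Amitsur cosimplicial object $M \otimes_R S^{\otimes \bullet+1}$. Each ingredient of the construction above---the support decomposition, the local Gorenstein equivalence for $S$, and the natural identification $f_! \simeq \Sigma^a f_*$ from Lemma~\ref{lem:ambi}---is natural in its input, and the tensor powers $S^{\otimes n}$ inherit the relevant finiteness and relative Gorenstein data from $f$. My plan is therefore to upgrade the base-level equivalence to a coherent equivalence of cosimplicial $R$-modules $\Gamma_\p R_\p \otimes_R S^{\otimes \bullet+1} \simeq \Sigma^{\nu(\pb)+a}\,\I_\p \otimes_R S^{\otimes \bullet+1}$ and then to conclude by totalization. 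The delicate point, which I expect to require the most care, is checking coherence at every cosimplicial degree so that the totalization step is legitimate; equivalently, one may phrase the argument through comonadicity of the adjunction $f_* \dashv f^*$ valid for descendable $f$, promoting the base equivalence to an equivalence of comodules and descending from there.
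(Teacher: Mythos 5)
Your first half is fine and matches the paper: the decompositions $f_*(\Gamma_\p R_\p)\simeq\bigoplus_{\q\in U}\Gamma_\q S_\q$ and $f_!(\I_\p)\simeq\bigoplus_{\q\in U}\I_\q$, local constancy, and Lemma~\ref{lem:ambi} do combine to give $f_*(\Gamma_\p R_\p)\simeq\Sigma^{\nu(\pb)+a}f_*(\I_\p)$ (the paper records the same equivalence in its $f_!$ form). The problem is that the proof actually lives in the step you leave as a ``plan''. Knowing $f_*M\simeq f_*N$ does not give $M\simeq N$, even for faithful or descendable $f$, unless the equivalence is promoted to one compatible with descent data, and your proposed route for doing this is exactly where the argument would get stuck: the equivalence you want to descend is assembled from the non-canonical, prime-by-prime Gorenstein equivalences $\Gamma_\q S_\q\simeq\Sigma^{\nu(\q)}\I_\q$ together with abstract direct-sum decompositions, and nothing in their construction supplies compatibility with the comodule/cosimplicial structure on $f_*(\Gamma_\p R_\p)$ and $f_*(\I_\p)$. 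So ``checking coherence at every cosimplicial degree'' is not a routine verification but an unresolved obstruction, and as written the proposal does not prove the theorem.

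The paper sidesteps this coherence problem entirely, and that is the missing idea. From $f_!\Gamma_\p R_\p\simeq\Sigma^{\nu(\pb)+a}f_!\I_\p$ and the $f^*\dashv f_!$ adjunction one gets an equivalence
\[
\Hom_R(N,\Gamma_\p R_\p)\simeq\Hom_R(N,\Sigma^{\nu(\pb)+a}\I_\p),
\]
natural in the $S$-module $N$. Then one invokes Mathew's result that for descendable $f$ the unit $R\to\lim\bigl(S\otimes_R S^{\otimes_R\bullet}\bigr)$ exhibits $R$ as a \emph{universal} limit, preserved by any exact functor. Applying the two exact functors $\Hom_R(-,\Gamma_\p R_\p)$ and $\Hom_R(-,\Sigma^{\nu(\pb)+a}\I_\p)$ to this presentation of $R$ writes $\Gamma_\p R_\p$ and $\Sigma^{\nu(\pb)+a}\I_\p$ as colimits of diagrams whose terms are levelwise identified by the displayed natural equivalence (each $S^{\otimes_R n}$ is an $S$-module), and the equivalence of colimits follows. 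The point is that only a natural equivalence of corepresented functors on $S$-modules is needed, never a descent-datum-compatible equivalence of the base-changed objects themselves; your comonadicity/cosimplicial-coherence strategy demands strictly more than the paper ever constructs.
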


%

\begin{proof}
 Fix $\p \in \Spech(\pi_*R)$, and put $U=\res_f^{-1}(\p)$ as before. 
 Since $f$ is relatively Gorenstein,  the extension of scalars functor $f_*$ preserves both products 
 and coproducts by Lemma~\ref{lem:ambi}. 
 Thus we can apply~\cite[7.10]{BIK2} and get a decomposition 
 $f_*\Gamma_\p R_\p \simeq \bigoplus_{\q \in U}\Gamma_\q S_\q$.  
 Using~\cite[4.9, Proof of 4.27]{BHV}, we also see that 
 $f_! \I_\p \simeq \bigoplus_{\q\in U}\I_\q$. 
 Since $\nu$ is locally constant relative to $f$, we know that $\nu(\q) = \nu(\q')$ 
 for all $\q, \q' \in U$. 

 Combining the above observations, we have the series of equivalences
 \[
 f_! \Gamma_\p R_\p \simeq \Sigma^a f_*\Gamma_\p R_\p \simeq \Sigma^a\bigoplus_{\q \in U} \Gamma_\q S_\q \simeq \Sigma^a \bigoplus_{\q \in U}\Sigma^{\nu(\q)}\I_\q \simeq \Sigma^{\nu(\q)+a} f_!\I_\p 
 \]
 where the first equivalence follows from Lemma~\ref{lem:ambi}. Using this equivalence and adjunction, for any $S$-module $N$ we obtain an equivalence of $R$-modules
 \begin{equation}\label{eq:coext}\Hom_R(N, \Gamma_\p R_\p) \simeq \Hom_S(N, f_!\Gamma_\p R_\p) \simeq \Hom_S(N, \Sigma^{\nu(\q)+a}f_!\I_\p) \simeq \Hom_R(N, \Sigma^{\nu(\q)+a} \I_\p)\end{equation}
which is natural in $N$. 

By~\cite[Proposition 2.12]{Mathewexamples}, for any $M \in \mrm{Thick}_R^\otimes(S)$, we have a natural equivalence \[M \xrightarrow{\simeq} \mrm{lim}\left(M \otimes_R S \doublearrows M \otimes_R S
\otimes_R S \triplearrows \cdots \right)\] exhibiting $M$ as a \emph{universal} limit. As this limit is universal, it remains a limit after applying any exact functor. As $f$ is descendable, we in particular have this for $M = R$. 

We apply the exact functors $\Hom_R(-,\Gamma_\p R_\p)\colon \mod{R} \to \mod{R}^\mathrm{op}$ and $\Hom_R(-, \Sigma^{\nu(\q)+a}\I_\p)\colon \mod{R} \to \mod{R}^\mrm{op}$ to the presentation of $R$ as a universal limit, to obtain equivalences \[\Gamma_\p R_\p \simeq \mrm{colim}\left(\Hom_R(S, \Gamma_\p R_\p) \leftdoublearrows \Hom_R(S \otimes_R S, \Gamma_\p R_\p) \lefttriplearrows \cdots \right)\] and \[\Sigma^{\nu(\q)+a}\I_\p \simeq \mrm{colim}\left(\Hom_R(S, \Sigma^{\nu(\q)+a}\I_\p) \leftdoublearrows \Hom_R(S \otimes_R S, \Sigma^{\nu(\q)+a}\I_\p) \lefttriplearrows \cdots \right).\]
Now (\ref{eq:coext}) yields a natural equivalence between the two colimit systems, and hence between the colimits. Therefore we obtain an equivalence $\Gamma_\p R_\p \simeq \Sigma^{\nu(\q)+a}\I_\p$ as required.
\end{proof}

\begin{ex}[Real $K$-theory]\leavevmode
\begin{itemize}
\item[(i)] By Wood's theorem, the complexification map $f\colon KO \to KU$ is finite and descendable~\cite[3.17]{Mathewexamples}, and is relatively Gorenstein of shift $-2$~\cite[19.1]{Greenlees2018}. Since $KU$ is algebraically Gorenstein of shift 0 by Proposition~\ref{prop:periodic}, it follows from Lemma~\ref{lem:alggor} that $KU$ has local Gorenstein duality with shift function $\nu(\q)=-\mrm{ht}(\q)=\mrm{coht}(\q)-1$. This is locally constant relative to $f$ by Lemma~\ref{lem:coheight} and therefore $KO$ has local Gorenstein duality with shift function $\p \mapsto \mrm{coht}(\p)-3$. 
\item[(ii)] One can apply the descent theorem to the connective version $f\colon ko \to ku$ as well. The map $f$ is again finite and descendable by Wood's theorem so we only need to check that the shift function is locally constant relative to $f$. In this case $\pi_*ku=\Z[v]$ with $|v|=2$ which is algebraically Gorenstein with shift function given by 
\[
\nu(\q)=\begin{cases}
-2 & \mathrm{if}\;v\in \q \\
0 & \mathrm{if}\; v \not \in \q.
\end{cases}
\] 
Therefore $ku$ has local Gorenstein duality with shift function $\nu(\q)-\mrm{ht}(\q) =\nu(\q)+ \mrm{coht}(\q)-2$ by the dimension formula (Proposition~\ref{prop:dimformula}). Since coheights are preserved under 
finite maps by Lemma~\ref{lem:coheight}, we only need to check that $\nu(\q)$ is locally constant relative to $f$. Observe that there exists an element $\beta\in ko_8$ such that $f(\beta)=v^4$. Therefore 
if $\q,\q' \in \Spech(\pi_*ku)$ are such that $f^{-1}(\q')=f^{-1}(\q)$ then either  
$v^4\in \q \cap \q'$ or $v^4 \in \q^c \cap (\q')^c$. In the first case, by the prime condition we see that 
$v$ is in both $\q$ and $\q'$ and therefore $\nu(\q) = \nu(\q')=-2$. In the second case, we claim that $v$ is in neither $\q$ nor $\q'$. A priori, we might have $v$ lying in one of the primes but not the other, but then $\beta \in f^{-1}(\q) = f^{-1}(\q')$ and therefore $v^4$ and hence $v$ are in both $\q$ and $\q'$ which is a contradiction. This proves the claim, and therefore in the second case we have 
$\nu(\q)=\nu(\q')=0$. Thus $\nu(\q)$ is locally constant relative to $f$.  
It follows that $ko$ has local Gorenstein duality with shift function $\p \mapsto \nu(\pb)-4+\mrm{coht}(\p)$.
\end{itemize}
\end{ex} 

\subsection{Towards descent for cochain spectra}
In the section, we make some remarks about the application of the local Gorenstein descent theorem to cochain spectra. Given a map $f\colon R \to S$ of commutative ring spectra we obtain an induced map $\theta\colon C^*(BG;R) \to C^*(BG;S)$ which one may attempt to apply descent to. 

\begin{lem}\label{lem:cochaindescendable}
Let $f\colon R \to S$ be a finite, descendable map of commutative ring spectra. Then the induced map $\theta\colon C^*(BG;R) \to C^*(BG;S)$ is a finite, descendable map of commutative ring spectra.
\end{lem}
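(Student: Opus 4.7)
The approach is to identify $\theta$ with the base change of $f$ along the canonical ring map $R \to C^*(BG;R)$ (induced by $BG_+ \to S^0$), and then to deduce both properties formally from the standard behavior of base change.

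First, I would establish a natural equivalence of commutative $C^*(BG;R)$-algebras
\[
C^*(BG;R) \otimes_R S \xrightarrow{\simeq} C^*(BG;S)
\]
under which $\theta$ is identified with the base change unit. The two functors $T \mapsto \Hom(BG_+, R) \otimes_R T$ and $T \mapsto \Hom(BG_+, T)$ of $T \in \mod{R}$ are both exact, and the evident comparison map between them is an equivalence at $T = R$. Hence the full subcategory of $\mod{R}$ on which the comparison is an equivalence is thick and contains $R$. Since $f$ is finite, $S$ is compact over $R$ and so lies in $\mrm{Thick}_R(R)$, and the equivalence at $T = S$ follows.

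Second, I would deduce both conclusions from the fact that the base change functor
\[
F = C^*(BG;R) \otimes_R -\colon \mod{R} \to \mod{C^*(BG;R)}
\]
is exact and symmetric monoidal. For finiteness, exactness of $F$ implies $F(\mrm{Thick}_R(R)) \subseteq \mrm{Thick}_{C^*(BG;R)}(C^*(BG;R))$, so applying this to $S \in \mrm{Thick}_R(R)$ shows $C^*(BG;S) \simeq F(S)$ is a compact $C^*(BG;R)$-module. For descendability, the symmetric monoidal structure on $F$ together with exactness implies that the full subcategory $\{M \in \mod{R} : F(M) \in \mrm{Thick}_{C^*(BG;R)}^\otimes(C^*(BG;S))\}$ is a thick tensor ideal of $\mod{R}$ containing $S$, hence contains $\mrm{Thick}_R^\otimes(S)$. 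Since $f$ is descendable, $R \in \mrm{Thick}_R^\otimes(S)$, and thus $C^*(BG;R) = F(R) \in \mrm{Thick}_{C^*(BG;R)}^\otimes(C^*(BG;S))$ as required.

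The only point of substance is the first step, which genuinely uses that $S$ is compact as an $R$-module in order to identify $C^*(BG;S)$ with the base change $C^*(BG;R) \otimes_R S$. After that identification, preservation of thickness and of tensor ideals is a formal consequence of $F$ being an exact symmetric monoidal functor, so I do not expect any further obstacle.
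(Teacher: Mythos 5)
Your proof is correct, and its skeleton matches the paper's: both arguments hinge on the base-change equivalence $C^*(BG;R)\otimes_R S \simeq C^*(BG;S)$ (the paper asserts this from finiteness of $f$; you supply the standard justification, namely that the comparison map between the two exact functors is an equivalence on $\mrm{Thick}_R(R)$, which contains $S$ by compactness), and both then transport the finiteness statement through the exact functor $C^*(BG;R)\otimes_R -$ exactly as you do. The one genuine divergence is in the descendability step. The paper first converts ``finite and descendable'' into ``$S$ builds $R$'' via Lemma~\ref{lem-finite+descendable}, applies the exact, coproduct-preserving base-change functor to conclude that $C^*(BG;S)$ builds $C^*(BG;R)$, and then invokes that lemma a second time (using finiteness of $\theta$) to recover descendability. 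You instead prove directly that descendability is preserved under the symmetric monoidal base change: the subcategory $\{M : F(M)\in \mrm{Thick}^\otimes_{C^*(BG;R)}(C^*(BG;S))\}$ is a thick tensor ideal containing $S$, hence contains $R$. This avoids Lemma~\ref{lem-finite+descendable} entirely, does not use finiteness in the descent half, and in fact shows that descendability is preserved under arbitrary base change; the paper's route is shorter only because that lemma is already available. Both arguments are complete and correct.
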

\begin{proof}
 Firstly, note that since $f$ is finite 
 we have a natural equivalence
 \[
 C^*(BG;R)\otimes_R S \simeq C^*(BG;S).
 \] 
 By assumption $R$ finitely builds $S$, and applying the exact functor $C^*(BG;R) \otimes_R -$ and the above equivalence shows that $C^*(BG;R)$ finitely builds $C^*(BG;S)$. Therefore $\theta$ is finite. 
 
 Since $S$ builds $R$, applying the exact coproduct-preserving functor $C^*(BG;R) \otimes_R -$ and the equivalence in the previous paragraph shows that 
 $C^*(BG;S)$ builds $C^*(BG;R)$.  
 Therefore $\theta$ is descendable by Lemma~\ref{lem-finite+descendable}.
 \end{proof}
 
In light of the previous lemma, in order to apply the descent theorem to $\theta\colon C^*(BG;R) \to C^*(BG;S)$, it remains to check that the shift function of $C^*(BG;S)$ is locally constant relative to $\theta$. Since we do not have a closed form for the shift function of $C^*(BG;S)$ on prime ideals, we currently do not know how to verify this condition.

\bibliographystyle{plain}
\bibliography{reference}

\end{document}